\numberwithin{equation}{section}
\theoremstyle{plain}
\newtheorem{theorem}{Theorem}[section]
\newtheorem{lem}[theorem]{Lemma}
\newtheorem{proposition}[theorem]{Proposition}
\theoremstyle{definition}
\theoremstyle{remark}
\newtheorem{rmk}[theorem]{Remark}
\newcommand{\p}{\partial}
\newcommand{\eps}{\varepsilon}
\newcommand{\N}{\mathbb{N}}
\newcommand{\R}{\mathbb{R}}
\renewcommand{\S}{\mathbb{S}}
\newcommand{\T}{\mathbb T}
\newcommand{\Z}{\mathbb{Z}}
\newcommand{\EE}{{\mathcal E}}
\newcommand{\DD}{{\mathcal D}}
\newcommand{\HH}{{\mathcal H}}
\newcommand{\Gam}{\Gamma}
\newcommand{\dd}{\, {\rm d}}
\DeclareMathOperator*{\dist}{dist}
\DeclareMathOperator*{\argmin}{{\rm argmin}}
\DeclareMathOperator*{\Int}{\int}
\renewcommand{\d}[2][0]{\,{\,\rm d^{\IfStrEqCase{#2}{{1}{}}[#2]}}{\IfStrEqCase{#1}{{0}{x}}[#1]}}
\newcommand{\ds}[1]   {\d[s]{#1}}
\newcommand{\dphi}[1]   {\d[\phi]{#1}}
\newcommand{\drho}[1]   {\d[\rho]{#1}}
\newcommand{\dx}[1]   {\d[x]{#1}}
\newcommand{\dy}[1]   {\d[y]{#1}}
\newcommand{\dz}[1]   {\d[z]{#1}}
\newcommand{\lupref}[2]{\hspace{0ex} \stackrel{\eqref{#1}}{#2}} 
\newcommand{\lupupref}[3]{\hspace{0ex}\stackrel{\eqref{#1},\eqref{#2}}{#3}}
\def\XXint#1#2#3{{\setbox0=\hbox{$#1{#2#3}{\int}$}
        \vcenter{\hbox{$#2#3$}}\kern-.5\wd0}}
\title{Magnetic domains in thin ferromagnetic films with strong perpendicular anisotropy}
\author[*]{\rm Hans Knüpfer} \author[$\dag$]{\rm Cyrill B. Muratov}
\author[*]{\rm Florian Nolte} \affil[*]{Institut für Angewandte
  Mathematik and Interdisciplinary Center for Scientific Computing
  (IWR), Universität Heidelberg, 69120 Heidelberg, Germany}
\affil[$\dag$]{Department of Mathematical Sciences and Center for
  Applied Mathematics and Statistics, New Jersey Institute of
  Technology, Newark, NJ 07102, USA}
\begin{document}

\maketitle
\begin{abstract}
  We investigate the scaling of the ground state energy and optimal
  domain patterns in thin ferromagnetic films with strong uniaxial
  anisotropy and the easy axis perpendicular to the film
  plane. Starting from the full three-dimensional micromagnetic model,
  we identify the critical scaling where the transition from single
  domain to multidomain ground states such as bubble or maze patterns
  occurs. Furthermore, we analyze the asymptotic behavior of the
  energy in two regimes separated by a transition. In the single
  domain regime, the energy $\Gamma$-converges towards a much simpler
  two-dimensional and local model. In the second regime, we derive the
  scaling of the minimal energy and deduce a scaling law for the
  typical domain size.
\end{abstract}

\newpage

\tableofcontents

\section{Introduction}
\label{sec:introduction}

Ferromagnetic materials are an important class of solids which have
played an indispensable role in data storage technologies of the
digital age \cite{moser02,eleftheriou10,stamps14}. Their utility for
technological applications stems from the basic physical property of
ferromagnets to exhibit spatially ordered magnetization patterns --
magnetic domains -- under a variety of conditions
\cite{HS-1998-magnetic_domains}. The mechanisms behind the magnetic
domain formation can be quite complex, but usually domain patterns may
be understood from the energetic considerations based on the
micromagnetic modeling framework \cite{HS-1998-magnetic_domains,brown,
  DKMO-2006-micromagnetics_review}. Starting with the early works of
Landau and Lifshitz \cite{landau1935theory} and Kittel
\cite{kittel46}, ground states of various ferromagnetic systems have
been the subject of extensive studies in the physics community (see
\cite{HS-1998-magnetic_domains} and references therein), and more
recently in the mathematical literature (for a review, see
\cite{DKMO-2006-micromagnetics_review}). In particular, within the
micromagnetic framework the ground state domain structure of
macroscopically thick uniaxial ferromagnetic films is by now fairly
well understood mathematically in terms of the energy and length
scales, as well as some of the qualitative properties of the domains
\cite{CK-1998-2D, CKO-1999-3D, conti00, KM-2011-branching, otto10}. In
contrast, apart from only a handful of studies
\cite{gioia1997micromagnetics, condette, melcher14, ms:prcla17}, the
vast majority of mathematical treatments of microscopically thin
ferromagnetic films deal with the situation in which the magnetization
prefers to lie in the film plane (see, e.g., \cite{GC-1999-magnetic,
  Carbou, DKMO, Moser, KohnSlastikov, DKO, Kurzke, ignat08,
  ignat2010vortex, ignat12, cm:non13}; this list is certainly not
complete). Thus, one of the fundamental open problems in the theory of
uniaxial ferromagnets is to rigorously characterize their ground
states in the case of films of vanishing thickness when the
magnetization prefers to align normally to the film plane (for various
ansatz-based computations in the physics literature, see \cite{kooy60,
  druyvesteyn71, Kaplan1993111, ng95}). This problem is the main
subject of the present paper.

Recent advances in nanofabrication allow an unprecedented degree of
spatial resolution, with features of only a few atomic layers in
thickness and tens of nanometers laterally for planar structures
\cite{stepanova}, enabling synthesis of ultrathin ferromagnetic films
and multilayer structures with novel material properties. Over the
last decade, there has been a major focus on films with thickness of
only a few atomic layers, primarily due to their promising
applications in spintronics \cite{bader10}. One of the important
features of these films is the emergence of perpendicular
magnetocrystalline anisotropy due to the increased importance of
surface effects \cite{heinrich93,ikeda10}, favoring the magnetization
vector to lie along the normal to the film plane. As a result, the
magnetization may exhibit either stripe or bubble domain phases
depending on the applied external field and other factors
\cite{speckmann95, saratz10, huang00, yamanouchi11, navas14}. We note
that studies of magnetic bubble domains in relatively thick films have
a long history in the context of magnetic memory devices (see, e.g.,
\cite{kooy60} and the book \cite{malozemoff}). However, the occurrence
of additional physical effects in ultrathin films, such as spin
transfer torque \cite{brataas12,fert13,jiang15}, Dzyaloshinskii-Moriya
interaction \cite{bogdanov94,rohart13} and electric field-controlled
perpendicular magnetic anisotropy \cite{endo10, matsukura15} allow for
much greater manipulation of the domain patterns, resulting in a
renewed attention to bubble domains from experimentalists
\cite{jiang15, kronseder2015real, woo16, schott16,
  2016arXiv160606034S}. In particular, the topological characteristics
of the bubble domain patterns in these materials are of great current
interest \cite{braun12, fert13, nagaosa13}. These considerations
further motivate the present study of the basic problem noted at the
end of the preceding paragraph. 

In this paper, we are interested in deriving a reduced two-dimensional
model for ultrathin ferromagnetic films with perpendicular anisotropy
and using it to asymptotically characterize the observed ground states
and, more generally, all low energy states in films of large spatial
extent. Our starting point is the three-dimensional micromagnetic
energy functional, coming from the continuum theory of uniaxial bulk
ferromagnets \cite{landau8}.  In a partially non-dimensionalized form,
the micromagnetic energy is given by
\begin{align} \label{eq:EE}
	\begin{aligned}
          \mathcal E[m] & = \int_{\Omega} \left( l_{\rm ex}^2|\nabla
            m|^2 + Q (m_1^2 + m_2^2) - 2 h_\mathrm{ext} \cdot m
          \right) \d3 + \int_{\R^3} \left|h\right|^2 \d3.
	\end{aligned}
\end{align}
In \eqref{eq:EE}, $\Omega \subset \R^3$ denotes the region in space
occupied by the ferromagnet and $\EE$ is minimized among all
$m \in H^1(\Omega;\S^2)$. The stray field $h:\R^3 \to \R^3$ is
determined by the static Maxwell's equations in matter
\begin{align}
    \nabla \cdot (h + m) = 0 && \text{and}&& \nabla \times h = 0,
\end{align}
so that the energy density depends in a nonlocal way on
$m$. Furthermore, $h_{ext}: \R^3 \to \R^3$ describes an external
magnetic field. The exchange length $l_{\rm ex}$ and the
non-dimensional quality factor $Q$ are material parameters. For an
introduction to micromagnetic modeling we refer to
e.g. \cite{HS-1998-magnetic_domains,DKMO-2006-micromagnetics_review}.
Note that additional physical effects due to the film surfaces may be
easily incorporated and would lead to the same type of a reduced
two-dimensional model \cite{ms:prcla17}.

Since our focus is on materials with perpendicular anisotropy, we
assume that the parameter $Q$ in \eqref{eq:EE} is greater than one
(for a detailed explanation, see the following section). The high
anisotropy leads to magnetizations that are predominantly
perpendicular to the film plane. It is well-known that such materials
feature magnetizations that consist of one or many regions of nearly
constant magnetization, called \textit{magnetic domains}, separated by
interfaces, called \textit{domain walls}. In this work, we identify
the critical scaling for the size of the sample where a transition
from single domain states to multidomain states occurs. Moreover, we
analyze the asymptotic behavior of the energy in the two regimes
separated by this transition. In the subcritical regime, the global
minimizers are the single domain states $m=\pm {e_3}$. We derive the
asymptotic behavior of the energy in this regime in the framework of
$\Gamma$-convergence. The reduced energy turns out to be much simpler
than the full energy, in particular, it is two-dimensional and local.
In the supercritical regime, which lies beyond the transition towards
multidomain configurations, we establish the scaling of the energy (up
to a multiplicative constant) and characterize sequences that achieve
this scaling. Our analysis shows that the magnetization in this regime
consists of several domains and suggests that the typical distance
between domain walls scales as
\begin{align}
  \text{typical domain size }S \sim
  \frac{e^{\frac{2\pi l_{\rm ex} \sqrt{Q-1}}{T}}}{\sqrt{Q-1}}l_{\rm ex} 
\end{align}
where $T$ is the thickness of the film.

We will show that in the regimes we consider the leading order of the
micromagnetic energy, upon rescaling and subtracting a constant, is
given by the following two-dimensional functional defined for
$m \in H^1(\T^2; \S^2)$:
\begin{align}\label{eq:F_intro}
  F_{\eps, \lambda}[m] =
  \int_{\T^2} \left(\frac{\eps}2 |\nabla m|^2 + \frac1{2\eps}
  (1-m_3^2)\right)\d2- \frac{\lambda}{|\log
  \eps|}\int_{\T^2}|\nabla^{1/2} m_3|^2 \d2. 
\end{align}
In \eqref{eq:F_intro}, $\T^2 = \R^2/\Z^2$ denotes the square flat
torus of unit side length, and we have assumed periodicity to avoid
boundary effects for simplicity (see also the next section), $\eps$ is
the renormalized Bloch wall width and $\lambda$ is the renormalized
film thickness (see the following section for the precise
definitions). We note that a similar result for a closely related
  problem of a Ginzburg-Landau energy with dipolar interactions has been
  obtained in \cite{m17}, where the meaning of the asymptotic
  equivalence between the full energy of three-dimensional
  configurations and the reduced energy of their $e_3$-averages is
  discussed in more detail.

The main part of our analysis is concerned with the
asymptotic behavior of \eqref{eq:F_intro} as $\eps \to 0$ for
different values of $\lambda >0$. Note that the last term in
\eqref{eq:F_intro} occurs with a negative sign and hence prefers
oscillations of $m_3$. As it turns out, the value of the parameter
$\lambda$ is crucial - in fact, we will show that the asymptotic
behavior changes at $\lambda = \lambda_{c}$, where
$\lambda_c = \frac{\pi}{2}$, which is a singular point in the
terminology of \cite{BraidesTruskinovsky2008}. For
$\lambda < \lambda_{c}$ the $\Gam$-limit
$F_{*,\lambda} := \Gam(L^1)$-$\lim_{\eps \to 0}F_{\eps, \lambda}$
measures the length of the interface separating regions with
$m \approx \rm e_3$ and $m \approx -e_3$ (see Theorem
\ref{th:SUBcrit-F})
\begin{align}\label{eq:F_gam}
    F_{*,\lambda}[m] =
    \begin{cases}
      \left(1-\frac{\lambda}{\lambda_{c}}\right)\int_{\T^2}|\nabla
      m_3| \d2, & \text{for } m \in BV(\T^2;\{\pm e_3\}),\\
      + \infty,& \text{otherwise}.
    \end{cases} 
\end{align}
Note that the last term in \eqref{eq:F_intro} leads to a reduction of
the interfacial cost by $\frac{\lambda}{\lambda_c}$ compared to the
classical result \cite{ABV} for $\lambda=0$.  On the other hand, for
$\lambda > \lambda_{c}$, the scaling of the minimal energy changes
(see Theorem \ref{th:SUPERcrit-F})
\begin{align}
  \min F_{\eps,\lambda} \sim -\frac{\lambda \eps^{\frac{\lambda_c -
  \lambda}{\lambda}}}{|\log \eps|} \stackrel{\eps \to
  0}\longrightarrow - \infty, 
\end{align}
and sequences $(m_\eps)$ which achieve the optimal scaling
$F_{\eps,\lambda}[m_\eps] \sim \min F_{\eps,\lambda}$ are highly
oscillatory in the sense that
\begin{align}
  \int_{\T^2} |\nabla \left(m_\eps\right)_3| \d3 \sim
  \eps^{\tfrac{\lambda_c - \lambda}{\lambda}}  \stackrel{\eps \to
  0}\longrightarrow +\infty. 
\end{align}
Furthermore, for $\lambda\ge \lambda_c$, the leading order
contributions of all three terms in \eqref{eq:F_intro} cancel. The
main difficulty in the proof is to find asymptotically optimal
estimates for the non-local term.

A reduction of the full three-dimensional micromagnetic energy to a
local two-dimensional model in the thin film limit was first
established rigorously in
\cite{gioia1997micromagnetics}. Subsequently, several thin film
regimes for for magnetically soft materials have been identified and
analyzed, see e.g. \cite{Carbou,DKMO,Moser,KohnSlastikov,Kurzke,
  ignat2010vortex}. However, since we consider materials with high
perpendicular anisotropy, our setting is considerably different, as we
now explain.  For thin films of the form $\Omega = \T^2 \times (0,t)$,
the leading order contribution of the stray field energy penalizes the
out-of-plane component of the magnetization. Neglecting boundary
effects, we have (see e.g. Theorem \ref{th:h-m2d})
\begin{align*}
  \left| \int_{\T^2 \times \R} |h|^2 \d3 - \int_{\T^2 \times (0,t)} m_3^2
  \d3 \right| \lesssim t \int_{\T^2 \times (0, t)} |\nabla m|^2 \d3.
\end{align*}
To our knowledge, the first result in this direction is contained in
\cite{gioia1997micromagnetics}. In the absence of high perpendicular
anisotropy or a sufficiently strong external field (as in the
previously mentioned papers) the micromagnetic energy forces the
out-of-plane component $m_3$ to vanish asymptotically. In our setting,
the anisotropy energy
$Q\int_{\Omega} ( m_1^2 + m_2^2 ) \d3 = Q\int_{\Omega} ( 1-m_3^2 )
\d3$
is however sufficiently strong (recall that $Q>1$) such that low
energy configurations require $m \approx \pm
e_3$ on most of the domain.

The behavior of the material changes when the film can no longer be
considered to be thin. In \cite{CK-1998-2D} the scaling of the ground
state energy was identified for the two-dimensional micromagnetic
model and in \cite{CKO-1999-3D} for the three-dimensional
model. Magnetizations with optimal energy involve so-called branching
domain patterns which become finer and finer as they approach the
boundary of the sample. When the ferromagnetic sample is exposed to a
critical external field, a transition between a uniform and a
branching domain pattern occurs. The critical field strength and the
scaling of the micromagnetic energy for this regime were derived in
\cite{KM-2011-branching}. In our regime, the thickness of the film is
so small that this does not only exclude the branching patterns that
occur in bulk samples, but actually forces the magnetization to become
constant in the direction normal to the film plane.

\paragraph{Notation:} For $x \in \R^3$ we write $x = (x',x_3)$, where
$x'$ is the projection of $x$ onto the first two components. The
square flat torus with side length $\ell>0$ is denoted by
$\T_\ell^2 := (\R^2/\ell\Z^2)$, and we abbreviate $\T^2:=\T^2_1$. We
frequently identify functions $u:\T_\ell^2 \to \R$ with periodic
functions $v:\R^2 \to \R$ by means of the natural projection
$p:\R^2 \to \T_\ell^2$, i.e. $u=v\circ p$.

For $u \in L^1(\T_\ell^2 \times (0,t))$ we write
$\overline{u} \in L^1(\T_\ell^2)$ to denote the ${e_3}$-average, given
by
\begin{align}
  \overline{u}(x') = \frac{1}{t}\int_0^t u(x',x_3) \dd x_3.
\end{align}
Moreover, for every $v \in L^1(\T_\ell^2)$ we write
$\chi_{(0,t)}v \in L^1(\T^2_\ell \times (0,t))$ to denote the function
$(\chi_{(0,t)}v)(x',x_3) = \chi_{(0,t)}(x_3)v(x')$. 

Unless stated otherwise, the expression $f(x) \lesssim g(x)$ means
that there exists a universal constant $C>0$ such that the inequality
$f(x) \le C g(x)$ holds for every $x$. The symbol $\gtrsim$ is defined
analogously with $\ge$ instead of $\le$ and we write $\sim$ if both
$\lesssim$ and $\gtrsim$ hold.

For future reference, we now fix the constants in the definition of
the Fourier coefficients. For $f \in L^2(\T_\ell^2)$, we write
\begin{align}\label{eq:def_fourier}
  \widehat f_k = \int_{\T_\ell^2} e^{-i k \cdot x} f(x) \dx2, 
  && \text{where } k \in \frac{2 \pi}{\ell} \Z^2.  
\end{align}
The inverse Fourier transform is then given by
\begin{align}
  f(x) = \frac{1}{\ell^2} \sum_{k \in \frac{2 \pi}{\ell} \Z^2} e^{i k
  \cdot x} \widehat f_k, 
\end{align}
where convergence is understood in the $L^2(\T_\ell^2)$
sense. Parseval's Theorem then states that
\begin{align}
  \int_{\T^2_\ell} f^*(x)g(x) \d2 = \frac{1}{\ell^2} \sum_{k \in
  \frac{2 \pi}{\ell} \Z^2} {\widehat f}^*_k \widehat g_k 
  &&  \text{for } f,g \in L^2(\T_\ell^2), 
\end{align}
where ``$^*$'' denotes complex conjugation.  Furthermore, we use the
symbol $\nabla^su$ to denote
\begin{align}
  \int_{\T_\ell^2} |\nabla^{s}u|^2 \d2:= \frac{1}{\ell^2}\sum_{k \in
  \frac{2\pi}{\ell}\Z^2} |k|^{2s} |\widehat u_k|^2 
\end{align}
for $s \in \R$. For $s=1/2$ we will also use the following well-known
real space representation of the (square of the) homogeneous
$H^{1/2}(\T_\ell^2)$-norm
\begin{align}
  \int_{\T_\ell^2} |\nabla^{1/2}u|^2 \d2 =
  \frac{1}{4\pi}\int_{\T_\ell^2} \int_{\R^2} \frac{|u(x+y) -
  u(x)|^2}{|y|^3} \dy2 \dx2. 
\end{align}
For the convenience of the reader, a proof is contained in the
appendix.

Lastly, with the usual abuse of notation, for $\eps \to 0$ we will
refer to $(m_\eps) \in H^1(\T^2_\ell; \mathbb S^2)$ as a sequence,
implying the sequence of $m_{\eps_k} \in H^1(\T^2_\ell; \mathbb S^2)$
for some sequence of $\eps_k \to 0$ as $k \to \infty$. Similarly, when
dealing with the family of functionals $\{ F_{\eps,\lambda} \}$ we are
always dealing with sequences $F_{\eps_k, \lambda}$.

\section{Setting}
\label{sec:setting}

In order to non-dimensionalize the micromagnetic energy, we express
lengths as multiples of the exchange length $l_{\rm ex}$ and rescale
(effectively this amounts to setting $l_{\rm ex}=1$).  We are
interested in thin ferromagnetic films of uniform
(non-dimensionalized) thickness $t$. For simplicity, we assume that
the film extends infinitely in the film plane and that its
magnetization is periodic in both in-plane coordinates with period
$\ell$. This means that we neglect boundary effects in the case of a
finite sample of large spatial extent.

The film is composed of a uniaxial ferromagnetic material whose easy
axis is perpendicular to the film plane, i.e. parallel to
$e_3$. Furthermore, we assume that the external field $h_{\rm ext}$ is
parallel to $e_3$ and hence independent of $x_3$ (due to
$\nabla \cdot h_{\rm ext}=0$).  By a slight abuse of notation, from
now on, we consider $h_{ext}: \T_\ell^2 \to \R$ as a scalar
function. The non-dimensionalized energy per unit-cell
$\T_{\ell}^2 \times (0,t)$ then reads
\begin{align}\label{eq:E}
\begin{aligned}
  E[m] &:=\int_{\T_{\ell}^2\times (0, t)} \left( |\nabla m|^2 + Q
    (m_1^2 +m_2^2) - 2 m_3h_{\mathrm{ext}} \right) \d3 +
  \int_{\T_{\ell}^2 \times \R} |h|^2 \d3.
\end{aligned}
\end{align}
In the last term of \eqref{eq:E}, the stray field is the unique
distributional solution $h \in L^2(\T^2_\ell \times \R ; \R^3)$ of
\begin{align}\label{eq:h}
  \nabla \times h = 0 \qquad \text{and} \qquad \nabla \cdot (h+m) = 0
  \quad \text{in } \T^2_{\ell} \times \R, 
\end{align}
where $m \in H^1(\T^2_\ell \times (0,t))$ is extended by zero to
$\T^2_\ell \times \R$. Hence, up to a sign, $h$ equals the Helmholtz
projection of $m$ onto the space of gradients. We also use the
notation $h=h[m]$ to denote the solution of \eqref{eq:h}.

Note that \eqref{eq:E} depends on the three dimensionless parameters
$\ell, t$ and $Q$. We are interested in the asymptotic behavior of the
energy in \eqref{eq:E} for thin films (i.e. $t \ll 1$) with large
extension in the film plane (i.e. $\ell \gg 1$) and high anisotropy
(i.e. $Q>1$).

\begin{figure}%
    \includegraphics[width=0.9\linewidth]{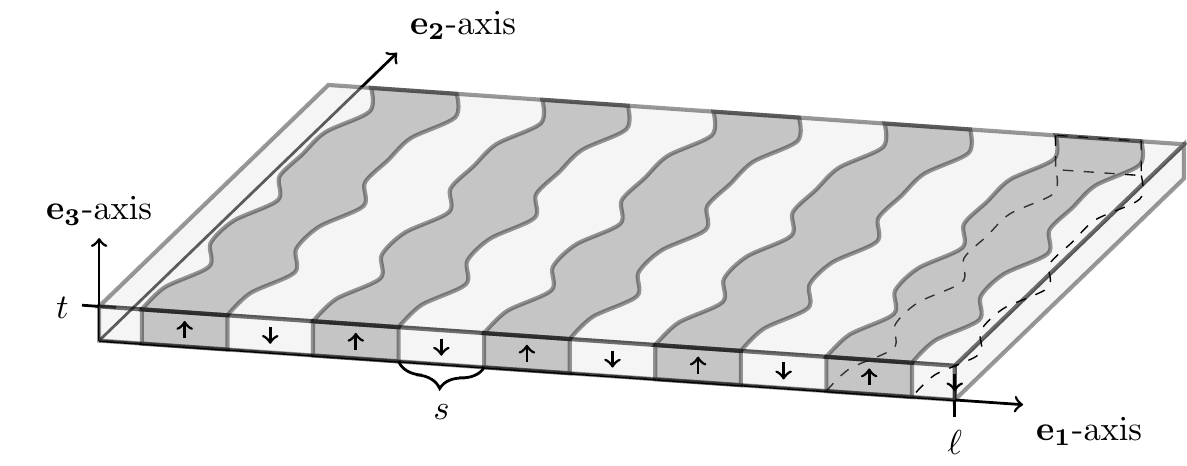}%
    \caption{Typical magnetization pattern ("stripe pattern") in a
      unit cell $\T_\ell^2 \times (0,t)$ of the ferromagnetic
      film. The arrows represent the value of the magnetization $m(x)$
      at $x$, which is approximately constant across regions of the
      same color. The domains are separated by continuous domain walls
      of vanishing thickness, depicted as lines.}%
    \label{fig:tikz:my}%
\end{figure}%

\subsection{Identification of the regimes and the reduced energy $F$}
\label{sec:heuristic}

In this section, we motivate the rigorous results contained in section
\ref{sec:main-results}. We use heuristic arguments to identify the
scaling of the transition between monodomain and multi-domain states,
and to explain how the micromagnetic energy $E$ in \eqref{eq:E} is
related to the two-dimensional reduced energy $F$ in
\eqref{eq:F_intro}. Roughly speaking, we will argue that (upon
rescaling) $F$ is a prototype for the next-to-leading-order term in
the $\Gamma$-development of $E$, cf. \cite{Anzellotti1993}.

To simplify the exposition, we neglect the energy contribution due to
the external field $h_{\rm ext}$. Furthermore we make two assumptions
(for this section only), stated below. These assumptions are actually
consequences of the thin film regime (see \eqref{eq:poincare_3} and
Theorem \ref{th:h-m2d}).  Our assumptions are:
\begin{enumerate}
\item[(i)] The magnetization $m$ is constant in the direction normal
  to the film, i.e.
  \begin{align}\label{eq:I}
    m(x',x_3) = \chi_{(0, t)}(x_3) \overline m(x') \qquad \text{for }
    x=(x',x_3) \in \T_\ell^2 \times (0,t). \tag{i} 
  \end{align}
\item[(ii)] The stray field energy can be approximated by
  \begin{align}
    \label{eq:II}\tag{ii}
    \int_{\T^2_\ell \times \R} |h[m]|^2\d3\approx
    t\int_{\T^2_\ell}\overline m_3^2\d2 - \frac{t^2}{2}
    \int_{\T^2_\ell}|\nabla^{1/2}\overline m_3|^2\d2.  
  \end{align}
\end{enumerate}
Assumption \eqref{eq:I} can be understood as a consequence of the
vanishing thickness of the film which is smaller than the thickness of
optimal domain walls (so-called Bloch walls).

We will now motivate Assumption \eqref{eq:II}. For magnetizations that
are constant in the normal direction of the film, i.e.
$m(x',x_3)= \chi_{(0,t)}(x_3)\overline m(x')$, it is well-known that
the stray field energy splits into a contribution due to the normal
component $\overline m_3$ and a contribution due to the in-plane
divergence
$\nabla' \cdot \overline m' = \p_1 \overline m_1 + \p_2 \overline
m_2$,
see e.g. \cite{A-1966-energy,GC-1999-magnetic}. With the aid of the
Fourier transform, a direct calculation yields (see also Theorem
\ref{th:h-m2d})
\begin{align}\label{eq:h_split_intro}
    \begin{aligned}
      \int_{\T_\ell^2\times \R} |h[m]|^2\d3 &= \frac{1}{\ell^2}\sum_{
        k \in \frac{2 \pi}{\ell}\Z^2}t\sigma(t |k|)|\widehat{\overline
        m}_{3,k}|^2\\
      &\quad + \frac{1}{\ell^2}\sum_{ k \in \frac{2
          \pi}{\ell}\Z^2}t\left(1-\sigma(t
        |k|)\right)\left|\frac{k}{|k|} \cdot \widehat{\overline
          m}'_k\right|^2,
    \end{aligned}
\end{align}
where the Fourier multiplier $\sigma$ is given by
$\sigma(s) = \frac{1-e^{-s}}{s}$.  In the electrostatics analogy, the
first term on the right hand side can be understood as the
contribution of surface charges proportional to $\overline m_3$ at the
top and bottom surface of the film, whereas the second term describes
the contribution due to volume charges proportional to
$\nabla '\cdot \overline m'$.  Since the strong anisotropy requires
$|m_3|\approx 1$ on most of the domain, a scaling argument indicates
that only the contribution due to $m_3$ is relevant. Indeed, since
$|1-\sigma(t|k|)| \le t |k| \le t(1+|k|^2)$ the contribution due to
$m'$ may be estimated by the exchange and anisotropy energy at lower
order
\begin{align}\label{eq:lo}
  \frac{1}{\ell^2}\sum_{ k \in \frac{2
  \pi}{\ell}\Z^2}t\left(1-\sigma(t |k|)\right)\left|\frac{k}{|k|}
  \cdot \widehat{\overline m}'_k\right|^2 \le t^2 \int_{\T_\ell^2}
  \left( |\nabla m|^2 + |m'|^2 \right) \d2. 
\end{align}
The right hand side of \eqref{eq:II} is obtained by neglecting the
second term on the right hand side of \eqref{eq:h_split_intro} and
approximating $\sigma(s)\approx 1-\frac{s}{2}$ in the first term (see
Theorem \ref{th:h-m2d} for a rigorous version).

With \eqref{eq:I}, \eqref{eq:II} and $h_{\rm ext} = 0$, the energy
\eqref{eq:E} can now be written as
\begin{align}\label{eq:E_approx}
  E[m] \approx t\int_{\T^2_\ell} \left( |\nabla \overline m|^2 + Q
  \left(\overline{m}_1^2 + \overline{m}_2^2 \right) \right) \d2 +
  t\int_{\T^2_\ell} \overline{m}_3^2 \dd x -
  \frac{t^2}{2}\int_{\T_\ell^2}|\nabla^{1/2}\overline m_3|^2\d2. 
\end{align}
We use the constraint $|\overline m|=1$ to combine the leading order
stray-field energy term with the anisotropy energy
\begin{align}\label{eq:combine-terms}
   \int_{\T^2_\ell} \overline{m}_3^2 \d2 + \int_{\T^2_\ell}Q \left(\overline{m}_1^2 + \overline{m}_2^2\right)
  \d2 = \ell^2  +
  \int_{\T^2_\ell}(Q-1) \left(\overline{m}_1^2 +
  \overline{m}_2^2\right) \d2.
\end{align}
Inserting \eqref{eq:combine-terms} into \eqref{eq:E_approx} allows to
extract the leading order constant
\begin{align}
  E[m] \approx \ell^2 t + t\left(\int_{\T^2_\ell} \left( |\nabla \overline
  m|^2 + (Q-1) \left(\overline{m}_1^2 + \overline{m}_2^2 \right)
  \right) \d2 -
  \frac{t}{2}\int_{\T_\ell^2}|\nabla^{1/2}\overline m_3|^2\d2\right). 
\end{align}
Upon rescaling $\T_\ell^2$ to the fixed domain $\T^2$ and
renormalizing the energy, we obtain
\begin{align}\label{eq:E-heuristic-3}
\begin{aligned}
  \frac{E[m(\ell \cdot)]-\ell^2 t}{\ell t \sqrt{Q-1}} &\approx
  \int_{\T^2} \left( \frac{1}{\ell \sqrt{Q-1}}|\nabla \overline m|^2 +
    \ell\sqrt{Q-1} \left(\overline{m}_1^2 + \overline{m}_2^2\right)
  \right) \d2\\ &\qquad-
  \frac{t}{2\sqrt{Q-1}}\int_{\T^2}|\nabla^{1/2}\overline m_3|^2\d2.
    \end{aligned}
\end{align}
In order to determine the critical scaling where minimizers of
\eqref{eq:E-heuristic-3} cease to be constant and start to oscillate,
we ask for which $\ell,t$ and $Q$ it is possible to control the last
term by the first integral
\begin{align}
  \frac{t}{2\sqrt{Q-1}}\int_{\T^2}|\nabla^{1/2}\overline m_3|^2\d2
  \stackrel{\textbf{?}}\lesssim \int_{\T^2} \left( \frac{1}{\ell
  \sqrt{Q-1}}|\nabla \overline m|^2 + \ell\sqrt{Q-1}
  \left(\overline{m}_1^2 + \overline{m}_2^2\right) \right) \d2. 
\end{align}

\begin{figure}
    \centering \includegraphics[width=0.9\linewidth]{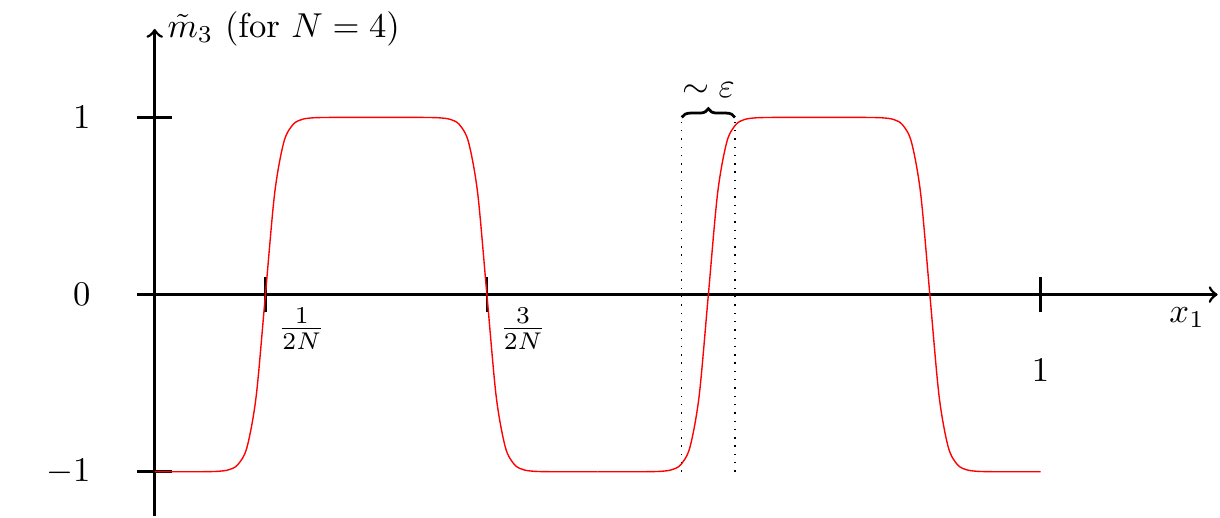}%
    \caption{One-dimensional ansatz modeling a stripe pattern.}
    \label{fig:ansatz}
\end{figure}

We make a one-dimensional ansatz $\tilde m$ corresponding to $N$
domains separated by smooth domain walls of width $\eps$, see Figure
\ref{fig:ansatz}.  For the nonlocal term, a straightforward
computation yields (see Lemma \ref{le:opt})
\begin{align}
  \int_{\T^2} |\nabla^{1/2}\tilde m_3|^2 \dx2 =
  \frac{1}{4\pi}\int_{\T^2} \int_{\R^2} \frac{|\tilde m_3(x+z)-\tilde
  m_3(x)|^2}{|z|^3} \dz2 \dx2 \approx {\frac{4}{\pi}}
  \log\left(\frac{1}{\eps N}\right)N.
\end{align}
Since the nonlocal term depends only logarithmically on the transition
layer, we optimize the width and internal structure of the transition
layer for the first two terms in the energy by choosing
$\eps = \frac{1}{\ell \sqrt{Q-1}}$.  For the corresponding Bloch wall
profiles \cite{HS-1998-magnetic_domains}, we obtain
\begin{align}
  \int_{\T^2} \left( \frac{1}{\ell \sqrt{Q-1}}|\nabla \tilde m|^2 +
  \ell\sqrt{Q-1}(\tilde m_1^2 + \tilde m_2^2) \right) \dx2
  \approx 2\int_{\T^2} 
  |\nabla \tilde m_3| \dx2 \approx 4N. 
\end{align}
Hence
\begin{align}\label{eq:E_N}
  \frac{E[\tilde m(\ell \cdot)]-\ell^2 t}{\ell t \sqrt{Q-1}} \approx
  N\left(4 - \frac{2 t}{\pi \sqrt{Q-1}}\log\left(\frac{\ell
  \sqrt{Q-1}}{N}\right)\right). 
\end{align}
The (renormalized) energy of our ansatz \eqref{eq:E_N} becomes
negative, i.e. smaller than the energy of the constant configurations
$m\equiv \pm e_3$, if
$8\sqrt{Q-1} < \frac{4}{\pi} t\log\left(\frac{\ell
    \sqrt{Q-1}}{N}\right)$.
By monotonicity in $N$, we expect that the critical scaling occurs for
$N=1$ and $t \sim t_c$, where
\begin{align}\label{eq:scaling-approx}
  t_c \approx \frac{2 \pi \sqrt{Q-1}}{\log\left(\ell
  \sqrt{Q-1}\right)} 
\end{align}
is the critical thickness of the onset of multidomain states.

Inserting \eqref{eq:scaling-approx} into \eqref{eq:E-heuristic-3} and
abbreviating 
\begin{align}
  \label{eq:epslam}
  \eps = \frac{1}{\ell \sqrt{Q-1}},  \qquad \lambda = \frac{t \log \left(
  \ell \sqrt{Q - 1} \right)}{4 \sqrt{Q - 1}},
\end{align}
we are led to study the asymptotic behavior for $\eps \to 0$ of the
family of functionals
$F_{\eps, \lambda}: L^1(\T^2; \S^2) \to \R \cup \{+\infty\}$, given by
\begin{align}\label{eq:F}
\begin{aligned}
  F_{\eps, \lambda}[m] &=
    \begin{cases}
    \begin{aligned}
        \int_{\T^2} \left(\frac{\eps}2 |\nabla m|^2 + \frac1{2\eps}
          \left(1-m_3^2\right)\right)\d2- \frac{\lambda}{|\log
          \eps|}\int_{\T^2}|\nabla^{1/2} m_3|^2\d2 
      \end{aligned} & \\
      \hspace{7.5cm} \text{if } m \in H^1(\T^2; \S^2),\\
      +\infty \hspace{6.8cm} \text{otherwise},
\end{cases}
\end{aligned}
\end{align}
where $\lambda \sim 1$ is a fixed parameter and with
$\min E \approx \ell^2 t + 2 \ell t \sqrt{Q - 1} \, \min F_{\eps,
  \lambda}$.

\begin{rmk}(Natural cut-off in the stray field
  energy)\label{eq:nat-scales} For thin films, a natural approximation
  for the stray field energy is given by
    \begin{align}
      \int_{\T_\ell^2 \times \R} |h[m_3e_3]| \d3 \approx
      \int_{\T_\ell^2 \times (0,t)} m_3^2 \d3 - \frac{t^2}{8\pi}
      \int_{\T_\ell^2}\int_{\R^2 \setminus B_{t}} \frac{|\overline
      m_3(x+z) - \overline m_3(x)|^2}{|z|^3} \dz2 \dx2, 
    \end{align}
    i.e. the region $|z|\le t$ is excluded in the last
    integral. However, our approximations in \eqref{eq:II} and in
    Theorem \ref{th:h-m2d} ignore this cut-off. We will now explain,
    that due to periodicity, this cut-off is not relevant in our
    setting. Roughly speaking, the reason is that the length scale of
    the cut-off is much smaller than the width of domain walls, which
    is the smallest length scale on which $m$ varies. More precisely,
    we have (see Lemma \ref{le:main_ineq})
    \begin{align}
        \begin{aligned}\label{eq:cutoff}
          t^2\int_{\T_\ell^2}\int_{B_{t}} \frac{|\overline m_3(x+z) -
            \overline m_3(x)|^2}{|z|^3} \dx2 \dz2 &\lesssim
          t^3\int_{\T_\ell^2 } |\nabla \overline m_3|^2 \d2\\ 
          &\lesssim t^2\int_{\T_\ell^2 \times (0,t)} |\nabla m|^2 \d3,
        \end{aligned}
    \end{align}
    so that the effect due to the cut-off is controlled by the
    exchange energy at lower order. Here we have implicitly used that
    the film is periodic and hence does not have boundaries. On the
    other hand, if the ferromagnetic material is modeled by a finite
    domain $\Omega \times (0,t)$, exploiting the cut-off in the stray
    field energy becomes crucial: At the boundary $\partial \Omega$,
    the out-of-plane component $m_3$ may have a jump so that
    $\|m_3\|_{H^{1/2}(\R^2)}$ would be infinite. Since the exchange
    energy is oblivious to this jump at the boundary,
    \eqref{eq:cutoff} does not hold for $\Omega$ instead of
    $\T_\ell^2$.
\end{rmk}

\section{Main results and overview of the proof}
\label{sec:main-results}

Our main result is the identification of two thin-film regimes
separated by a transition and the derivation of the asymptotic
behavior of the energy in the regimes. We will state the results for
the full energy $E$ in Section \ref{sec:results-for-E} and for the
reduced energy $F$ in Section \ref{sec:results-for-F}.

\subsection{Results for the full energy $E$}
    \label{sec:results-for-E}

In terms of $\ell$, $t$ and $Q$, the regimes may be expressed by
\begin{align}\label{eq:regime}
  Q > 1, \qquad \ell \gg 1 \qquad \text{ and }\qquad \frac{t|\log
  \left(\ell\sqrt{Q-1}\right)|}{4\sqrt{Q-1}} = \lambda  
\end{align}
and $\lambda_c:=\pi/2$, where
\begin{enumerate}
\item[\,] $\lambda < \lambda_c$ corresponds to the subcritical regime
  featuring single domain states,
\item[\,] $\lambda = \lambda_c$ corresponds to the transition,
\item[\,]
  $\lambda_c < \lambda < \gamma \frac{|\log(\ell \sqrt{Q-1})|}{Q-1}$,
  for some universal $\gamma >0$, corresponds to the multidomain
  state.
\end{enumerate}
The upper bound $\lambda < \gamma \frac{|\log(\ell \sqrt{Q-1})|}{Q-1}$
is necessary because in general magnetizations may not be
approximately two-dimensional beyond this threshold.

It is convenient to rescale the domain of the ferromagnetic film to a
fixed domain by means of the anisotropic transformation
\begin{align}\label{eq:transform}
  \T^2_\ell \times (0,t) \to \T^2 \times (0,1) \quad \text{with }
  (x_1,x_2,x_3) \mapsto \left(\frac{x_1}{\ell},\frac{x_2}{\ell},
  \frac{x_3}{t}\right), 
\end{align}
and study the renormalized energy
$J: L^1(\T^2\times (0,1); \S^2) \to \R \cup \{+\infty\}$, defined by
\begin{align}\label{eq:J}
  J[m]:= \begin{cases}
    \displaystyle \frac{E[m(\ell \cdot, \ell \cdot, t \cdot)] - \ell^2
      t}{\ell t \sqrt{Q-1}} & \text{for }m \in H^1(\T^2\times (0,1);
    \S^2),\\ 
    +\infty & \text{otherwise}.
\end{cases}
\end{align}
The asymptotic behavior of \eqref{eq:J} in the subcritical regimes is
characterized in the following theorem.

\begin{theorem}[Subcritical regime]\label{th:SUBcrit-E}
  Let $\lambda_c:={\frac{\pi}{2}}$, $\lambda \in [0,\lambda_c)$, $Q>1$
  and $(\ell_k,t_k, h_{{\rm ext},k})_{k \in \N}$ be a sequence with
    \begin{align}\label{eq:SUBcrit-E-regime}
      \ell_k \to \infty, \quad \frac{t_k|\log
      \left(\ell_k\sqrt{Q-1}\right)|}{ 4\sqrt{Q-1}} = \lambda \quad
      \text{and} \quad \frac{\ell_k}{\sqrt{Q-1}} \, h_{\rm ext,k}(\ell_k
      \cdot) \to  g 
    \end{align}
    for some $g \in L^1(\T^2)$ and for all $k \in \N.$ Then the
    sequence of renormalized energies $\{J_k\}_{k \in \N},$ defined by
    \eqref{eq:J} with $(\ell,t,h_{\rm ext})$ replaced by
    $(\ell_k,t_k,h_{{\rm ext},k})$, satisfies
    \begin{enumerate}
    \item \textbf{Compactness}: For every sequence
      $(m_k) \in L^1(\T^2\times (0,1);\S^2)$ with
      \begin{align}\label{eq:E_bound}
        \limsup_{k \to \infty}J_k[m_k] < + \infty,
      \end{align}
      there exists a sub-sequence (not relabeled) and
      $\overline m \in BV(\T^2;\{\pm e_3\})$ such that
      \begin{align}\label{eq:m_conv}
        \int_{\T^2 \times (0,1)}|m_k(x)- \overline m(x')| \dx3 \to 0
        \text{ for }k \to \infty. 
      \end{align}
    \item \textbf{$\Gam$-Convergence}: The sequence of functionals
      $\{J_k\}_{k \in \N}$ $\Gamma$-converges towards\\
      $J_*: L^1(\T^2;\{\pm e_3\}) \to \R \cup \{+\infty\}$ given by
      \begin{align}
        J_*[\overline m] 
        &= \left\{
          \begin{array}{ll}
            \displaystyle
            2\left(1-\frac{\lambda}{\lambda_c}\right)\int_{\T^2}
            |\nabla \overline m_3|\d2 - 2
            \int_{\T^2}g\overline{m}_3 \d2  
            & \text{
              if
              }\overline
              m \in
              BV(\T^2;\{\pm
              e_3\}),\\ 
            +\infty 
            & \text{ otherwise}.
          \end{array}
              \right.
      \end{align}
      This means
      \begin{enumerate}
      \item liminf - Inequality: Every sequence
        $(m_k) \in L^1(\T^2\times (0,1);\S^2)$ that converges towards
        $\overline m \in L^1(\T^2;\{\pm e_3\})$ in the sense of
        \eqref{eq:m_conv} satisfies
        \begin{align}
          \liminf_{k \to 0} J_k[m_k]\ge J_*[\overline m].
        \end{align}
      \item Recovery Sequence: For every
        $\overline m \in L^1(\T^2,\{\pm e_3\})$ there exists a
        sequence of magnetizations
        $(m_k) \in L^1(\T^2 \times (0,1);\S^2)$ which converges
        towards $\overline m$ in the sense of \eqref{eq:m_conv} and
        satisfies
        \begin{align}
          \limsup_{k \to 0} J_k[m_k]\le J_*[\overline m].
        \end{align}
      \end{enumerate}
    \end{enumerate}
\end{theorem}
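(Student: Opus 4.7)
The plan is to reduce the three-dimensional functional $J_k$ to the two-dimensional reduced energy $F_{\eps_k,\lambda}$ of~\eqref{eq:F} with $\eps_k=1/(\ell_k\sqrt{Q-1})$, and then to transfer the $\Gamma$-convergence statement for $F$ (Theorem~\ref{th:SUBcrit-F}) to $J_k$. Concretely, for $m_k\in H^1(\T^2\times(0,1);\S^2)$ with $\overline m_k$ the vertical average of the pullback of $m_k$ to the physical film, I aim to prove a two-sided asymptotic identity of the form $J_k[m_k]=2F_{\eps_k,\lambda}[\overline m_k]-2\int_{\T^2}g_k\,\overline m_{k,3}\d2+o(1)$ whenever $m_k$ is approximately $x_3$-independent and $J_k[m_k]$ is bounded; the linear external-field term converges by the assumption $g_k\to g$ in~\eqref{eq:SUBcrit-E-regime}, so the entire argument rests on analyzing the interior energy.

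For the compactness claim, I would first use a Poincaré inequality in $x_3$ to control $\|m_k-\chi_{(0,1)}\overline m_k\|_{L^2}^2$ by $t_k^2$ times the exchange energy, which vanishes under~\eqref{eq:SUBcrit-E-regime}. Applying Theorem~\ref{th:h-m2d} to expand the stray-field term, using the constraint $|m_k|=1$ to merge the dominant surface-charge contribution $t_k\int\overline m_{k,3}^2\d2$ with the anisotropy as in~\eqref{eq:combine-terms}, and estimating the volume-charge piece and the $\sigma(s)\approx 1-s/2$ correction by the exchange energy as in~\eqref{eq:lo}, one obtains that $F_{\eps_k,\lambda}[\overline m_k]$ is bounded. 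The compactness half of Theorem~\ref{th:SUBcrit-F} then produces a subsequence with $\overline m_k\to \overline m$ in $L^1(\T^2)$ and $\overline m\in BV(\T^2;\{\pm e_3\})$, and the Poincaré estimate upgrades this to the three-dimensional convergence~\eqref{eq:m_conv}.

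For the $\liminf$-inequality, assume $J_k[m_k]$ is bounded, apply the reduction above with inequalities in the correct direction, and use the $\Gamma$-liminf part of Theorem~\ref{th:SUBcrit-F} together with~\eqref{eq:F_gam} to obtain $\liminf_k F_{\eps_k,\lambda}[\overline m_k]\ge (1-\lambda/\lambda_c)\int_{\T^2}|\nabla \overline m_3|\d2$. Multiplying by $2$ and adding the converging external-field contribution yields $\liminf J_k[m_k]\ge J_*[\overline m]$. The recovery sequence runs in reverse: starting from a recovery sequence $\overline m_k$ for $F_{\eps_k,\lambda}$ provided by Theorem~\ref{th:SUBcrit-F}, I define the three-dimensional candidate $m_k(x',x_3):=\overline m_k(x')$ on $\T^2\times(0,1)$, which is $x_3$-independent, so the exchange and anisotropy terms in $J_k$ reproduce those in $F_{\eps_k,\lambda}$ exactly; Theorem~\ref{th:h-m2d} applied as an upper bound together with the same lower-order controls used in the compactness step then yields $\limsup J_k[m_k]\le J_*[\overline m]$.

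The main obstacle is to package the collection of lower-order corrections arising from the three-dimensional-to-two-dimensional reduction with the precise prefactors $\ell_k t_k\sqrt{Q-1}$ appearing in the normalization of $J$, so that each error is genuinely $o(1)$ in the regime~\eqref{eq:SUBcrit-E-regime}. The most delicate piece is the short-range contribution to the $H^{1/2}$-seminorm in $F_{\eps,\lambda}$, where the physical stray field has a cutoff at scale $t_k$ while the reduced functional integrates over all of $\R^2$; the estimate~\eqref{eq:cutoff} neutralises this for the periodic torus, and Remark~\ref{eq:nat-scales} warns that such an argument would fail in the presence of a genuine boundary. A secondary subtlety is that the Poincaré reduction must be strong enough that $|\overline m_k|\to 1$ in $L^1(\T^2)$, which follows by combining the exchange bound with the anisotropy penalty $\eps_k^{-1}\int(1-m_{k,3}^2)\d2$ hidden inside $F_{\eps_k,\lambda}$.
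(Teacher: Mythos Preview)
Your recovery-sequence argument is essentially the paper's: take the two-dimensional recovery sequence from Lemma~\ref{le:constr}, extend it constantly in $x_3$, and control the error with the upper bound built from Theorem~\ref{th:h-m2d}. That part is fine.

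The lower-bound and compactness half, however, has a real gap. You propose to evaluate $F_{\eps_k,\lambda}$ at the vertical average $\overline m_k$ and then invoke Theorem~\ref{th:SUBcrit-F}. But $F_{\eps,\lambda}$ in~\eqref{eq:F} is defined to be $+\infty$ outside $H^1(\T^2;\S^2)$, and for a genuinely three-dimensional $m_k$ the average $\overline m_k$ does \emph{not} take values in $\S^2$; by Jensen $|\overline m_k|\le 1$ with strict inequality wherever $m_k$ varies in $x_3$. So $F_{\eps_k,\lambda}[\overline m_k]=+\infty$ as written, and the ``two-sided asymptotic identity'' you are after cannot be formulated this way. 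Relaxing the constraint does not help: the core pointwise estimate $|\nabla m_3|\le \tfrac{\eps}{2}|\nabla m|^2+\tfrac{1}{2\eps}(1-m_3^2)$ (\eqref{eq:A2}) that drives both compactness and the liminf for $F$ uses $|m|=1$ in an essential way and fails for $\overline m_k$. Calling this a ``secondary subtlety'' understates the problem; knowing $|\overline m_k|\to 1$ in $L^1$ is far from enough to make the Modica--Mortola machinery go through at the level of $\overline m_k$.

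The paper avoids this issue by \emph{not} passing through $F[\overline m_k]$ for the lower bound. Instead it keeps the local (exchange and anisotropy) part three-dimensional, where $|m_k|=1$ holds pointwise, applies Lemma~\ref{le:main_ineq} directly to the scalar function $\overline m_{k,3}$ to control the nonlocal term, and then uses \eqref{eq:A2} on $\T^2\times(0,1)$ together with the averaging inequality~\eqref{eq:A3} to descend to $\int_{\T^2}|\nabla'\overline m_{k,3}|$. The paper even remarks explicitly (at the start of Section~\ref{sec:SUBcrit-proof-E}) that the alternative route---working slice-by-slice at fixed $x_3$, where the constraint does hold---is technically delicate because $C^\infty(\T^2\times(0,1);\S^2)$ is not dense in $H^1(\T^2\times(0,1);\S^2)$, so evaluating traces on slices while preserving $|m|=1$ requires care. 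Your plan would need either to implement that slice argument rigorously or to follow the paper's direct route; as it stands, the reduction to $F_{\eps_k,\lambda}[\overline m_k]$ does not work.
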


Whereas the energy favors single domain states in the subcritical
regime, our next theorem shows that the energy leads to pattern
formation in the supercritical regime.

\begin{theorem}[Supercritical regime]\label{th:SUPERcrit-E}
  Let $h_{\rm ext}=0$. There are universal constants $\delta, K > 0$
  such that for $Q,\ell,t>0$ in the regime 
    \begin{align}\label{eq:re2}
      Q>1, \quad t \le \delta \min\left\{\sqrt{Q-1},
      \frac{1}{\sqrt{Q-1}}\right\} 
      \quad \text{and} \quad 
      \ell \ge K \frac{e^{2\pi t^{-1} \sqrt{Q-1}}}{\sqrt{Q-1}}
    \end{align}
    the minimal renormalized energy $J$ in \eqref{eq:J} scales as
    \begin{align}
      -Ct\ell e^{-2\pi t^{-1} \sqrt{Q-1}} \le \min J[m] \le -ct\ell
      e^{-2\pi t^{-1} \sqrt{Q-1}},
    \end{align}
    for some universal constants $0<c<C$.
\end{theorem}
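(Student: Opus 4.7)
The plan is to reduce Theorem~\ref{th:SUPERcrit-E} to the scaling statement for the two-dimensional reduced energy $F_{\eps,\lambda}$ (Theorem~\ref{th:SUPERcrit-F}) by carrying out the heuristic reduction of Section~\ref{sec:heuristic} quantitatively. With the identifications $\eps=(\ell\sqrt{Q-1})^{-1}$ and $\lambda=t\log(\ell\sqrt{Q-1})/(4\sqrt{Q-1})$ from \eqref{eq:epslam}, a direct computation shows
\[
  \frac{\lambda}{|\log\eps|}\,\eps^{(\lambda_c-\lambda)/\lambda}
  =\frac{t\,\ell}{4}\,e^{-2\pi\sqrt{Q-1}/t},
\]
so the target bounds are exactly what Theorem~\ref{th:SUPERcrit-F} predicts for $\min F_{\eps,\lambda}$ via $\min J\approx 2\min F_{\eps,\lambda}$. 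Moreover the standing regime \eqref{eq:re2} translates into $\lambda\ge\lambda_c+c$ for a universal $c>0$, placing us strictly in the supercritical regime of $F_{\eps,\lambda}$; the smallness condition on $t$ will be what makes the 3D-to-2D error terms absorbable.

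\textbf{Upper bound.} Given a near-optimizer $\tilde m\in H^1(\T^2;\S^2)$ for $F_{\eps,\lambda}$ from Theorem~\ref{th:SUPERcrit-F}, define a test field on $\T_\ell^2\times(0,t)$ by $m(x',x_3):=\tilde m(x'/\ell)$, which is independent of $x_3$ so that \eqref{eq:I} holds exactly. By Theorem~\ref{th:h-m2d} the stray field contribution is equal to the right hand side of \eqref{eq:II} up to an $\mathrm{O}(t^3\|\nabla\overline m_3\|_{L^2}^2)$ correction, and after scaling to the unit torus and extracting the constant $\ell^2 t$ exactly as in \eqref{eq:E-heuristic-3}–\eqref{eq:E_N} one obtains $J[m]\le 2F_{\eps,\lambda}[\tilde m]+R$, where $R$ collects both the $\sigma(s)=1-s/2+\mathrm{O}(s^2)$ remainder in the stray-field multiplier and the factor coming from the replacement of $m_3^2+Q(m_1^2+m_2^2)$ by $1+(Q-1)(\cdot)$. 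In the regime \eqref{eq:re2}, $R$ is of strictly lower order than the leading negative term, yielding $\min J\le -c\,t\ell\,e^{-2\pi\sqrt{Q-1}/t}$.

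\textbf{Lower bound.} This is the more delicate half. For an arbitrary $m\in H^1(\T_\ell^2\times(0,t);\S^2)$, the first step is a one-dimensional Poincaré inequality in the thin direction giving
\[
  \int_{\T_\ell^2\times(0,t)}|m-\overline m|^2\d3\lesssim t^2\int_{\T_\ell^2\times(0,t)}|\partial_3 m|^2\d3,
\]
which, together with $|m|=1$, lets us replace $m_1^2+m_2^2$ by $\overline m_1^2+\overline m_2^2$ up to an error $\lesssim t^2\int|\nabla m|^2$. Theorem~\ref{th:h-m2d} then bounds the stray-field energy from below by the right hand side of \eqref{eq:II} modulo a remainder of the same type. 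Renormalising exactly as in \eqref{eq:E-heuristic-3} produces
\[
  J[m]\ge 2F_{\eps,\lambda}\!\left[\tfrac{\overline m}{|\overline m|}\right]-R',
\]
where the projection of $\overline m$ onto $\S^2$ uses $1-|\overline m|^2\lesssim t^{-1}\|m-\overline m\|_{L^2}^2$. The smallness assumptions $t\sqrt{Q-1}\lesssim1$ and $t/\sqrt{Q-1}\lesssim1$ in \eqref{eq:re2} are precisely what make $R'$ and the normalisation error absorbable into the 2D exchange and anisotropy terms. Applying Theorem~\ref{th:SUPERcrit-F} on the right hand side delivers the matching lower bound.

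\textbf{Main obstacle.} The principal difficulty is that the leading nontrivial scale $t\ell\,e^{-2\pi\sqrt{Q-1}/t}$ is exponentially small, so the 3D-to-2D approximation errors -- all of which have the shape $t^2\int|\nabla m|^2$ times a polynomial in $t$ and $\sqrt{Q-1}$ -- must be shown to be of strictly lower order without any slack. This is exactly the role played by the quantitative smallness condition on $t$ in \eqref{eq:re2} and by the gap $\lambda-\lambda_c\gtrsim1$ implied by the lower bound on $\ell$; both are thresholds at which the non-local term in $F_{\eps,\lambda}$ dominates the interfacial energy enough for the supercritical scaling of Theorem~\ref{th:SUPERcrit-F} to survive the reduction.
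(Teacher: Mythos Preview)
Your upper-bound argument is essentially the paper's: thicken the two-dimensional stripe construction of Lemma~\ref{le:F-SUPERcrit-ub} to an $x_3$-independent field and insert it into the upper bound \eqref{eq:71-ub} coming from Theorem~\ref{th:h-m2d}; the error terms are indeed lower order under \eqref{eq:re2}.

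The lower bound, however, has a real gap. You propose
\[
J[m]\ \ge\ 2\,F_{\eps,\lambda}\!\left[\frac{\overline m}{|\overline m|}\right]-R'
\]
and then invoke Theorem~\ref{th:SUPERcrit-F}. The projection $\overline m\mapsto \overline m/|\overline m|$ is the problem. You only control $\int_{\T^2}(1-|\overline m|^2)$ via Poincar\'e, i.e.\ in $L^1$; you have no pointwise lower bound on $|\overline m|$. Consequently $|\nabla(\overline m/|\overline m|)|$ can be much larger than $|\nabla\overline m|$ where $|\overline m|$ is small, and there is no reason the exchange term of $J$ dominates the exchange term of $F_{\eps,\lambda}[\overline m/|\overline m|]$. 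The smallness conditions on $t$ in \eqref{eq:re2} do not help here: they bound integrated quantities, not the pointwise size of $|\overline m|$.

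The paper sidesteps this entirely. Starting from Lemma~\ref{le:reduction-of-E} (packaged as \eqref{eq:71-lb}), it applies the $H^{1/2}$ estimate of Lemma~\ref{le:main_ineq} to the \emph{scalar} function $\overline m_3$, which only needs $\|\overline m_3\|_\infty\le 1$. The Modica--Mortola step \eqref{eq:A2}, namely $|\nabla m_3|\le\frac{\eps}{2}|\nabla m|^2+\frac{1}{2\eps}(1-m_3^2)$, is applied pointwise on the three-dimensional field $m$, where the constraint $|m|=1$ is available, and Jensen's inequality \eqref{eq:A3} then passes from $m_3$ to $\overline m_3$. After this one optimises directly in $\mu=\eps\, e^{2\pi\sqrt{Q-1}/t}\int_{\T^2}|\nabla\overline m_3|$, exactly as in Lemma~\ref{le:F-SUPERcrit-lb}, without ever constructing an $\S^2$-valued two-dimensional competitor. (An alternative the paper mentions is to apply the $F$ lower bound on slices $\{x_3=\text{const}\}$, which also avoids normalisation; your averaging-then-projecting route is a third option, but it is the one that requires the most work to justify.)

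A minor point: the claim that \eqref{eq:re2} gives $\lambda\ge\lambda_c+c$ for a universal $c>0$ is not correct, since $\lambda-\lambda_c\ge \frac{t\log K}{4\sqrt{Q-1}}$ can be arbitrarily small. What \eqref{eq:re2} does give is exactly the condition $\eps^{(\lambda-\lambda_c)/\lambda}\le K^{-1}$ of Theorem~\ref{th:SUPERcrit-F}, which is what is actually needed.
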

Furthermore, profiles achieving the optimal scaling in the regime
\eqref{eq:re2} can be characterized as follows.
\begin{proposition}\label{prop:properties}
  Let $\delta,K$ be as in Theorem \ref{th:SUPERcrit-E},
  $h_{\rm ext}=0$ and let $\ell,t,Q$ satisfy \eqref{eq:re2}. For any
  $\gamma > 0$ and all $m \in H^1(\T^2 \times (0,1); \S^2)$ which
  satisfy
    \begin{align}\label{eq:t-opt-gamma}
        J[m] \le - \gamma t\ell e^{-2\pi t^{-1} \sqrt{Q-1}},
    \end{align}
    we have 
        \begin{align}\label{eq:t2-i1}
          (i)&& &\int_{\T^2\times (0,1)} |m - \overline m|^2 \d3 \le
                  c_\gamma t^3 e^{-2\pi t^{-1}
                  \sqrt{Q-1}} \sqrt{Q-1},\\ 
            (ii)&& \label{eq:oop}&\int_{\T^2\times (0,1)} \left(m_1^2 + m_2^2\right)
                     \d3 \le c_\gamma e^{-2\pi t^{-1} \sqrt{Q-1}},&&\\ 
          (iii)&& \label{eq:wl}
                &c_\gamma \ell e^{-2\pi t^{-1} \sqrt{Q-1}} \sqrt{Q-1}
                  \le \int_{\T^2} |\nabla' \overline m_3| \d2 \le
                  C_\gamma \ell e^{-2\pi t^{-1}
                  \sqrt{Q-1}} \sqrt{Q-1}\\ 
          (iv)&& &\int_{\T^2 \times (0,1)} \left( { |\nabla
                   m|^2 \over \ell \sqrt{Q - 1}} +\ell
                   \sqrt{Q - 1} \, (1-m_3^2) \right) \dx3-
                   2\int_{\T^2} |\nabla \overline m_3|
                   \dx2 \notag \\ 
             &&  \label{eq:bw} &\hspace{4cm}\le c_\gamma
                                 \frac{t}{\sqrt{Q-1}}\int_{\T^2} |\nabla \overline
                                 m_3| \dx2, 
     \end{align}
     where $0<c_\gamma<C_\gamma$ are constants (changing from line to
     line) which may depend only on $\gamma$.
\end{proposition}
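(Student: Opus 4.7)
The plan is to reduce the three-dimensional energy to its two-dimensional avatar $F_{\eps,\lambda}$ defined in \eqref{eq:F} and then deduce all four assertions by combining a quantitative near-minimality analysis of $F_{\eps,\lambda}$ (that should be established in the course of proving Theorem \ref{th:SUPERcrit-F}) with a thin-film rigidity in the $x_3$-direction coming from the regime assumption \eqref{eq:re2}.

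First, I would carry out the thin-film reduction. Starting from the algebraic identity $E-\ell^2 t = \int|\nabla m|^2 + (Q-1)\int(m_1^2+m_2^2) + (\int|h|^2 - \int m_3^2)$ (which uses only $|m|=1$), I would invoke Theorem \ref{th:h-m2d} to expand the stray-field deficit precisely, and use a one-dimensional Poincaré inequality $\int(m_3-\overline m_3)^2\le \pi^{-2}\int|\partial_3 m_3|^2$ to absorb the resulting $L^2$-correction into the out-of-plane exchange. The smallness of $t^2(Q-1)$ guaranteed by \eqref{eq:re2} is essential here: it makes the absorption constant harmless and leaves a definite positive portion of the exchange. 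After rescaling via \eqref{eq:transform}, this yields a lower bound
\[
J[m] \ge 2\,\widetilde F_{\eps,\lambda}[\overline m] + \frac{c_1}{\eps t^2(Q-1)}\int_{\T^2\times(0,1)}|\partial_3 m|^2\d3,
\]
where $\widetilde F_{\eps,\lambda}[\overline m] = \int\bigl(\tfrac{\eps}{2}|\nabla\overline m|^2 + \tfrac{1}{2\eps}(1-\overline m_3^2)\bigr)\d2 - \tfrac{\lambda}{|\log\eps|}\int|\nabla^{1/2}\overline m_3|^2\d2$ is the natural extension of \eqref{eq:F} to maps $\overline m\colon \T^2\to \overline{B_1}$, and $c_1>0$ is universal. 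An analogous positive remainder $(1/\eps)\int(m_1^2+m_2^2 - \overline m_1^2 - \overline m_2^2)\d3$ is also retained on the right-hand side for assertion (ii).

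The analog of the lower bound in Theorem \ref{th:SUPERcrit-F} for $\widetilde F_{\eps,\lambda}$ gives $\widetilde F_{\eps,\lambda}[\overline m]\gtrsim -t\ell\,e^{-2\pi\sqrt{Q-1}/t}$, so hypothesis \eqref{eq:t-opt-gamma} forces both positive remainders to be $\lesssim t\ell\,e^{-2\pi\sqrt{Q-1}/t}$. Using the identity $\eps\ell(Q-1)=\sqrt{Q-1}$, the first rearranges to $\int|\partial_3 m|^2\lesssim t^3\sqrt{Q-1}\,e^{-2\pi\sqrt{Q-1}/t}$, which yields (i) by Poincaré on $(0,1)$; the second gives $\int(m_1^2+m_2^2)\lesssim (t/\sqrt{Q-1})\,e^{-2\pi\sqrt{Q-1}/t}\le \delta\,e^{-2\pi\sqrt{Q-1}/t}$, which is (ii) thanks to $t\le\delta\sqrt{Q-1}$. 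For (iii) and (iv) I would additionally invoke the sharp logarithmic interpolation
\[
\frac{2\lambda}{|\log\eps|}\int_{\T^2}|\nabla^{1/2}\overline m_3|^2\d2 \le 2\bigl(1+O(t/\sqrt{Q-1})\bigr)\int_{\T^2}|\nabla\overline m_3|\d2,
\]
(also to be established inside the proof of Theorem \ref{th:SUPERcrit-F}), together with the pointwise Modica-Mortola bound $\eps|\nabla m|^2+\eps^{-1}(1-m_3^2)\ge 2|\nabla m_3|$ averaged in $x_3$ and Jensen-averaged to $\int|\nabla\overline m_3|$. Substituting into the above lower bound on $J[m]$ produces
\[
J[m]\ge \Bigl(\int_{\T^2\times(0,1)}\eps|\nabla m|^2+\eps^{-1}(1-m_3^2)\d3 - 2\int_{\T^2}|\nabla\overline m_3|\d2\Bigr) - c\,\frac{t}{\sqrt{Q-1}}\int_{\T^2}|\nabla\overline m_3|\d2.
\]
The bracketed term is the non-negative left-hand side of (iv), and $J[m]\le 0$ then gives (iv). The upper bound in (iii) follows by applying Modica-Mortola to the already-obtained bound $|J[m]|\lesssim t\ell\,e^{-2\pi\sqrt{Q-1}/t}$, and the matching lower bound follows because \eqref{eq:t-opt-gamma} forces the nonlocal term to exceed order $t\ell\,e^{-2\pi\sqrt{Q-1}/t}$, which through the logarithmic interpolation converts into $\int|\nabla\overline m_3|\gtrsim \ell\sqrt{Q-1}\,e^{-2\pi\sqrt{Q-1}/t}$.

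The decisive and most delicate ingredient is the sharp logarithmic interpolation with prefactor $2\,(1+O(t/\sqrt{Q-1}))$: this is precisely the bound that produces the scaling of $\min F_{\eps,\lambda}$ in Theorem \ref{th:SUPERcrit-F}, and it must be proved via a Fourier-side argument saturating at both the short scale $\eps$ (set by the Bloch-wall width) and the long scale $\eps^{(\lambda_c-\lambda)/\lambda}\sim \ell\sqrt{Q-1}\,e^{-2\pi\sqrt{Q-1}/t}$ (set by the typical domain size). The remaining ingredients — orthogonal $x_3$-decomposition, Poincaré absorption, and the algebraic cancellation of the leading stray-field contribution against the anisotropy via $|m|=1$ — are routine.
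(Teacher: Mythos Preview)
Your overall strategy coincides with the paper's: reduce via Lemma~\ref{le:reduction-of-E}/Theorem~\ref{th:h-m2d}, keep the positive $\partial_3$-term, apply the sharp $H^{1/2}$ estimate of Lemma~\ref{le:main_ineq}, and read off (i)--(iv) from the resulting lower bound together with \eqref{eq:t-opt-gamma}. Two points, however, need correction.

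First, the inequality you call the ``sharp logarithmic interpolation'',
\[
\frac{2\lambda}{|\log\eps|}\int_{\T^2}|\nabla^{1/2}\overline m_3|^2
\le 2\bigl(1+O(t/\sqrt{Q-1})\bigr)\int_{\T^2}|\nabla\overline m_3|,
\]
is false as a uniform a~priori bound in the supercritical regime (test it on a single Bloch wall: the left side is $\sim\frac{t}{\sqrt{Q-1}}\cdot\frac{4}{\pi}|\log\eps|=\frac{8\lambda}{\pi}>4$ when $\lambda>\lambda_c$, while the right side is $\sim 4$). What Lemma~\ref{le:main_ineq} actually gives has a \emph{self-referential} logarithm $\log\bigl(c_*/(\eps\int|\nabla\overline m_3|)\bigr)$ in front of the $BV$ term. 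This is not a cosmetic difference: it is precisely what makes the argument for (iii) non-circular. The paper substitutes $\mu:=\eps\, e^{2\pi t^{-1}\sqrt{Q-1}}\int_{\T^2}|\nabla\overline m_3|$ and obtains (cf.~\eqref{eq:J-lb-p8})
\[
J[m]\ge (1-o(1))\,D_\eps[m]+\frac{1}{2\eps t^2(Q-1)}\int|\partial_3 m|^2
-\frac{1}{\pi}\,\mu\log\Bigl(\frac{c_{**}}{\mu}\Bigr)\,t\ell\,e^{-2\pi t^{-1}\sqrt{Q-1}}.
\]
Since $\sup_{\mu>0}\mu\log(c_{**}/\mu)<\infty$, this already gives the energy lower bound; combined with \eqref{eq:t-opt-gamma} it forces $\mu\log(c_{**}/\mu)\gtrsim\gamma$, hence $\mu\sim_\gamma 1$, which is exactly (iii). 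Items (iv), (i), (ii) then follow by reading off $D_\eps[m]$, the $\partial_3$-term, and finally $\int(m_1^2+m_2^2)\le\eps(D_\eps[m]+2\int|\nabla\overline m_3|)$. Your route to the lower bound in (iii) via ``the nonlocal term must exceed\ldots'' and then converting through the interpolation is the circular version of this; replace it by the $\mu$-argument.

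Second, a minor point: the estimate of Lemma~\ref{le:main_ineq} is proved in the paper by a real-space splitting into short, intermediate and long ranges, not by a Fourier argument. Also, the paper avoids introducing $\widetilde F_{\eps,\lambda}[\overline m]$ on $\overline{B_1}$-valued maps and works directly with the three-dimensional integrals; this sidesteps the bookkeeping you would otherwise need, since $1-\overline m_3^2\neq \overline m_1^2+\overline m_2^2$ for $|\overline m|<1$.
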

We take a moment to interpret the statements (i)--(iv) in Proposition
\ref{prop:properties} above. Item $(i)$ shows that the magnetization
is approximately two-dimensional, i.e. independent of the thickness
variable. Moreover, since $|m|=1$, Item $(ii)$ means that the
magnetization is mostly perpendicular to the film (i.e.
$m \approx \pm e_3$). Furthermore, Item $(iii)$ is an estimate for the
total length of the domain walls in a unit cell. Back in the original,
physical variables, this quantity for the unit cell
$(0,L)^2 \times (0,T)$ is
\begin{align}\label{eq:oscil}
  W:= L\int_{\T^2} |\nabla \overline m_3| \d2 \lupref{eq:wl}\sim
  \frac{L^2}{l_{\rm ex}} \, e^{-\frac{2\pi l_{\rm
  ex} \sqrt{Q-1}}{T}} \sqrt{Q-1}. 
\end{align}
We expect that the stray field energy induces a repulsive interaction
of (nearest) neighboring domain walls and leads to an approximately
equidistant spacing of the walls. In view of $(iii)$, the typical
distance of neighboring walls should be
\begin{align}\label{eq:t2-i4b}
  S:= \frac{\text{length of the film}}{\#\text{ of walls on cross
  section}}\sim \frac{\ell}{\int_{\T_2} |\nabla' \overline m_3| \d2} \,
  l_{\rm ex} \lupref{eq:wl}\sim   \frac{l_{\rm ex} e^{\frac{2\pi
  l_{\rm ex} \sqrt{Q-1}}{T}}}{\sqrt{Q-1}}.
\end{align}
The exponential dependence of the typical distance between neighboring
walls on the inverse thickness in \eqref{eq:t2-i4b} was already
observed in ansatz-based computations in \cite{Kaplan1993111} for a
two-dimensional sharp interface model.  Item $(iv)$ in Proposition
\ref{prop:properties} indicates that domain walls approximate
Bloch walls of thickness proportional to
$\eps L = \frac{l_{\rm ex}}{\sqrt{Q-1}}$ for which the left hand side
of \eqref{eq:bw} is exactly zero. Note that \eqref{eq:bw} also implies
that $m$ approximately satisfies the optimal profile ODE in an
$L^2$-sense
\begin{align}\label{eq:scaling_profile_supercrit}
  \begin{aligned}
    \int_{\T^2 \times (0,1)}\left(\frac{|\nabla m_3|}{\sqrt{\ell
          \sqrt{Q - 1} \, (1-m_3^2)}} - \sqrt{\ell \sqrt{Q -
          1} \, (1-m_3^2)}\right)^2 \dx3  \\
    \lesssim \frac{t}{\sqrt{Q-1}}\int_{\T^2} |\nabla m_3| \dx2,
  \end{aligned}
\end{align}
with the convention $\frac{|\nabla m_3|}{\sqrt{1-m_3^2}}=0$ if
$|m_3|=1$.  Finally, we want to mention that the estimate of the
in-plane magnetization in Item $(i)$ is consistent with the in-plane
magnetization of a Bloch wall of length $W$ (see \eqref{eq:oscil}) and
thickness $\frac{l_{\rm ex}}{\sqrt{Q-1}}$.

Our third theorem addresses the transition where the cross-over from
constant to non-constant global minimizers occurs and which separates
the two previously described regimes.

\begin{theorem}[Critical scaling]\label{th:CRIT-E}
  Let $h_{\rm ext} = 0$ and let $\delta>0$ be as in Theorem
  \ref{th:SUPERcrit-E}. Then the following holds
    \begin{enumerate}
    \item \textbf{Cross-over of global minimizers} There are constants
      $c,C>0$ such that for $\ell,t,Q$ which satisfy
        \begin{align}
          Q>1, 
          && t \le \delta \min\left\{\sqrt{Q-1},
             \frac{1}{\sqrt{Q-1}}\right\} 
          && \text{and}&& \ell
                          \le c
                          \frac{e^{2\pi t^{-1} \sqrt{Q-1}}}{\sqrt{Q-1}} 
        \end{align}
        the renormalized energy $J$ is non-negative and
        $m\equiv\pm e_3$ are the only global minimizers, whereas for
        $\ell,t,Q$ which satisfy
        \begin{align}
          Q>1, && t \le \delta \min\left\{\sqrt{Q-1},
                  \frac{1}{\sqrt{Q-1}}\right\} && \text{and}&&                                                               \ell \ge C \frac{e^{2\pi t^{-1} \sqrt{Q-1}}}{\sqrt{Q-1}}
        \end{align}
        the minimal rescaled energy $\min J$ is strictly negative and
        minimizers cannot be constant.
      \item \textbf{$\Gamma$-convergence} For
        $\frac{t \log(\ell \sqrt{Q-1})}{\sqrt{Q-1}} = 2\pi$, $J$
        $\Gamma$-converges for $\ell\sqrt{Q-1} \to \infty$ towards
        \begin{align}
        J_*[m] =\begin{cases}
        0 &\text{if } m \in L^1(\T^2; \{{\pm e_3}\}),\\
        +\infty &\text{otherwise}.
        \end{cases}
        \end{align}
      \item \textbf{Compactness upon rescaling} For $C > 0$ and
        $\ell \sqrt{Q-1} \to \infty$, sequences with
        \begin{align}
          J[m] \leq \frac{C}{\log (\ell \sqrt{Q-1})}
        \end{align}
        are compact in $L^1(\T^2 \times (0,1))$ with a limit of the
        form $\chi_{(0,1)}\overline m$ where
        $\overline m \in BV^1(\T^2; \{{\pm e_3}\})$.
    \end{enumerate}
\end{theorem}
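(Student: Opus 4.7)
The plan is to reduce Theorem \ref{th:CRIT-E} to the analysis of the two-dimensional functional $F_{\eps,\lambda}$ in \eqref{eq:F} via the stray-field approximation announced as Theorem \ref{th:h-m2d}, which allows us to write $J[m] \approx 2\,F_{\eps,\lambda}[\overline m]$ modulo lower order terms in the thin-film regime. At the precise scaling $t\log(\ell\sqrt{Q-1})/\sqrt{Q-1} = 2\pi$, the definitions \eqref{eq:epslam} give $\lambda = \pi/2 = \lambda_c$ and $\eps = 1/(\ell\sqrt{Q-1}) \to 0$. The three items then follow from quantitative bounds on $F_{\eps,\lambda_c}$ sitting exactly at the transition between Theorems \ref{th:SUBcrit-F} and \ref{th:SUPERcrit-F}, combined with the Poincar\'e control on the deviation of $m$ from its $x_3$-average $\overline m$.

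For item (i), I would establish pre-asymptotic versions of the sharp $F$-bounds. For $\ell \le c\,e^{2\pi\sqrt{Q-1}/t}/\sqrt{Q-1}$ with $c<1$ small, one has $\lambda \le \lambda_c - \delta_0(c)$ for a uniform margin. Combining the pointwise Modica--Mortola inequality $\frac{\eps}{2}|\nabla m|^2 + \frac{1}{2\eps}(1-m_3^2) \ge |\nabla m_3|$ (valid since $|m|=1$ forces $|\nabla m|^2(1-m_3^2) \ge |\nabla m_3|^2$) with the sharp fractional estimate $\int|\nabla^{1/2}m_3|^2 \le \frac{2}{\pi}|\log\eps|\int|\nabla m_3| + o(|\log\eps|)\int|\nabla m_3|$ gives $F_{\eps,\lambda}[m] \ge (1 - \lambda/\lambda_c - o(1))\int|\nabla m_3| \ge 0$, with equality forcing $m_3 \equiv \pm 1$. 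For $\ell \ge C\,e^{2\pi\sqrt{Q-1}/t}/\sqrt{Q-1}$ with $C>1$ large, we have $\lambda \ge \lambda_c + \delta_1$, and inserting the one-wall Bloch-profile test configuration of Figure \ref{fig:ansatz} into the heuristic \eqref{eq:E_N} produces $J<0$.

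For item (ii), any non-$\{\pm e_3\}$-valued $L^1$-limit forces the exchange--anisotropy term to blow up at rate $1/\eps$, giving the $+\infty$ side of $J_*$. For $\overline m \in L^1(\T^2;\{\pm e_3\})$ the liminf $\liminf J_k[m_k] \ge 0$ follows from the lower bound of item (i) specialized to $\lambda = \lambda_c$: the leading $(1-\lambda/\lambda_c)\int|\nabla m_3|$ term vanishes, leaving only an $o(1)$ remainder. The recovery sequence is explicit: for BV $\overline m$, insert Bloch-wall profiles of width $\eps$ along each interface and extrude by $\chi_{(0,1)}$ in $x_3$, since at $\lambda = \lambda_c$ the per-wall local cost $\int|\nabla m_3|$ is exactly cancelled by the nonlocal gain $-(2\lambda_c/\pi)\int|\nabla m_3|$ to leading order; for general $m\in L^1(\T^2;\{\pm e_3\})$, approximate by BV configurations in $L^1$ and diagonalize.

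For item (iii), the sharper assumption $J[m]\le C/\log(\ell\sqrt{Q-1})$ demands a quantitative remainder: upgrading the $H^{1/2}$-estimate to $\int|\nabla^{1/2}m_3|^2 \le \frac{2}{\pi}|\log\eps|\int|\nabla m_3| + O(1)\int|\nabla m_3|$ produces $F_{\eps,\lambda_c}[m] \ge -C''/|\log\eps|\cdot\int|\nabla m_3|$, so the energy hypothesis gives $\int|\nabla\overline m_3| \lesssim 1$ uniformly, hence BV compactness of $\overline m_3$; two-dimensionality of the limit follows from Poincar\'e in the $x_3$-direction, and $\{\pm e_3\}$-valuedness from the anisotropy penalty. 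The main obstacle across all three parts is establishing the fractional estimate with the sharp leading constant $2/\pi$ and the right remainder order, since this pair of facts underlies both the exact critical cancellation in (ii) and the sub-logarithmic compactness in (iii).
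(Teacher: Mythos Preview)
Your overall strategy---reduce to the two-dimensional functional $F_{\eps,\lambda}$ via Lemma~\ref{le:reduction-of-E} and then mimic the proof of Theorem~\ref{th:crit-F}---is exactly the paper's approach, and your treatment of the recovery sequence in (ii) is fine. However, two points need correction.

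First, in item (i) the claim of a \emph{uniform} margin $\lambda\le\lambda_c-\delta_0(c)$ is false. The condition $\ell\sqrt{Q-1}\le c\,e^{2\pi\sqrt{Q-1}/t}$ gives only $\lambda_c-\lambda\ge t|\log c|/(4\sqrt{Q-1})$, and since $t/\sqrt{Q-1}=4\lambda/|\log\eps|$, this margin is of order $1/|\log\eps|$, not uniform. Fortunately the error in the fractional estimate \eqref{eq:main_ineq_1d_w} is also $O(1/|\log\eps|)$, so the two can be balanced by choosing the constant $c$ small enough relative to the $c_*$ of Lemma~\ref{le:main_ineq}; this is precisely the computation behind \eqref{eq:beta_1} in the proof of Theorem~\ref{th:crit-F}.

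Second, and more seriously, your argument for (iii) (and implicitly for the liminf in (ii)) does not work as written. From $F_{\eps,\lambda_c}[m]\ge -C''|\log\eps|^{-1}\int|\nabla m_3|$ together with $F_{\eps,\lambda_c}[m]\le C|\log\eps|^{-1}$ you \emph{cannot} deduce any upper bound on $\int|\nabla m_3|$: the lower bound has the wrong sign. The weak form of the $H^{1/2}$-estimate with an $O(1)\int|\nabla m_3|$ remainder is genuinely insufficient here. What the paper uses is the \emph{refined} estimate of Lemma~\ref{le:main_ineq}, where the logarithm is $\log\big(c_*\max\{1,\min\{(\eps\int|\nabla m_3|)^{-1},\eps^{-1}\}\}\big)$ rather than $\log(c_*/\eps)$. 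The point is that once $\int|\nabla m_3|$ is large, the logarithmic factor \emph{shrinks}, and the lower bound on $F_{\eps,\lambda_c}$ flips sign: one obtains (see \eqref{eq:lower-bound1-c})
\[
F_{\eps,\lambda_c}[m]\ \ge\ \frac{\log\!\big(\int|\nabla m_3|/c_{**}\big)}{|\log\eps|}\int_{\T^2}|\nabla m_3|,
\]
which is \emph{positive} for $\int|\nabla m_3|>c_{**}$ and yields the key a priori bound $\int|\nabla m_3|\lesssim\max\{1,|\log\eps|\,F_{\eps,\lambda_c}[m]\}$ of \eqref{eq:estos}. This self-referential feature of Lemma~\ref{le:main_ineq} is the missing ingredient in your outline; without it neither the liminf $\ge 0$ in (ii) nor the BV compactness in (iii) follows.
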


\subsection{Results for the simplified energy $F$}
\label{sec:results-for-F}

In this section, we will formulate results analogous to the ones in
the previous section, but for the reduced energy $F$. The relation
between the full energy $E$ and the reduced two-dimensional energy $F$
was explained heuristically in section \ref{sec:heuristic} and will be
made rigorous in section \ref{sec:reduction-E-to-F}. The reason to
formulate our results also in terms of $F$ is mainly expositional: We
believe that the main ideas are easier to understand when they are not
obscured by additional difficulties arising from the reduction to a
two-dimensional model and the stray-field energy approximation.

The behavior of the reduced energy $F$ in the subcritical regime is
summarized in the following theorem.
\begin{theorem}[Subcritical regime]\label{th:SUBcrit-F}
  Let $\lambda < \lambda_{c} := \frac{\pi}{2}$ and $F_{\eps,\lambda}$
  as defined in \eqref{eq:F}. Then the following holds
  \begin{enumerate}
  \item \textbf{Compactness:} Every sequence $(m_\eps)$ in
    $H^{1}(\T^2; \S^2)$ with
    \begin{align}
      {\limsup_{\eps \to 0} F_{\eps, \lambda}[m_\eps] < + \infty}
    \end{align}
    converges in $L^1(\T^2)$ (up to extracting a subsequence) towards
    a limit in $BV(\T^2;\{\pm e_3\})$.
  \item \textbf{$\Gamma$-convergence:} As $\eps \to 0$, the family of
    functionals $\{F_{\eps,\lambda}\}$ $\Gamma$-converges with respect
    to the $L^1(\T^2)$-topology towards $F_{*, \lambda}$, given by
    \begin{align}\label{eq:F_st}
      F_{*,\lambda}[m] = \left\{
        \begin{array}{ll}
          \left(1 - \frac{\lambda}{\lambda_{c}} \right)\int_{\T^2}
          |\nabla m_3|\d2 & \text{ for } m \in BV(\T^2;\{\pm e_3\})\\ 
          +\infty & \text{ otherwise.}
        \end{array}
        \right.
    \end{align}
\end{enumerate}
\end{theorem}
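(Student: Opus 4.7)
The plan is to reduce both parts of Theorem \ref{th:SUBcrit-F} to a single sharp upper bound for the nonlocal seminorm by the Modica-Mortola part: for every $m \in H^1(\T^2;\S^2)$ and $\eps \in (0,\tfrac12)$,
\begin{equation}\label{eq:master-plan}
\int_{\T^2}|\nabla^{1/2}m_3|^2\d2 \;\le\; \Bigl(\tfrac{1}{\lambda_c}+o_\eps(1)\Bigr)|\log\eps|\,M_\eps[m],
\qquad M_\eps[m] := \int_{\T^2}\Bigl(\tfrac{\eps}{2}|\nabla m|^2 + \tfrac{1-m_3^2}{2\eps}\Bigr)\d2,
\end{equation}
with $o_\eps(1)\to 0$ uniformly in $m$. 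The sharp constant $\lambda_c=\pi/2$ is forced by the asymptotic $H^{1/2}$-to-BV ratio of the one-dimensional Bloch wall, which yields $\int|\nabla^{1/2}m_3|^2\approx\tfrac{2}{\pi}|\log\eps|\int|\nabla m_3^*|\d2$, as in the heuristic of Section \ref{sec:heuristic}.

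\textbf{Compactness and liminf.} Inserting \eqref{eq:master-plan} into the definition of $F_{\eps,\lambda}$ gives $F_{\eps,\lambda}[m]\ge (1-\lambda/\lambda_c-o_\eps(1))\,M_\eps[m]$. Since $\lambda<\lambda_c$, any sequence $(m_\eps)$ with $\limsup F_{\eps,\lambda}[m_\eps]<\infty$ has uniformly bounded Modica-Mortola energy, and the standard $\S^2$-constrained Modica-Mortola compactness argument (applied to $m_{3,\eps}$ via coarea) then yields a subsequence and $m^*\in BV(\T^2;\{\pm e_3\})$ with $m_\eps\to m^*$ in $L^1(\T^2)$. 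The same bound drives the liminf:
\[
\liminf_{\eps\to 0}F_{\eps,\lambda}[m_\eps] \;\ge\; \Bigl(1-\tfrac{\lambda}{\lambda_c}\Bigr)\liminf_{\eps\to 0}M_\eps[m_\eps] \;\ge\; \Bigl(1-\tfrac{\lambda}{\lambda_c}\Bigr)\int_{\T^2}|\nabla m^*_3|\d2,
\]
the final step being the classical Modica-Mortola lower bound, e.g.\ \cite{ABV}.

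\textbf{Recovery sequence.} By BV-density of configurations with polygonal (or smooth) jump sets and a diagonal extraction, it suffices to construct a recovery sequence for $m^*\in BV(\T^2;\{\pm e_3\})$ whose jump set $J$ is a finite union of smooth closed curves. In a tubular neighborhood of $J$ of width $\eps|\log\eps|$, paste the classical one-dimensional Bloch profile $\phi(t)=2\arctan e^t$ by setting $m_\eps(x)=(\sin\phi(s(x)/\eps),0,\cos\phi(s(x)/\eps))$, where $s$ is the signed distance to $J$ (with a consistent normal orientation), and set $m_\eps=m^*$ outside the neighborhood. The standard Modica-Mortola computation gives $M_\eps[m_\eps]\to\int_{\T^2}|\nabla m^*_3|\d2$, while the explicit one-dimensional evaluation already performed in the heuristic (and which I expect to be packaged into Lemma \ref{le:opt}) yields
\[
\int_{\T^2}|\nabla^{1/2}(m_\eps)_3|^2\d2 \;=\; \lambda_c^{-1}|\log\eps|\int_{\T^2}|\nabla m^*_3|\d2 + o(|\log\eps|).
\]
Subtracting gives $F_{\eps,\lambda}[m_\eps]\to (1-\lambda/\lambda_c)\int_{\T^2}|\nabla m^*_3|\d2 = F_{*,\lambda}[m^*]$, as required.

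\textbf{Main obstacle.} The entire argument hinges on \eqref{eq:master-plan}, and this is where all the work goes. Using the Fourier representation $\int|\nabla^{1/2}m_3|^2=\ell^{-2}\sum_{k\ne 0}|k||\widehat{(m_3)}_k|^2$, the $|\log\eps|$ factor is produced by a dyadic decomposition of the frequency range $|k|\in[\ell^{-1},\eps^{-1}]$: on each dyadic shell of scale $R$ one uses $|k|\le\tfrac12(R^{-1}|k|^2+R)$, transferring $|k|^2$ to the exchange energy and $1$ to the $L^2$ mass controlled by the anisotropy $\int(1-m_3^2)$ together with the constraint $|m_3|\le 1$; summing the $O(|\log\eps|)$ scales recovers the logarithm. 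The delicate point is to identify the correct numerical constant $1/\lambda_c=2/\pi$, which does not come out of a naive AM-GM and instead requires matching the precise Bloch-wall optimizer together with careful absorption of the low-frequency ($|k|\sim\ell^{-1}$) and high-frequency ($|k|\sim\eps^{-1}$) boundary contributions into the $o_\eps(1)M_\eps$ error term. Everything else in the proof is routine Modica-Mortola theory.
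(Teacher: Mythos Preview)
Your overall architecture matches the paper exactly: the liminf and compactness are derived from a single master inequality of the form \eqref{eq:master-plan}, followed by the standard Modica--Mortola arguments; the recovery sequence is built by pasting one-dimensional Bloch-type profiles along the signed distance function to a smooth jump set, and the $H^{1/2}$ contribution is extracted from the one-dimensional profile computation (your guess about Lemma~\ref{le:opt} is correct).

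The gap is in the ``main obstacle'' paragraph. Your proposed Fourier-side dyadic AM--GM does not produce the sharp constant $2/\pi$, and your suggested fix---``matching the precise Bloch-wall optimizer''---only shows that $2/\pi$ is a lower bound for the best constant in \eqref{eq:master-plan}; it gives no mechanism for the upper bound. There is also a mismatch in your low-frequency step: you propose to control the $L^2$ mass of $m_3$ by the anisotropy $\int(1-m_3^2)$, but these quantities are complementary (one is large exactly when the other is small), so that route is blocked. The paper's Lemma~\ref{le:main_ineq} avoids Fourier space entirely. It uses the real-space kernel representation
\[
\int_{\T^2}|\nabla^{1/2}f|^2\d2=\frac{1}{4\pi}\int_{\T^2}\int_{\R^2}\frac{|f(x+z)-f(x)|^2}{|z|^3}\dz2\dx2
\]
and splits the $z$-integral into three annuli $|z|<r$, $r<|z|<R$, $|z|>R$. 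The inner annulus is controlled by $r\int|\nabla f|^2$; the outer annulus by $\|f\|_\infty^2/R$. The sharp constant comes from the middle annulus, where one writes $|f(x+z)-f(x)|^2\le 2\|f\|_\infty|f(x+z)-f(x)|$, applies the fundamental theorem of calculus, and evaluates $\int_{r<|z|<R}|z|^{-3}|\nu\cdot z|\dz2=4\log(R/r)$ explicitly in polar coordinates. Dividing by $4\pi$ gives exactly $\tfrac{2}{\pi}\log(R/r)\|f\|_\infty\int|\nabla f|$, and then the Modica--Mortola pointwise inequality $|\nabla m_3|\le M_\eps$-density converts this into \eqref{eq:master-plan}. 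This $L^\infty$--$BV$ interpolation in real space, not a Fourier dyadic argument, is the missing idea.
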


The next theorem is concerned with the minimal energy and the
structure of low energy states in the supercritical regime.
\begin{theorem}[Supercritical regime]\label{th:SUPERcrit-F}
  Let $\lambda_{c} := \frac{\pi}{2}$ and $F_{\eps,\lambda}$ as defined
  in \eqref{eq:F}. There are constants $\delta<1<K$ such that for
\begin{align}\label{eq:SUPERcrit-F-regime-th}
  0<\eps < K^{-\frac{\lambda}{\lambda - \lambda_c}}\qquad \text{and}
  \qquad \lambda_c < \lambda < \delta|\log \eps|,  
\end{align}
the minimal energy of the family of functionals
$\{F_{\eps, \lambda}\}$ satisfies
\begin{align}\label{eq:scaling_supercrit}
  -C\, \frac{\lambda \eps^{\frac{\lambda_c - \lambda}{\lambda}}}{|\log
  \eps|} \le \min F_{\eps,\lambda} \le -c\, \frac{\lambda
  \eps^{\frac{\lambda_c - \lambda}{\lambda}}}{|\log \eps|} 
\end{align}
for some universal constants $0<c<C$. Moreover, the profiles achieving
the optimal scaling can be characterized as follows. For any
$\gamma >0$ and all $m \in H^1(\T^2;\S^2)$ which satisfy
\begin{align}\label{eq:scaling-opt-th}
  F_{\eps, \lambda}[m] \le -\gamma \, \frac{\lambda
  \eps^{\frac{\lambda_c - \lambda}{\lambda}}}{|\log \eps|} , 
\end{align}
the quantities
\begin{align}\label{eq:scaling_var_supercrit}
    \begin{aligned}
      \int_{\T^2} |\nabla m_{3}| \dx2\le \int_{\T^2} \left(
        \frac{\eps}{2} |\nabla m|^2 + \frac{1-m_{3}^2}{2\eps} \right)
      \dx2 \le \frac{\lambda}{|\log \eps|} \int_{\T^2}|\nabla^{1/2}
      m_{3}|^2 \dx2
    \end{aligned}
\end{align}
agree to the leading order and scale as
$\eps^{\frac{\lambda_c - \lambda}{\lambda}}$, i.e. if $A$ and $B$ are
any of the three quantities in \eqref{eq:scaling_var_supercrit}, we
have
\begin{align}\label{eq:scaling_var_supercrit_2}
  c_{\gamma} \eps^{\frac{\lambda_c - \lambda}{\lambda}} \le A  \le
  C_{\gamma} \eps^{\frac{\lambda_c - \lambda}{\lambda}} \qquad
  \text{and} \qquad |A-B| \le \tilde C_{\gamma} \frac{\lambda}{|\log
  \eps|}A, 
\end{align}
for some positive constants $c_\gamma, C_\gamma$ and $\tilde C_\gamma$
which depend only on $\gamma$.
\end{theorem}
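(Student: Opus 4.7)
The plan is to prove the scaling \eqref{eq:scaling_supercrit} and the characterization \eqref{eq:scaling_var_supercrit}--\eqref{eq:scaling_var_supercrit_2} by combining two asymptotically sharp inequalities with a careful one-dimensional construction. Throughout I would abbreviate
\begin{align*}
A := \int_{\T^2}|\nabla m_3|\dx2, \quad B := \int_{\T^2}\!\left(\tfrac{\eps}{2}|\nabla m|^2 + \tfrac{1}{2\eps}(1-m_3^2)\right)\!\dx2, \quad D := \int_{\T^2}|\nabla^{1/2}m_3|^2 \dx2,
\end{align*}
so that $F_{\eps,\lambda}[m]=B-(\lambda/|\log\eps|)D$. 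The first ingredient is the Modica--Mortola-type bound $B \ge A$ for $m\in H^1(\T^2;\S^2)$, which follows from $|\nabla m|^2\ge |\nabla m_3|^2/(1-m_3^2)$ and AM--GM, with equality iff $\eps|\nabla m_3|=1-m_3^2$ (the $1$-D Bloch wall profile). The second, truly nonlocal ingredient is a logarithmic estimate of the form
\begin{align*}
D \;\le\; \tfrac{1}{\lambda_c}\,A\,\log\!\Big(\tfrac{C}{\eps A}\Big)\;+\;\text{(lower order in }B,\,A,\,\eps\text{)},
\end{align*}
asymptotically saturated by stripe patterns of period $\sim 1/A$ and wall width $\sim \eps$. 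The sharp prefactor $1/\lambda_c=2/\pi$ is essential and is what makes the threshold $\lambda_c$ appear.

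For the upper bound in \eqref{eq:scaling_supercrit} I would evaluate $F_{\eps,\lambda}$ on a one-dimensional stripe ansatz of $N$ optimal Bloch walls of width $\eps$. Using Lemma \ref{le:opt} on the nonlocal term and explicit Bloch-wall profiles on the local part, one obtains (cf.\ the heuristic \eqref{eq:E_N})
\begin{align*}
F_{\eps,\lambda}[m_N]\;\approx\;4N\!\left(1-\tfrac{\lambda}{\lambda_c}\right)+\tfrac{4\lambda}{\lambda_c|\log\eps|}\,N\log N,
\end{align*}
whose minimum over $N\in\N$ is attained at $\log N_*\approx|\log\eps|(\lambda-\lambda_c)/\lambda$, i.e.\ $N_*\sim\eps^{(\lambda_c-\lambda)/\lambda}$, with value $\sim -\lambda\eps^{(\lambda_c-\lambda)/\lambda}/|\log\eps|$; the assumption $\eps<K^{-\lambda/(\lambda-\lambda_c)}$ ensures $N_*\ge 1$ and $\eps N_*\ll 1$ so that the logarithm has the right sign. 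For the matching lower bound I insert $B\ge A$ and the key nonlocal estimate into $F_{\eps,\lambda}[m]$ to get
\begin{align*}
F_{\eps,\lambda}[m]\;\ge\;\Big(1-\tfrac{\lambda}{\lambda_c}\Big)A\;+\;\tfrac{\lambda}{\lambda_c|\log\eps|}\,A\log A\;-\;\text{remainder},
\end{align*}
and minimizing the right-hand side over $A>0$ reproduces exactly the critical value $A_*\sim\eps^{(\lambda_c-\lambda)/\lambda}$ and the minimum $\sim -\lambda\eps^{(\lambda_c-\lambda)/\lambda}/|\log\eps|$, closing the two-sided scaling bound. The characterization of near-optima then comes by running the lower-bound computation as a "slack accounting": if $F_{\eps,\lambda}[m]\le -\gamma\lambda\eps^{(\lambda_c-\lambda)/\lambda}/|\log\eps|$, then (a) $A$ is forced into the critical window, giving the bounds on $A$ in \eqref{eq:scaling_var_supercrit_2}; (b) the Modica--Mortola gap $B-A$ is at most $\mathrm{const}_\gamma\cdot(\lambda/|\log\eps|)A$, else substitution violates the hypothesis; (c) the nonlocal gap $(\lambda/|\log\eps|)D-B$ is controlled analogously. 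Together these yield the ordering \eqref{eq:scaling_var_supercrit} and the error estimate in \eqref{eq:scaling_var_supercrit_2}.

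The main obstacle, by far, will be proving the nonlocal estimate with the sharp constant $1/\lambda_c$. A plain Cauchy--Schwarz or Morrey-type argument recovers the scaling but not the prefactor, and without the latter one cannot see the threshold $\lambda_c=\pi/2$. My approach would be to use the real-space representation $\int|\nabla^{1/2}m_3|^2 = \frac{1}{4\pi}\iint|m_3(x+y)-m_3(x)|^2/|y|^3\,dy\,dx$ and split it over two scales: for $|y|\lesssim \eps$ the difference quotient is controlled by the exchange-and-anisotropy energy $B$, whereas for $|y|\gtrsim \eps$ one may replace $m_3$ by its sharp-interface profile at cost $\lesssim B-A$ and then estimate the resulting jump-type integral by $A$. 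Integrating $|y|^{-3}$ over the annulus $\{\eps\lesssim|y|\lesssim 1/A\}$ produces exactly the factor $\log(1/(\eps A))$, and a careful comparison with the $1$-D step function $\mathrm{sgn}(y_1)$ will pin down the constant $1/\lambda_c=2/\pi$. Delicate bookkeeping will be needed to handle the borderline regime $\lambda$ close to $\delta|\log\eps|$, where the logarithmic remainder could otherwise dominate the leading term; here the Fourier-side representation and the multiplier $\sigma(t|k|)$ from \eqref{eq:h_split_intro} should allow a clean control of the error.
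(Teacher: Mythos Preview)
Your proposal is correct and follows essentially the same strategy as the paper: the upper bound via a stripe ansatz with $N\sim\eps^{(\lambda_c-\lambda)/\lambda}$ Bloch walls (Lemma~\ref{le:F-SUPERcrit-ub}), and the lower bound plus characterization via the sharp $H^{1/2}$-estimate of Lemma~\ref{le:main_ineq} combined with the Modica--Mortola bound $B\ge A$ and a minimization in $\mu:=\eps^{(\lambda-\lambda_c)/\lambda}A$ (Lemma~\ref{le:F-SUPERcrit-lb}). Minor deviations: the paper proves Lemma~\ref{le:main_ineq} with a three-scale real-space split and obtains the intermediate-scale constant $2/\pi$ directly from $|f(x+z)-f(x)|\le\int_0^1|\nabla f(x+sz)\cdot z|\,ds$ rather than a sharp-interface replacement, and the borderline regime $\lambda$ near $\delta|\log\eps|$ is handled by that same real-space estimate---the Fourier multiplier $\sigma$ from \eqref{eq:h_split_intro} belongs to the stray-field reduction and plays no role here.
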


Under the assumptions of Theorem \ref{th:SUPERcrit-F}, statements
analogous to \eqref{eq:t2-i1} -- \eqref{eq:bw} in Proposition
\ref{prop:properties} hold as well, they are simple consequences of
the stronger statement \eqref{eq:scaling_var_supercrit_2}.

The next theorem addresses the structure of minimizers in a
neighborhood of the transition.

\begin{theorem}[Critical scaling]\label{th:crit-F}
  Let $\lambda_{c} := \frac{\pi}{2}$ and $F_{\eps,\lambda_c}$ as
  defined in \eqref{eq:F}. Then the following holds
  \begin{enumerate}
  \item \textbf{Cross-over of global minimizers: }There are two
    constants $0<\beta_1<1<\beta_2$ such that for
        \begin{align}\label{eq:beta_1}
          \lambda \le \lambda_-(\eps):=\lambda_c\left(1-\frac{|\log
          \beta_1|}{|\log \eps|}\right) 
        \end{align}
        the minimal energy $\min F_{\eps,\lambda}$ is zero and only
        attained by the constant configurations $m \equiv \pm e_3$,
        whereas for
        \begin{align}\label{eq:beta_2}
          \lambda \ge \lambda_+(\eps):= \lambda_c\left(1+\frac{|\log
          \beta_2|}{|\log \eps|}\right) 
        \end{align}
        the minimal energy is strictly negative and minimizers cannot
        be constant.
      \item \textbf{$\Gamma$-convergence:} As $\eps \to 0$, the family
        of functionals $\{F_{\eps,\lambda_c}\}$ $\Gamma$-converges
        with respect to the $L^1(\T^2)$-topology towards
        $F_{*, \lambda_c}$, given by
        \begin{align}\label{eq:F_st_crit}
          F_{*,\lambda_c}[m] = \begin{cases}
            0, & \text{if }m \in L^1(\T^2; \{\pm e_3\})\\
            +\infty & \text{otherwise},
        \end{cases}
        \end{align}
      \item \textbf{Lack of compactness:} There is a sequence
        $(m_\eps)$ in $H^{1}(\T^2; \S^2)$ with
        \begin{align}
        \limsup_{\eps \to 0} F_{\eps, \lambda_c}[m_\eps] \to 0
        \end{align}
        which is not precompact in $L^1(\T^2)$.
      \item \textbf{Compactness upon rescaling:} For every $C > 0$,
        every sequence $(m_{\eps})$ with
        \begin{align}
          F_{\eps,\lambda_c}[m_\eps] \leq C |\log \eps|^{-1}
        \end{align}
        converges in $L^1(\T^2)$ (up to extracting a subsequence) to a
        limit in $BV(\T^2;\{\pm e_3\})$.
    \end{enumerate}
\end{theorem}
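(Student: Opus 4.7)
The proof rests on a refined nonlocal estimate bounding $\int_{\T^2}|\nabla^{1/2} m_3|^2\d2$ by the local Modica-Mortola energy $A(m) := \int_{\T^2}\bigl(\tfrac{\eps}{2}|\nabla m|^2 + \tfrac{1}{2\eps}(1-m_3^2)\bigr)\d2$. Leveraging Lemma \ref{le:main_ineq} and the optimal profile computation of Lemma \ref{le:opt}, the plan is to establish a master inequality of the form
\begin{align*}
\tfrac{\lambda_c}{|\log \eps|}\int_{\T^2}|\nabla^{1/2} m_3|^2\d2 \;\le\; A(m) \;-\; \tfrac{A(m)\log(1+A(m))}{|\log\eps|} \;+\; \tfrac{C\,A(m)}{|\log\eps|},
\end{align*}
where the concave subtraction is forced by the stripe heuristic in Section \ref{sec:heuristic}: for $N$ walls one has $A\sim 2N$ and $\int|\nabla^{1/2} m_3|^2\sim\tfrac{2}{\pi}(|\log\eps|-\log N)\cdot 2N$, with $\tfrac{\lambda_c}{|\log\eps|}\cdot\tfrac{2}{\pi}|\log\eps|=1$ at the critical scaling. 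Combining with the pointwise Modica-Mortola identity $A(m)\ge\int|\nabla m_3|\d2$ (valid on $\S^2$ for any $\eps>0$), this yields the master lower bound
\begin{align*}
F_{\eps,\lambda_c}[m] \;\ge\; \tfrac{A(m)\,[\log(1+A(m))-C]}{|\log\eps|}.
\end{align*}

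Parts (iv) and the $\liminf$ half of (ii) then follow directly. For (iv), $F_{\eps,\lambda_c}[m_\eps]\le C_0/|\log\eps|$ forces $A(m_\eps)\log(1+A(m_\eps))\le C_1$, hence $A(m_\eps)\le C_2$; the M-M inequality supplies a uniform BV-bound on $(m_\eps)_3$, while $\tfrac{1}{2\eps}\int(1-m_3^2)\le A(m_\eps)$ forces any $L^1$-subsequential limit into $\{\pm e_3\}$. For the $\liminf$ in (ii), if $m_\eps\to m$ in $L^1$ with $m$ not $\{\pm e_3\}$-valued a.e., then $\int(1-m_3^2)\d2>0$ and $L^2$-convergence (by dominated convergence, since $|m_\eps|\le 1$) push $A(m_\eps)\gtrsim 1/\eps$, whereupon the master lower bound sends $F\to+\infty$; otherwise the same bound gives $\liminf F\ge 0$ directly.

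For the recovery sequence in (ii) and the non-compact sequence in (iii), I rely on the classical Modica-Mortola optimal Bloch profile. Given $m\in BV(\T^2;\{\pm e_3\})$, the standard ansatz with $\eps$-thick Bloch transition around $\{m_3=0\}$ produces $m_\eps$ with $A(m_\eps)\to\int|\nabla m_3|\d2$, while a Lemma \ref{le:opt}-type computation gives $\tfrac{\lambda_c}{|\log\eps|}\int|\nabla^{1/2}(m_\eps)_3|^2\d2\to\int|\nabla m_3|\d2$ as well; the two contributions cancel at $\lambda_c$ and $F_{\eps,\lambda_c}[m_\eps]\to 0$. For general $m\in L^1(\T^2;\{\pm e_3\})$, I approximate in $L^1$ by BV sets and diagonalize. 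Part (iii) uses the same construction with $N_\eps\to\infty$ parallel stripes chosen so that $N_\eps\log N_\eps=o(|\log\eps|)$, e.g.~$N_\eps=\lceil\log|\log\eps|\rceil$: the energy scales as $N_\eps\log N_\eps/|\log\eps|\to 0$, while the oscillation on scale $1/N_\eps\to 0$ rules out $L^1$-precompactness.

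Part (i) is a quantitative refinement of these arguments. For $\lambda\ge\lambda_+(\eps)=\lambda_c(1+|\log\beta_2|/|\log\eps|)$, I insert a single-wall Bloch ansatz into $F_{\eps,\lambda}$ and obtain a leading-order energy $\approx -|\log\beta_2|\int|\nabla m_3|\d2/|\log\eps|<0$, establishing $\min F_{\eps,\lambda}<0$ and ruling out constant minimizers. For $\lambda\le\lambda_-(\eps)=\lambda_c(1-|\log\beta_1|/|\log\eps|)$, substituting $\lambda$ for $\lambda_c$ in the master inequality and using $1-\lambda/\lambda_c\ge|\log\beta_1|/|\log\eps|$ and $\lambda/\lambda_c\le 1$ yields
\begin{align*}
F_{\eps,\lambda}[m] \;\ge\; \tfrac{(|\log\beta_1|-C)A(m)}{|\log\eps|} \;+\; \tfrac{\lambda}{\lambda_c}\tfrac{A(m)\log(1+A(m))}{|\log\eps|},
\end{align*}
which is nonnegative once $\beta_1\le e^{-C}$ and strictly positive whenever $A(m)>0$; hence $m\equiv\pm e_3$ are the unique minimizers. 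The main obstacle throughout is deriving the refined nonlocal estimate with the correct concave correction and controlled remainder; once this ingredient is in place, every statement of the theorem reduces to a careful unfolding of the master inequality.
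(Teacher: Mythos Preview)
Your overall strategy matches the paper's closely: Lemma~\ref{le:main_ineq} for lower bounds, the Bloch-wall/stripe construction of Lemma~\ref{le:opt} for upper bounds and for (iii), with $N_\eps\sim\log|\log\eps|$ exactly as the paper chooses. The recovery sequence argument for (ii) and the diagonalization over $BV$ approximants are also what the paper does (in Lemma~\ref{le:constr}).

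There is, however, a genuine gap in your ``master inequality.'' It is \emph{false} as stated: take any $m\in H^1(\T^2;\S^2)$ with $m_3\equiv 0$ and rapid in-plane rotations. Then $\int_{\T^2}|\nabla^{1/2}m_3|^2\d2=0$, while $A(m)\ge\tfrac{1}{2\eps}$ and can be made arbitrarily large. For $A(m)\gg 1/\eps$ your right-hand side $A-\tfrac{A\log(1+A)}{|\log\eps|}+\tfrac{CA}{|\log\eps|}$ is negative, contradicting the nonnegative left-hand side. Equivalently, your master lower bound $F\ge \tfrac{A[\log(1+A)-C]}{|\log\eps|}$ fails there, since $F=A$ but the right-hand side exceeds $A$ once $\log A>|\log\eps|+C$. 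The issue is that Lemma~\ref{le:main_ineq} controls $\int|\nabla^{1/2}m_3|^2$ in terms of $\|m_3\|_\infty$, $\int|\nabla m_3|^2$, and $\int|\nabla m_3|$---not in terms of the full $A(m)$, which also contains $|\nabla m'|^2$.

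The paper avoids this by working with $B:=\int_{\T^2}|\nabla m_3|\d2$ instead of $A$. From Lemma~\ref{le:main_ineq} and \eqref{eq:A2} one gets directly (this is the paper's \eqref{eq:estos})
\[
B \ \lesssim\ \max\bigl\{1,\ |\log\eps|\,F_{\eps,\lambda_c}[m]\bigr\},
\]
which immediately gives the $BV$-bound on $m_{\eps,3}$ in (iv). The $\{\pm e_3\}$-valuedness of the limit is then handled separately via
\[
\int_{\T^2}(1-m_{\eps,3}^2)\d2 \ \lesssim\ \eps\bigl(1+|\log\eps|\,F_{\eps,\lambda_c}[m_\eps]\bigr),
\]
see \eqref{eq:noop}, rather than through a bound on $A$. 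Your argument can be repaired by replacing $A$ with $B$ throughout the lower-bound part, or by adding a case split (if $A\ge 2B$ then $F\gtrsim A$ trivially; otherwise $A\le 2B$ and the $B$-bound suffices), but as written the master inequality does not hold and the chain of implications in (iv) and the $\liminf$ of (ii) breaks at that step.
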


Theorem \ref{th:crit-F} suggests that $|\log \eps|F_{\eps,\lambda_c}$
is the appropriate rescaling for the critical case. Unfortunately, it
seems not possible to obtain the $\Gamma$-limit of
${|\log \eps|}F_{\eps,\lambda_c}$ with our $H^{1/2}$-estimate
\eqref{eq:main_ineq_1d} of the following section, because the constant
$c_*$ there is not optimal.

We illustrate our results in a phase diagram (Figure \ref{fig:pd}). It
is not difficult to see that for each $0 < \eps < 1$ there is a sharp
threshold value $\lambda = \lambda_c(\eps) > 0$ at which a transition
from monodomain ($m \equiv +e_3$ or $m \equiv -e_3$) to multidomain
($m \not\equiv const$) states as global energy minimizers occurs, with
$\lambda_c(\eps)$ a Lipschitz-continuous function on
$[\delta, 1 - \delta]$ for every $0 < \delta < \frac12$ (for the
reader's convenience, a proof of this fact is presented in Lemma
\ref{le:cont-lambda} in the appendix).
While we do not know the precise value of $\lambda_c(\eps)$ for
$\eps > 0$, we show in Theorem \ref{th:crit-F} that
$\lambda_-(\eps) \le \lambda_c(\eps) \le \lambda_+(\eps)$ and
$\lim_{\eps \to 0} \lambda_c(\eps)= \frac{\pi}{2}$, i.e. the
definition above agrees with
$\lambda_c := \lambda_c(0) = \frac{\pi}{2}$. Furthermore, global
minimizers $m_{(\eps,\lambda)}$ of $F_{\eps,\lambda}$ with
$(\eps,\lambda)$ between the red (dashed) curves of the form
$\lambda(\eps) = \lambda_c + \gamma |\log \eps|^{-1}$ satisfy a
uniform bound of the form
$c \le \int_{\T^2}|\nabla m_{(\eps,\lambda),3}| \d2 \le C$, with
constants $C > c > 0$ depending only on the values of $\gamma > 0$ for
these curves.

\begin{figure}
  \centering\includegraphics[width=0.7\linewidth]{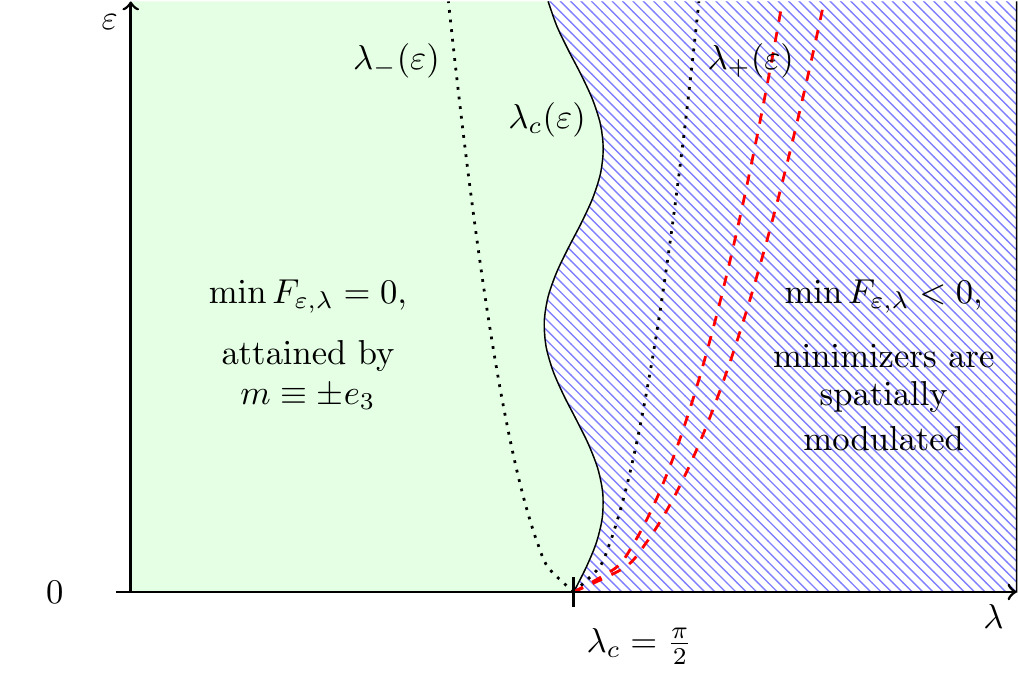}
    \caption{Sketch of the phase diagram for minimizers of
      $F_{\eps,\lambda}$ in terms of $\lambda > 0$ and
      $0 < \eps \ll 1$.}
    \label{fig:pd}
\end{figure}

\section{A bound on the homogeneous $H^{1/2}$-norm}
\label{sec:H12-bounds}

Since all three terms in $F$ contribute in highest order to the limit,
it is important to estimate the negative term
$\int_{\T^2}|\nabla^{1/2}m_3|^2 \d2$ with precise leading order
constant. In this section we will establish an upper bound for the
homogeneous $H^{1/2}$-norm which is the key ingredient for the lower
bounds (recall that the $H^{1/2}$-term occurs in the energy with a
negative sign).

We will prove the following
\begin{lem}\label{le:main_ineq}
  There is a universal constant $c_* \geq 1$ such that for every
  $f \in C^\infty(\T^2)$ and every $\eps > 0$ we have
	\begin{align}\label{eq:main_ineq_1d}
		\begin{aligned}
                  &\int_{\T^2} |\nabla^{1/2} f|^2\d2 \le
                  \frac{\eps}{2} \int_{\T^2}|\nabla f|^2 \d2\\
                  &\qquad +\frac{2}{\pi}\log
                  \left(c_*\max\left\{1,\min\left\{\frac{\|f\|_\infty}{\eps
                          \int_{\T^2}|\nabla f| \dd x},
                        \frac{1}{\eps}\right\}\right\}\right)\,
                  \|f\|_{\infty} \int_{\T^2}|\nabla f|\d2.
		\end{aligned}
	\end{align}
\end{lem}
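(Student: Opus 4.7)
The plan is to combine the real-space representation of the homogeneous $H^{1/2}$-seminorm recalled in the introduction with three scale-adapted bounds on the increment quantity $G(y):=\int_{\T^2}|f(x+y)-f(x)|^2\d2$. Writing $M=\|f\|_\infty$, $A=\int_{\T^2}|\nabla f|^2\d2$, $B=\int_{\T^2}|\nabla f|\d2$, and introducing the angular average $\tilde G(r):=\tfrac{1}{2\pi}\int_0^{2\pi}G(re^{i\theta})\dd\theta$, the $H^{1/2}$ integral becomes
\[
\int_{\T^2}|\nabla^{1/2}f|^2\d2 \;=\; \frac{1}{2}\int_0^\infty \frac{\tilde G(r)}{r^2}\dd r,
\]
and the problem reduces to bounding $\tilde G(r)$ in each of three radial regimes.

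The three bounds I would establish are: (I) $\tilde G(r)\le r^2 A$, via the fundamental theorem of calculus and translation invariance on $\T^2$; (II) $\tilde G(r)\le \tfrac{4}{\pi}MrB$, obtained from the pointwise estimate $|f(x+y)-f(x)|^2\le 2M\int_0^1|y\cdot\nabla f(x+ty)|\dd t$ combined with the angular average $\tfrac{1}{2\pi}\int_0^{2\pi}|\cos\theta|\dd\theta=\tfrac{2}{\pi}$ --- this is exactly where the sharp prefactor of the lemma originates; (III) $\tilde G(r)\le \min\{4C_P MB,\,8M^2\}$, obtained from $\int_{\T^2}|f(x+y)-f(x)|\d2\le 2\|f-\bar f\|_{L^1}$ together with the Poincaré inequality $\|f-\bar f\|_{L^1}\le C_P B$ on $\T^2$ and the trivial bound $\|f-\bar f\|_{L^1}\le 2M$.

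Next I would split $[0,\infty)$ into $[0,\eps]\cup[\eps,r_3]\cup[r_3,\infty)$, where the crossover radius $r_3:=\min\{\pi C_P,\,2\pi M/B\}$ is chosen so that (II) meets (III) at $r=r_3$. A direct computation yields contributions $\tfrac{\eps A}{2}$, $\tfrac{2MB}{\pi}\log(r_3/\eps)$, and (exactly) $\tfrac{2MB}{\pi}$ from the three ranges, giving
\[
\int_{\T^2}|\nabla^{1/2}f|^2\d2 \;\le\; \frac{\eps A}{2}+\frac{2MB}{\pi}\log\!\bigl(e\, r_3/\eps\bigr).
\]
In the degenerate case $\eps>r_3$, I would skip the middle range and use only (I) and (III), which gives the same bound but with the log argument bounded by a universal constant; together this amounts to replacing $r_3/\eps$ by $\max\{1,r_3/\eps\}$. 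The elementary estimate $r_3/\eps\le \pi\max(C_P,2)\min\{1/\eps,\,M/(B\eps)\}$ then repackages the log argument in the form of the lemma with $c_*=e\pi\max(C_P,2)$.

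The main obstacle is obtaining the sharp prefactor $\tfrac{2}{\pi}$ in front of the logarithm. A naive real-space splitting using $G(y)\le 2|y|MB$ directly gives prefactor $1$ instead, off by a factor of $\pi/2$; since this estimate feeds into the lower bound at the critical value $\lambda_c=\pi/2$ of Theorem \ref{th:SUBcrit-F}, sharpness of the constant is essential. Both the angular-average improvement in (II) and the matched choice of crossover $r_3$ --- which produces the tail contribution as the \emph{constant} $\tfrac{2MB}{\pi}$ rather than another logarithmic term --- are needed to land on the correct value.
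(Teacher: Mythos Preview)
Your proposal is correct and follows essentially the same approach as the paper: both use the real-space representation of the $H^{1/2}$-seminorm, split the radial integral into three ranges, and obtain the sharp prefactor $\tfrac{2}{\pi}$ from the angular integral $\int_0^{2\pi}|\cos\theta|\,d\theta=4$ in the intermediate range. The only cosmetic differences are that you package the argument via the angular average $\tilde G(r)$ and invoke an abstract Poincar\'e inequality for the tail bound (III), whereas the paper works directly with the double integral and uses the explicit translation estimate $\int_{\T^2}|f(x+z)-f(x)|\,dx\le\tfrac12\int_{\T^2}|\nabla f|$ (valid for $z$ in the fundamental cell) in place of Poincar\'e; your choice of crossover $r_3$ and the paper's choice of $R=\max\{2\eps,\min\{4\|f\|_\infty/\int|\nabla f|,1\}\}$ play identical roles.
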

In Lemma \ref{le:main_ineq}, we improve an inequality established in
\cite{DKO}. Expressed in our setting, the inequality proved in
\cite{DKO} asserts that for every $\delta >0$ there exists
$M_\delta \gg 1$ such that for all $\eps\le R$ and all
$f: \T^2 \to \R$, we have
\begin{align}\label{eq:dko}
  \sum_{k \in 2\pi\Z^2} \min\left\{\frac{1}{\eps},
  |k|,R|k|^2\right\}|\widehat f_k|^2 \le
  (1+\delta)\frac{2}{\pi}\log\left(\frac{2M_\delta R}{\eps}\right)
  \|f\|_{\infty} \int_{\T^2}|\nabla f|\d2. 
\end{align}
Note that \eqref{eq:main_ineq_1d} implies a similar estimate
\begin{align}\label{eq:main_ineq_1d_w}
\begin{aligned}
  &\int_{\T^2} |\nabla^{1/2} f|^2\d2 \le \frac{\eps}{2}
  \int_{\T^2}|\nabla f|^2 \d2 +\frac{2}{\pi}\log
  \left(c_*/{\eps}\right)\, \|f\|_{\infty} \int_{\T^2}|\nabla f|\d2
\end{aligned}
\end{align}
for all $\eps \leq 1$, which is weaker than
\eqref{eq:main_ineq_1d}. Estimate \eqref{eq:dko} is an inequality
  for a regularized $\mathring{H}^{1/2}$-norm, whereas
  \eqref{eq:main_ineq_1d_w} estimates the full
  $\mathring{H}^{1/2}$-norm, but needs an additional
  $\mathring{H}^1$-term.  It ceases to be optimal for functions which
oscillate significantly. Indeed, let $\alpha \in (0,1)$ and consider
functions $f$ with
\begin{align}\label{eq:osci}
  \int_{\T^2}|\nabla f| \dx2 \gtrsim \eps^{-\alpha}\|f\|_{\infty}.
\end{align}
Then the second term in \eqref{eq:main_ineq_1d} is smaller than the
second term in \eqref{eq:main_ineq_1d_w} by a factor of $(1-\alpha)$
for all $f$ which satisfy \eqref{eq:osci}. Asymptotic optimality in
the case of strong oscillation is crucial to obtain the results on the
supercritical regime.

The proof of Lemma \ref{le:main_ineq} uses similar ideas as in
\cite{DKO} and is based on a separate treatment of distinct
scales. However, our proof does not involve any Fourier Analysis.

\begin{proof}[Proof of Lemma \ref{le:main_ineq}]
  We will show that the following estimates hold for all
  $f \in C^{\infty}(\T^2)$ and all $0<r\le R$:
  \begin{align}
    \label{eq:main_ineq_small} \int_{\T^2} \int_{B_r}
    \frac{|f(x+z)-f(x)|^2}{|z|^3} \dz2 \dx2 
    &\le \pi r\int_{\T^2}
      |\nabla f|^2 \dx2,\\ 
    \label{eq:main_ineq_medium}	\int_{\T^2} \int_{B_R \setminus B_r}
    \frac{|f(x+z)-f(x)|^2}{|z|^3} \dz2 \dx2 
    &\le 8 \log(R/r) \|f\|_{\infty} \int_{\T^2} |\nabla f| \dx2,\\
    \label{eq:main_ineq_large} \int_{\T^2} \int_{\R^2 \setminus B_R}
    \frac{|f(x+z)-f(x)|^2}{|z|^3} \dz2 \dx2 
    &\le \frac{2\pi\|f\|_{\infty}}{R}\min\left\{4\|f\|_\infty,
      \int_{\T^2} |\nabla f| \dx2\right\}. 
    \end{align}
    The claim of the Lemma will follow by adding
    \eqref{eq:main_ineq_small} -- \eqref{eq:main_ineq_large} and a
    suitable choice of $r$ and $R$. Before we start with the proofs of
    estimates \eqref{eq:main_ineq_small} --
    \eqref{eq:main_ineq_large}, we first record an auxiliary
    inequality for further use. By the Fundamental Theorem of
    Calculus, Jensen's inequality and Fubini's theorem we get
	\begin{align}\label{eq:Lp_translation}
	\begin{aligned}
          &\int_{\T^2} |f(x+z) - f(x)|^p \dx2 = \int_{\T^2}
          \left|\int_0^1 \nabla f(x+sz) \cdot z\, \ds1\right|^p \dx2\\
          &\le \int_0^1 \int_{\T^2} \left| \nabla f(x+sz) \cdot
            z\right|^p \dx2 \ds1 \le \int_{\T^2}|\nabla f(x) \cdot
          z|^p \dx2
	\end{aligned}
	\end{align}
	for all $z \in \R^2$ and all $1 \le p < \infty$. In order to
        prove \eqref{eq:main_ineq_small}, we use Fubini's Theorem and
        apply \eqref{eq:Lp_translation} with $p=2$ to get
	\begin{align}\label{eq:main_ineq_small_p1}
          \int_{\T^2} \int_{B_r} \frac{|f(x+z) - f(x)|^2}{|z|^3} \dz2
          \dx2 \lupref{eq:Lp_translation}{\le} \int_{B_r} \int_{\T^2}
          \frac{|\nabla f(x)\cdot z|^2}{|z|^3} \dx2 \dz2. 
	\end{align}
	We apply Fubini's Theorem again and evaluate the integral with
        respect to $z$ in polar coordinates
	\begin{align}\label{eq:main_ineq_small_p2}
		\begin{aligned}
                  \int_{B_r} \int_{\T^2} \frac{|\nabla f(x)\cdot
                    z|^2}{|z|^3} \dx2 \dz2 &= \left(\int_{0}^r
                    \int_{0}^{2\pi} \cos^2 \phi \, \dd \phi \dd \rho
                  \right) \left(\int_{\T^2}|\nabla f(x)|^2
                    \dx2\right)\\
                  &= \pi r\int_{\T^2}|\nabla f|^2\d2.
		\end{aligned}
	\end{align}
	Together, \eqref{eq:main_ineq_small_p1} and
        \eqref{eq:main_ineq_small_p2} yield the first estimate
        \eqref{eq:main_ineq_small}.
	
	For the estimate involving intermediate distances
        \eqref{eq:main_ineq_medium}, we use Fubini's Theorem (twice)
        and \eqref{eq:Lp_translation} with $p=1$ to conclude
	\begin{align}
	\begin{aligned}
	\label{eq:int2_a}
	\int_{\T^2} \int_{B_R \setminus B_r} \frac{|f(x+z) -
          f(x)|^2}{|z|^3} \dz2 \dx2 &\lupref{eq:Lp_translation}{\le}
        2\|f\|_\infty \int_{\T^2} \int_{B_R \setminus B_r}
        \frac{|\nabla f(x) \cdot z|}{|z|^3} \dz2 \dx2.
	\end{aligned}
	\end{align}
	As in the proof of \eqref{eq:dko} in \cite{DKO}, we evaluate
        the inner integral in polar coordinates
	\begin{align}\label{eq:int2_inner}
          \int_{B_R \setminus B_r} \frac{|\nabla f(x) \cdot
          z|}{|z|^3}\dz2 = \int_r^R \int_0^{2\pi} \frac{|\nabla
          f(x)|\;|\cos \phi|}{\rho} \dphi1 \drho1 =4 \log
          \left(\frac{R}{r}\right)|\nabla f(x)|. 
	\end{align}
	Inserting \eqref{eq:int2_inner} into \eqref{eq:int2_a} yields
        the claim \eqref{eq:main_ineq_medium}.
	
	In order to prove \eqref{eq:main_ineq_large}, we first show
	\begin{align}\label{eq:poincare}
          \int_{\T^2}|f(x + z) - f(x)| \dx2 \le
          \min\left\{2\|f\|_\infty, \frac{1}{2} \int_{\T^2} |\nabla f|
          \dx2 \right\} \quad\text{for all }z \in \R^2. 
	\end{align}
	Indeed, the upper bound of $2\|f\|_\infty$ in
        \eqref{eq:poincare} is trivial. Furthermore, since $f$ is
        periodic, it is sufficient to show the second upper bound in
        \eqref{eq:poincare} only for
        $z \in \left(-\tfrac12,\tfrac12\right)^2$. Thus the second
        bound in \eqref{eq:poincare} follows from
        \eqref{eq:Lp_translation} with $p=1$
	\begin{align*}
          \int_{\T^2}|f(x + z) - f(x)| \dx2
          \lupref{eq:Lp_translation}\le \int_{\T^2} |\nabla f(x) \cdot
          z| \dx2 \le \frac{1}{2}\int_{\T^2} |\nabla f(x)| \dx2 
	\end{align*}
	so that the proof of \eqref{eq:poincare} is complete. With
        \eqref{eq:poincare} at hand, estimate
        \eqref{eq:main_ineq_large} now follows by direct integration
	\begin{align}
	\begin{aligned}\label{eq:int3}
          \int_{\T^2} \int_{\R^2 \setminus B_R} \frac{|f(x+z) -
            f(x)|^2}{|z|^3} \dz2 \dx2 &\le 2\|f\|_{L^\infty}\int_{\R^2
            \setminus B_R} \int_{\T^2}\frac{|f(x+z) - f(x)|}{|z|^3}
          \dx2  \dz2\\ 
          &{\lupref{eq:poincare}\le} \frac{2\pi\|f\|_\infty}{R}
          \min\left\{4\|f\|_\infty, \int_{\T^2} |\nabla f| \dx2
          \right\}.
	\end{aligned}
	\end{align}

	It remains to prove \eqref{eq:main_ineq_1d}, for which we use
        the real-space representation of the homogeneous
        $H^{1/2}$-norm
	\begin{align}\label{eq:h12_real}
          \int_{\T^2} |\nabla^{1/2} f|^2 \d2 =
          \frac{1}{4\pi}\int_{\T^2} \int_{\R^2} \frac{|f(x+z) -
          f(x)|^2}{|z|^3} \dz2 \dx2. 
	\end{align}
	A proof of \eqref{eq:h12_real} is given in the appendix for
        completeness of the presentation. Without loss of generality,
        we may assume that $f$ is not equal to a constant in
        $\T^2$. Adding \eqref{eq:main_ineq_small} --
        \eqref{eq:main_ineq_large} to estimate the right hand side of
        \eqref{eq:main_ineq_1d}, we therefore get
	\begin{align}\label{eq:int_sum}
		\begin{aligned}
                  \int_{\T^2} |\nabla^{1/2} f|^2\d2 &\le
                  \frac{r}{4}\int_{\T^2}|\nabla f|^2\d2\\
                  &+ \left(\frac{2}{\pi}\log \left(\frac{R}{r}\right)
                    +
                    \frac{1}{2R}\min\left\{\frac{4\|f\|_\infty}{\int_{\T^2}|\nabla 
                        f| \dd x}, 1\right\}\right)\|f\|_\infty
                  \int_{\T^2}|\nabla f|\d2.
		\end{aligned}
	\end{align}
        For $r=2\eps$ and
        $R=\max\left\{2 \eps,\min\left\{\frac{4
              \|f\|_\infty}{\int_{\T^2}|\nabla f| \dd x},
            1\right\}\right\}$
        the claim \eqref{eq:main_ineq_1d} now follows from
        \eqref{eq:int_sum}.
\end{proof}

\section{Proofs for the reduced energy $F$}
\label{sec:proofs-F}
In this section we give the proofs of the Theorems involving the
reduced energy $F$.  The proof of Theorem \ref{th:SUBcrit-F} is a
direct consequence of Lemma \ref{le:subcrit-lb-comp} and Lemma
\ref{le:constr}. Similarly the proof of Theorem \ref{th:SUPERcrit-F}
follows immediately from Lemma \ref{le:F-SUPERcrit-lb} and Lemma
\ref{le:F-SUPERcrit-ub}. Finally, the proof of Theorem \ref{th:crit-F}
is presented at the end of this section.

\subsection{Proof of Theorem \ref{th:SUBcrit-F}}
\begin{lem}[Lower bound and compactness in the subcritical
  regime]\label{le:subcrit-lb-comp}
  Let $\lambda < \lambda_{c} := \frac{\pi}{2}$ and $F_{\eps,\lambda}$
  as defined in \eqref{eq:F}. Then every sequence $(m_\eps)$ in
  $H^{1}(\T^2; \S^2)$ with
  \begin{align} {\limsup_{\eps \to 0} F_{\eps, \lambda}[m_\eps] < +
      \infty}
\end{align}
converges in $L^1(\T^2;\R^3)$ (up to extracting a subsequence) towards
a limit in $BV(\T^2;\{\pm e_3\})$. Furthermore, for every sequence
$(m_\eps)$ in $L^1(\T^2;\S^2)$ with $m_\eps \to m$ for some $m$ in
$L^1(\T^2;\R^3)$ we have
\begin{align}\label{eq:subcrit-liminf}
\liminf_{\eps \to 0} F_{\eps,\lambda}[m_\eps] \ge \left\{
\begin{array}{ll}
  \displaystyle\left(1 - \frac{\lambda}{\lambda_{c}} \right)\int_{\T^2} |\nabla
  m_3|\d2 & \text{ for } m \in BV(\T^2;\{\pm e_3\}),\\ 
  +\infty & \text{ otherwise.}
\end{array}
\right.
\end{align}
\end{lem}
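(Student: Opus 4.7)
The plan is to use Lemma \ref{le:main_ineq} to absorb the negative $H^{1/2}$-term into a small multiple of the exchange energy plus a multiple of $\int_{\T^2}|\nabla m_{3,\eps}|\d2$ whose coefficient tends to exactly $\lambda/\lambda_c = 2\lambda/\pi$. Since $\lambda<\lambda_c$, this leaves a strictly positive deficit $1 - \lambda/\lambda_c$ in front of that perimeter-like quantity, after which compactness and the liminf inequality follow from a sphere-constrained Modica-Mortola argument together with lower semicontinuity of the BV-seminorm.

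Concretely, I would first apply Lemma \ref{le:main_ineq} in its weaker form \eqref{eq:main_ineq_1d_w} to $f = m_{3,\eps}$ with parameter $\eps$, using $\|m_{3,\eps}\|_\infty \le 1$, to obtain
\begin{align*}
\frac{\lambda}{|\log\eps|}\int_{\T^2}|\nabla^{1/2} m_{3,\eps}|^2 \d2 \le \frac{\lambda}{|\log\eps|}\int_{\T^2}\frac{\eps}{2}|\nabla m_{3,\eps}|^2 \d2 + \left(\frac{\lambda}{\lambda_c} + o(1)\right)\int_{\T^2}|\nabla m_{3,\eps}| \d2.
\end{align*}
Using the pointwise inequality $|\nabla m_{3,\eps}|^2 \le |\nabla m_\eps|^2$ the first term on the right can be absorbed into a fraction of the exchange part, yielding with $\alpha_\eps := 1 - \lambda/|\log\eps| \to 1$ the lower bound
\begin{align*}
F_{\eps,\lambda}[m_\eps] \ge \alpha_\eps \int_{\T^2}\frac{\eps}{2}|\nabla m_\eps|^2 \d2 + \int_{\T^2}\frac{1-m_{3,\eps}^2}{2\eps}\d2 - \left(\frac{\lambda}{\lambda_c} + o(1)\right)\int_{\T^2}|\nabla m_{3,\eps}| \d2.
\end{align*}
Combining AM-GM with the sphere constraint $|m_\eps|=1$ (which implies the pointwise identity $|\nabla m_\eps|\sqrt{1-m_{3,\eps}^2} \ge |\nabla m_{3,\eps}|$) then yields
\begin{align*}
F_{\eps,\lambda}[m_\eps] \ge \left(\sqrt{\alpha_\eps} - \frac{\lambda}{\lambda_c} + o(1)\right)\int_{\T^2}|\nabla m_{3,\eps}|\d2,
\end{align*}
with prefactor converging to $1 - \lambda/\lambda_c > 0$ as $\eps \to 0$.

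For compactness, this bound shows that $\limsup F_{\eps,\lambda}[m_\eps] < \infty$ yields a uniform $L^1$-bound on $|\nabla m_{3,\eps}|$, hence $L^1(\T^2)$-precompactness of $m_{3,\eps}$ by Rellich--Kondrachov. Once $\int|\nabla m_{3,\eps}|\d2$ is bounded, both the exchange and anisotropy contributions are bounded as well, and in particular $\int(1-m_{3,\eps}^2)\d2 \lesssim \eps \to 0$ forces any $L^1$-limit $m_3$ to lie in $\{\pm 1\}$ a.e.; combined with $|m_\eps|=1$ this gives $m_{1,\eps}, m_{2,\eps} \to 0$ in $L^2$ and hence $m_\eps \to m_3 e_3 \in BV(\T^2;\{\pm e_3\})$ in $L^1(\T^2;\R^3)$.

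For the liminf inequality, if the limit $m$ lies in $BV(\T^2;\{\pm e_3\})$, then combining the lower bound above with lower semicontinuity of the BV-seminorm under $L^1$-convergence immediately yields $\liminf F_{\eps,\lambda}[m_\eps] \ge (1 - \lambda/\lambda_c)\int_{\T^2}|\nabla m_3|\d2$. Otherwise $\liminf F_{\eps,\lambda}[m_\eps] = +\infty$: any subsequence with bounded energy would, by the compactness step, have a further $L^1$-convergent subsequence with limit in $BV(\T^2;\{\pm e_3\})$, which by uniqueness of limits must coincide with $m$, a contradiction. The main obstacle is the sharp $H^{1/2}$-estimate of Lemma \ref{le:main_ineq} providing the asymptotic constant $2/\pi = 1/\lambda_c$; once that is in hand, the remaining steps are sphere-constrained Modica-Mortola manipulations combined with standard lower semicontinuity.
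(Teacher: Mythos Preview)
Your proposal is correct and follows essentially the same approach as the paper: apply Lemma~\ref{le:main_ineq} in the weak form \eqref{eq:main_ineq_1d_w}, absorb the resulting $\tfrac{\eps}{2}|\nabla m_3|^2$ term into the exchange energy, then use the sphere-constrained Modica--Mortola inequality \eqref{eq:A2} to extract a coefficient converging to $1-\lambda/\lambda_c$ in front of $\int|\nabla m_3|$, after which compactness and the liminf inequality follow from BV compactness and lower semicontinuity. The only cosmetic difference is that the paper applies \eqref{eq:A2} twice in sequence (once to replace $\int|\nabla m_3|$ by the Modica--Mortola integrand and once in the reverse direction), whereas you use the weighted AM--GM form directly, producing the factor $\sqrt{\alpha_\eps}$ instead of $\alpha_\eps$; both tend to $1$ and the arguments are otherwise interchangeable.
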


\begin{proof}[Proof of Lemma \ref{le:subcrit-lb-comp}]
  We first show that for sufficiently small $\eps > 0$ we have
    \begin{align}\label{eq:F_SUBcrit_lb}
      F_{\eps,\lambda}[m]\ge \left(1 - \frac{\lambda|\log
      c\eps|}{\lambda_{c}|\log \eps|}\right) \int_{\T^2} |\nabla m_3
      |\d2 
    \end{align}
    for all $m \in H^1(\T^2; \S^2)$, where $c>$ is a universal
    constant. Indeed, for $\lambda < \lambda_c$ we expect
    $\int_{\T^2}|\nabla m_3| \dd x$ to be small and hence it is
    sufficient to use Lemma \ref {le:main_ineq} for $m_3$ in the
    weaker form \eqref{eq:main_ineq_1d_w}. Recalling that
    $\|m_3\|_\infty \le 1$ and $\lambda_c = \frac{\pi}{2}$, we get
    \begin{align}\label{eq:main_ineq_1d_SUBcrit_simple}
      &\frac{\lambda}{|\log \eps|}\int_{\T^2} |\nabla^{1/2} m_3|^2\d2
        \lupref{eq:main_ineq_1d_w}\le \frac{\lambda}{|\log \eps|}
        \int_{\T^2}\frac{\eps}{2}|\nabla m_3|^2 \d2
        +\frac{\lambda}{\lambda_c}\frac{\log \left(
        c_*/\eps\right)}{|\log \eps|}\,\int_{\T^2}|\nabla m_3|\d2. 
    \end{align}
    We also use the constraint $|m|=1$ in the form of the well-known
    estimate
    \begin{align}\label{eq:modica_mortola_p1}
      |\nabla m_3| \lupref{eq:A2}\le \frac{\eps}{2} |\nabla m|^2 +
      \frac1{2\eps}(1-m_3^2). 
    \end{align}
    which is obtained by differentiating $|m|^2=1$ and applying
    Young's inequality (see \eqref{eq:A2} in the Appendix for a
    proof). Now the claimed lower bound \eqref{eq:F_SUBcrit_lb}
    follows from \eqref{eq:main_ineq_1d_SUBcrit_simple} and
    \eqref{eq:modica_mortola_p1}:
    \begin{align}\label{eq:F_SUBcrit_lb_p1}
    \begin{aligned}
      F_{\eps,\lambda}[m] &= \int_{\T^2} \left(\frac{\eps}2 |\nabla
        m|^2 + \frac{1}{2\eps}(1 - m_3^2)\right)\d2-
      \frac{\lambda}{|\log \eps|}\int_{\T^2}|\nabla^{1/2} m_3|^2\d2\\
      &\lupref{eq:main_ineq_1d_SUBcrit_simple}\ge \left(1 -
        \frac{\lambda}{|\log \eps|}\right) \int_{\T^2}
      \left(\frac{\eps}2 |\nabla m|^2 + \frac{1}{2\eps}(
        1 - m_3^2)\right)\d2\\
      & \qquad -\frac{\lambda}{\lambda_c}\frac{\log \left(
          c_*/\eps\right)}{|\log \eps|}\,\int_{\T^2}|\nabla m_3|\d2\\
      &\lupref{eq:modica_mortola_p1}\ge\left(1
        -\frac{\lambda}{\lambda_c}\frac{\log \left(
            e^{\lambda_c}c_*/\eps\right)}{|\log \eps|}\right)
      \,\int_{\T^2} \left(\frac{\eps}2 |\nabla m|^2 +
        \frac{1}{2\eps}(1 - m_3^2)\right)\d2\\ 
      &\lupref{eq:modica_mortola_p1}\ge\left(1
        -\frac{\lambda}{\lambda_c}\frac{\log \left(
            e^{\lambda_c}c_*/\eps\right)}{|\log \eps|}\right)
      \,\int_{\T^2}|\nabla m_3|\d2.
    \end{aligned}
    \end{align}
    
    Let $m_\eps$ be a sequence in $H^1(\T^2; \S^2)$ with bounded
    energy
    $\limsup_{\eps \to 0} F_{\eps, \lambda}[m_\eps] < + \infty$.  From
    the penultimate line in \eqref{eq:F_SUBcrit_lb_p1}, $|m_\eps|=1$
    and $\lambda < \lambda_c$ we obtain
    \begin{align}\label{eq:F_lb_b}
      0 = \limsup_{\eps \to 0} \eps F_{\eps, \lambda} 
      [m_\eps] \lupref{eq:F_SUBcrit_lb_p1}\ge \frac{1}{2}\left(1 -
      \frac{\lambda}{\lambda_{c}}\right) \limsup_{\eps \to
      0} \int_{\T^2} \left( m_{\eps,1}^2 + m_{\eps,2}^2 \right) \d2,
    \end{align}
    implying that the first two components $m_{\eps,1}$ and
    $m_{\eps,2}$ converge to zero in $L^2(\T^2)$ as $\eps \to 0$.
    Moreover, \eqref{eq:F_SUBcrit_lb_p1} yields a uniform bound for
    $m_{\eps,3}$ in $BV$, which by compactness of $BV(\T^2)$ in
    $L^1(\T^2)$ implies the existence of a convergent
    subsequence. Passing to another subsequence, we may assume that
    $m_\eps$ converges pointwise almost everywhere. Since
    $|m_\eps|=1$, we obtain $m = \pm e_3$ almost everywhere.
 
    For the liminf inequality \eqref{eq:subcrit-liminf}, we may
    assume without loss of generality that \\
    $\liminf_{\eps \to 0}F_{\eps,\lambda}[m_\eps]< + \infty$.  But
    then there is a subsequence (not relabelled) such that
    $\limsup_{\eps \to 0}F_{\eps,\lambda}[m_\eps]< + \infty$ and by
    the compactness result and uniqueness of the limit we have
    $m \in BV(\T^2;\{\pm e_3\})$. Now the liminf inequality follows
    directly from \eqref{eq:F_SUBcrit_lb}, the fact that
    $\lim_{\eps \to 0}\frac{\lambda|\log c\eps|}{\lambda_{c}|\log
      \eps|} = \frac{\lambda}{\lambda_{c}}<1$
    and lower semi-continuity of the $BV$-seminorm.
\end{proof}

Before we begin with the construction of the upper bound, we define a
family of asymptotically optimal profiles and record some of their
properties (see Fig. \ref{fig:tikz:xi}).

\begin{figure}[h]
    \includegraphics[width=0.9\linewidth]{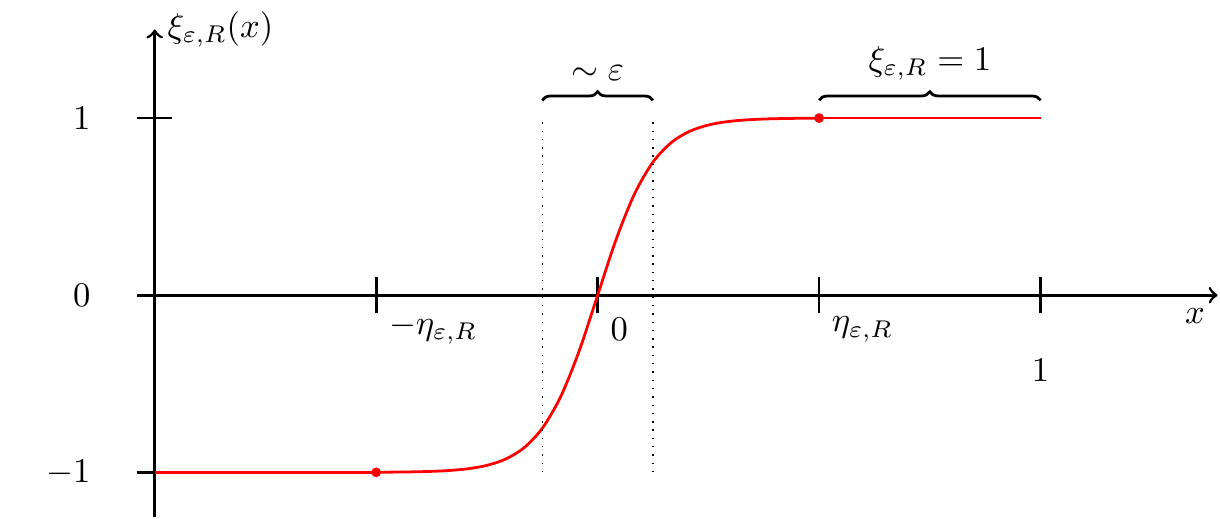}%
    \caption{Family of asymptotically optimal profiles $\xi_{\eps,R}$}%
    \label{fig:tikz:xi}%
\end{figure}%

\begin{lem}[Estimates for a family of asymptotically optimal
  profiles]\label{le:opt}
  For $R \in (0, + \infty]$ and $\eps>0$, let
  $\xi_{\eps,R} : \R \to [-1,1]$ be the unique solution to the initial
  value problem
    \begin{align}\label{eq:xi_eps_R}
      \xi_{\eps,R}(0) = 0 \qquad \text{and} \qquad \xi_{\eps,R}' =
      \frac{1}{\eps} (1-\xi_{\eps,R}^2)^{1/2}\left(1-\xi_{\eps,R}^2 +
      \left( \frac{\pi\eps}{2 R} \right)^2\right)^{1/2}. 
    \end{align}
    Then $\xi_{\eps,R}$ is non-decreasing and satisfies
    \begin{align}\label{eq:xi_eps_R_decay}
    \xi_{\eps,R}(x)=-\xi_{\eps,R}(-x) && \text{and}&&
    |\xi_{\eps,R}(x) - {\rm sign}(x)| \le 2e^{-2|x|/\eps}.
    \end{align}
    Moreover, $\xi_{\eps,R}(x)=1$ if $x \ge \eta_{\eps,R}$ and
    $\xi_{\eps,R}(x)=-1$ if $x\le -\eta_{\eps,R}$, for some
    $\eta_{\eps,R} \in (0,R]$. The contribution to the local part of
    the energy may be estimated as
    \begin{align}\label{eq:constr_local_energy}
      \frac12 \int_{-\eta_{\eps,R}}^{\eta_{\eps,R}} \left( 
      \frac{\eps |\xi_{\eps,R}'|^2}{1-\xi_{\eps,R}^2} +
      \frac{1-\xi_{\eps,R}^2}{\eps} \right) \dd x \le 2 +
      \frac{\pi^2\eps}{4R}. 
    \end{align}
    Lastly, there is a universal constant $c >0$ such that
    \begin{align}\label{eq:constr_nonlocal_energy}
      \int_{-X}^{X} \int_{-X}^X
      \frac{|\xi_{\eps,R}(x)- \xi_{\eps,R}(y)|^2}{|x-y|^2}
      \dd x \dd y \ge 8 \log(c X/\eps) \quad \text{for }X \ge
      2\eps.  
    \end{align}
\end{lem}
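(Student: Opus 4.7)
Existence, uniqueness and monotonicity of $\xi_{\eps,R}$ on its domain of definition follow at once from the fact that the right-hand side of \eqref{eq:xi_eps_R} is non-negative and locally Lipschitz for $\xi\in(-1,1)$. At the equilibria $\xi=\pm 1$ the right-hand side vanishes only like $\sqrt{1-\xi^2}$ when $R<\infty$, so a separation-of-variables argument combined with $\bigl(1-\xi^2+(\pi\eps/2R)^2\bigr)^{1/2}\ge \pi\eps/(2R)$ gives
\[
\eta_{\eps,R}=\int_0^1\frac{\eps\,\dd\xi}{\sqrt{1-\xi^2}\sqrt{1-\xi^2+(\pi\eps/2R)^2}}\le \frac{2R}{\pi}\int_0^1\frac{\dd\xi}{\sqrt{1-\xi^2}}=R,
\]
and in particular $\xi_{\eps,R}$ reaches $\pm 1$ in finite time. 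The oddness $\xi_{\eps,R}(x)=-\xi_{\eps,R}(-x)$ follows from uniqueness, since $x\mapsto-\xi_{\eps,R}(-x)$ solves the same IVP. For the decay estimate \eqref{eq:xi_eps_R_decay} I would drop the $(\pi\eps/2R)^2$-term in the square root to obtain $\xi_{\eps,R}'\ge(1-\xi_{\eps,R}^2)/\eps$, so comparison with the $\tanh$-ODE yields $\xi_{\eps,R}(x)\ge\tanh(x/\eps)$ for $x\ge 0$, and thus $1-\xi_{\eps,R}(x)\le 1-\tanh(x/\eps)\le 2e^{-2x/\eps}$; the case $x<0$ follows from oddness.

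For the local energy bound \eqref{eq:constr_local_energy}, the ODE yields $\eps(\xi_{\eps,R}')^2/(1-\xi_{\eps,R}^2)=(1-\xi_{\eps,R}^2+(\pi\eps/2R)^2)/\eps$, so the Modica--Mortola density simplifies to
\[
\tfrac12\!\left(\tfrac{\eps(\xi_{\eps,R}')^2}{1-\xi_{\eps,R}^2}+\tfrac{1-\xi_{\eps,R}^2}{\eps}\right)=\tfrac{1-\xi_{\eps,R}^2}{\eps}+\tfrac{\pi^2\eps}{8R^2}.
\]
The comparison above gives $(1-\xi_{\eps,R}^2)/\eps\le\xi_{\eps,R}'$, so the first term integrates to at most $\int_{-\eta_{\eps,R}}^{\eta_{\eps,R}}\xi_{\eps,R}'\,\dd x=2$, while the constant term contributes $(\pi^2\eps/(8R^2))\cdot 2\eta_{\eps,R}\le\pi^2\eps/(4R)$ thanks to $\eta_{\eps,R}\le R$, giving the claimed bound $2+\pi^2\eps/(4R)$.

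The main difficulty is the nonlocal lower bound \eqref{eq:constr_nonlocal_energy}, where the sharp coefficient $8$ must be recovered. The crucial observation is that the comparison $\xi_{\eps,R}(x)\ge\tanh(x/\eps)$ for $x\ge 0$ combined with oddness yields $\xi_{\eps,R}(x)\le\tanh(x/\eps)$ for $x\le 0$, so for $x\le 0\le y$,
\[
\bigl(\xi_{\eps,R}(y)-\xi_{\eps,R}(x)\bigr)^2\ge \bigl(\tanh(y/\eps)-\tanh(x/\eps)\bigr)^2.
\]
Restricting the double integral to $\{x\le 0\le y\}\cup\{y\le 0\le x\}$ (the discarded regions are non-negative) and rescaling $(x,y)=\eps(u,v)$ reduces the problem to lower-bounding the $\eps$-independent quantity
\[
J(Y):=2\int_{-Y}^{0}\!\!\int_{0}^{Y}\frac{(\tanh v-\tanh u)^2}{(v-u)^2}\,\dd u\,\dd v,\qquad Y:=X/\eps\ge 2.
\]
Using the identity $\tanh v-\tanh u=\sinh(v-u)/(\cosh u\cosh v)$ together with the change of variables $(s,w)=(v-u,v+u)$ (or alternatively a Plancherel-type computation based on the explicit Fourier transform of $\tanh$), I would show $J(Y)=8\log Y+O(1)$ as $Y\to\infty$, which combined with a check that $J(Y)\ge 0$ on $[2,Y_0]$ for any fixed $Y_0$ gives $J(Y)\ge 8\log(cY)$ for some universal $c>0$. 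The sharpness of the coefficient $8$ is the hard point and cannot be obtained by a cruder argument: for instance, using $(\tanh v-\tanh u)^2\ge 4(1-2e^{-2})^2$ on $\{|u|,v\ge 1\}$ and computing $\int\!\!\int\frac{1}{(v-u)^2}\dd u\,\dd v$ explicitly only yields $\approx 4.2\log Y$, and the deficit cannot be absorbed into the universal constant $c$ since it multiplies $\log Y$. Re-inserting $Y=X/\eps$ then gives the desired $8\log(cX/\eps)$.
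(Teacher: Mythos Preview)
Your plan is correct. The arguments for existence/uniqueness/oddness, the bound $\eta_{\eps,R}\le R$, the decay estimate, and the local energy bound are essentially identical to the paper's; the only cosmetic difference is that the paper rewrites the Modica--Mortola density as
\[
2\xi_{\eps,R}'+\frac{1}{\eps}\Bigl(\sqrt{1-\xi_{\eps,R}^2+(\pi\eps/2R)^2}-\sqrt{1-\xi_{\eps,R}^2}\,\Bigr)^2\le 2\xi_{\eps,R}'+\frac{\pi^2\eps}{4R^2}
\]
and integrates, whereas your algebra reaches the same bound slightly more directly.

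For the nonlocal estimate \eqref{eq:constr_nonlocal_energy} you take a genuinely different route: you first pass from $\xi_{\eps,R}$ to $\tanh(\cdot/\eps)$ via the ODE comparison, rescale to the $\eps$-free quantity $J(Y)$, and then aim at $J(Y)=8\log Y+O(1)$. The paper instead works directly with $\xi_{\eps,R}$: it restricts to $x\in[-X,0]$, $y\in[\eps,x+X]$ (the cutoff $y\ge\eps$ is what regularizes the otherwise divergent integral $\iint 4\,(y-x)^{-2}$), writes the numerator as $4-(4-|\xi(y)-\xi(x)|^2)$, computes the leading part $4\iint(y-x)^{-2}\,\dd y\,\dd x=4\bigl(\log((X+\eps)/\eps)-1\bigr)$ exactly, and bounds the remainder uniformly using the exponential decay \eqref{eq:xi_eps_R_decay}. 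Your reduction to $J(Y)$ is cleaner in that it isolates a universal one-parameter problem; on the other hand, to actually prove $J(Y)=8\log Y+O(1)$ you will in practice need the same splitting (the region $\{u\le -1,\,v\ge 1\}$ where the numerator is close to $4$, plus $O(1)$ strips near the axes and an $O(1)$ correction from $4-(\tanh v-\tanh u)^2$), so the two arguments ultimately converge. Either the change of variables $(s,w)=(v-u,v+u)$ you mention or this direct decomposition works; the Plancherel idea is less convenient here because of the finite domain $[-Y,Y]$.
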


\begin{proof}
  The existence, uniqueness and monotonicity of $\xi_{\eps,R}$ follows
  by direct integration. In particular, for $R< + \infty$, there
  exists a unique real number $\eta_{\eps,R} >0$, such that the
  solution of \eqref{eq:xi_eps_R} satisfies
  $\xi_{\eps,R}(s) \in (-1,1)$ for
  $s \in (-\eta_{\eps,R}, \eta_{\eps,R})$ and
  $\xi_{\eps,R}(\pm \eta_{\eps,R}) = \pm 1$. For $R=+\infty$, we have
  $\xi_{\eps,\infty}=\tanh(\cdot/\eps)$ and the claim follows for
  $\eta_{\eps,\infty}=+\infty$. Estimate \eqref{eq:xi_eps_R_decay}
  follows immediately from $\xi_{\eps,\infty} \le \xi_{\eps,R} \le 1$
  for $x\ge 0$.

  We will now show that $\eta_{\eps,R} \le R$ holds. Indeed, since
  $\xi_{\eps,R}$ is strictly monotone on
  $(-\eta_{\eps,R},\eta_{\eps,R})$, the inverse function theorem
  yields
    \begin{align}\label{eq:eta_eps}
    \begin{aligned}
      \eta_{\eps,R} &= \lim_{s \to 1^-} \xi_{\eps,R}^{-1}(s) = \int_0^{1}
      \left(\xi_{\eps,R}^{-1}\right)'(s) \ds1\\
      &= \int_0^1 \frac{\eps}{\sqrt{\left(1-s^2\right) \left(1-s^2 +
            \frac{\pi^2\eps^2}{4 R^2}\right)} } \ds1 \le \int_0^1
      \frac{2 R}{\pi \sqrt{1-s^2 } } \ds1 = R.
    \end{aligned}
    \end{align}
    We turn to the proof of \eqref{eq:constr_local_energy}. By
    \eqref{eq:xi_eps_R}, we have
    \begin{align}
    \begin{aligned}\label{eq:constr_local_energy_integrand}
      & \frac{\eps |\xi_{\eps,R}'|^2}{1-\xi_{\eps,R}^2} +
      \frac{1-\xi_{\eps,R}^2}{\eps} \lupref{eq:xi_eps_R}= 2
      \xi_{\eps,R}' + \frac{1}{\eps}\left(\sqrt{1-\xi_{\eps,R}^2 +
          \left(\frac{\pi \eps}{2 R}\right)^2} -
        \sqrt{1-\xi_{\eps,R}^2}\right)^2\\
      &= 2 \xi_{\eps,R}' + \frac{1}{\eps}\left(\int_{0}^{\frac{\pi
            \eps}{2 R}} \frac{s}{\sqrt{1-\xi_{\eps,R}^2 + s^2}} \dd s
      \right)^2 \le 2 \xi_{\eps,R}' + \frac{\pi^2\eps}{4 R^2}
    \end{aligned}
    \end{align}
    and thus \eqref{eq:constr_local_energy} follows from
    \eqref{eq:constr_local_energy_integrand} by integration.

    It remains to prove \eqref{eq:constr_nonlocal_energy}. By symmetry
    of $\xi_{\eps,R}$ we have
    \begin{align}
    \begin{aligned}\label{eq:constr_nonlocal_energy_proof1}
      &\int_{-X}^{X} \int_{\{\eps \le |z| \le X\}\cap \{|x+z|\le X\}}
      \frac{|\xi_{\eps,R}(x+z)- \xi_{\eps,R}(x)|^2}{|z|^2} \dd z \dd x
      \\ 
      &= 2\int_{-X}^{0}\int_{\{\eps \le |z| \le X\}\cap \{|x+z|\le
        X\}} \frac{|\xi_{\eps,R}(x+z)- \xi_{\eps,R}(x)|^2}{|z|^2} \dd
      z \dd x
    \end{aligned}
    \end{align}
    As it turns out, it is sufficient to restrict the integral to a
    set where $|\xi_{\eps,R}(x+z)- \xi_{\eps,R}(x)| \gtrsim 1$ to
    obtain the correct leading order behavior
    \begin{align*}
      &\int_{-X}^{0}\int_{\{\eps \le |z| \le X\}\cap \{|x+z|\le X\}}
        \frac{|\xi_{\eps,R}(x+z)- \xi_{\eps,R}(x)|^2}{|z|^2} \dd z \dd
        x\\ 
      &\qquad \ge \int_{-X}^{0}\int_{\eps}^{x+X}
        \frac{|\xi_{\eps,R}(y)- \xi_{\eps,R}(x)|^2}{|y-x|^2} \dd y \dd
        x. 
    \end{align*}
    Since $|1-\xi_{\eps,R}|$ decays exponentially with rate $1/\eps$,
    we split the integral into the leading order and a lower order
    correction
    \begin{align}
    \begin{aligned}\label{eq:constr_nonlocal_energy_3}
      \int_{-X}^{0}\int_{\eps}^{x+X} &\frac{|\xi_{\eps,R}(y)-
        \xi_{\eps,R}(x)|^2}{|y-x|^2} \dd y \dd x =
      \int_{-X}^{0}\int_{\eps}^{x+X} \frac{4}{|y-x|^2} \dd y \dd x \\ 
      &- \int_{-X}^{0}\int_{\eps}^{x+X} \frac{4-|\xi_{\eps,R}(y)-
        \xi_{\eps,R}(x)|^2}{|y-x|^2} \dd y \dd x.
    \end{aligned}
    \end{align}
    The first term on the right hand side of
    \eqref{eq:constr_nonlocal_energy_3} yields
    \begin{align}
    \begin{aligned}\label{eq:constr_nonlocal_energy_proof4}
      \int_{-X}^{0}\int_{\eps}^{x+X} \frac{1}{|y-x|^2} \dd y \dd x =
      \log\left(\frac{\eps+X}{\eps}\right) - 1.
    \end{aligned}
    \end{align}
    Thus, it is sufficient to show that the second term on the right
    hand side of \eqref{eq:constr_nonlocal_energy_3} is bounded
    independently of $\eps$. Indeed, using the exponential decay of
    $|1-\xi_{\eps,R}|$, we get
    \begin{align}
    \begin{aligned}\label{eq:constr_nonlocal_energy_5}
      \int_{-X}^{0}\int_{\eps}^{x+X} \frac{4-|\xi_{\eps,R}(y)-
        \xi_{\eps,R}(x)|^2}{|y-x|^2} \dd y \dd x \lesssim
      \int_{0}^\infty \int_{1}^\infty \frac{e^{-2x} +
        e^{-2y}}{|x+y|^2} \dd x \dd y \lesssim 1.
    \end{aligned}
    \end{align}
    Together, \eqref{eq:constr_nonlocal_energy_proof1} --
    \eqref{eq:constr_nonlocal_energy_5} yield the claim
    \eqref{eq:constr_nonlocal_energy}.
\end{proof}

For the special case $\lambda = 0$, the $\Gamma$-convergence and in
particular the construction of a recovery sequence is a classical
result, relying on the optimal one-dimensional transition profiles to
smooth out the jump discontinuity in the limit configuration
\cite{ABV}.  As it turns out, this construction also works for
$\lambda >0$, where $F_{\eps, \lambda}$ is nonlocal. We will use a
construction based on the nearly optimal profile $\xi_{\eps,R}$ from
Lemma \ref{le:opt}. As the calculations for the local part of the
energy are well-known, our focus is on the contribution of the
homogeneous $H^{1/2}$-norm. Recall that we need to prove a lower bound
for the $H^{1/2}$-norm in order to obtain an upper bound for
$F$.

\begin{lem}[Construction of a recovery sequence in the subcritical and
  critical regime]\label{le:constr}
  Let $\lambda \le \lambda_c$ and $m \in L^1(\T^2;\S^2)$. Then there
  is a sequence $(m_\eps)$ in $H^1(\T^2;\S^2)$ with
    \begin{align}
      \limsup_{\eps \to 0}F_{\eps,\lambda}[m_\eps] \le F_{*,\lambda}[m],
    \end{align}
    where $F_{\eps,\lambda}$ is given by \eqref{eq:F}, and
    $F_{*,\lambda}$ is given by \eqref{eq:F_st} for
    $\lambda < \lambda_c$ or \eqref{eq:F_st_crit} for
    $\lambda = \lambda_c$, respectively.
\end{lem}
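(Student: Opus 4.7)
The plan is the classical recovery-sequence strategy: approximate the limit configuration $m$ by one with a smooth jump set, modify it in a tubular neighborhood of the interface using the nearly optimal one-dimensional profile $\xi_{\eps,R}$ from Lemma \ref{le:opt}, and diagonalize in the two small parameters $\eps$ and $R$. By BV density of sets with smooth boundary and lower semicontinuity of $F_{*,\lambda}$, it suffices to build a recovery sequence for configurations $m \in BV(\T^2;\{\pm e_3\})$ whose jump set $\Gamma$ is of class $C^2$; for the critical case $\lambda=\lambda_c$, a further preliminary approximation of a general $m\in L^1(\T^2;\{\pm e_3\})$ by such BV configurations is cost-free since $F_{*,\lambda_c}\equiv 0$ on the limit domain.

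Given such $m$, let $d$ be the signed distance to $\Gamma$ and pick a smooth $\S^1$-valued map $v$ on a tubular neighborhood $U$ of $\Gamma$ of radius $2R$ (for instance $v\equiv(1,0)$ on each component of $U$). I set
\[
m_\eps(x) := \Bigl( \sqrt{1-\xi_{\eps,R}(d(x))^2}\,v(x),\ \xi_{\eps,R}(d(x)) \Bigr) \qquad \text{on } U,
\]
and extend by $\pm e_3$ outside $U$; this is well defined and lies in $H^1(\T^2;\S^2)$ because $\xi_{\eps,R}(\pm\eta_{\eps,R})=\pm 1$ and $\eta_{\eps,R}\le R$. Using $\sum_i v_i\nabla v_i=0$, a direct computation gives
\[
\tfrac{\eps}{2}|\nabla m_\eps|^2+\tfrac{1}{2\eps}(1-m_{\eps,3}^2) = \tfrac{\eps}{2}\tfrac{(\xi_{\eps,R}')^2}{1-\xi_{\eps,R}^2}+\tfrac{1}{2\eps}(1-\xi_{\eps,R}^2)+O\bigl(\eps(1-\xi_{\eps,R}^2)|\nabla v|^2\bigr),
\]
so via the coarea formula in the tube combined with \eqref{eq:constr_local_energy}, the local part of $F_{\eps,\lambda}[m_\eps]$ is bounded by $2\mathcal H^1(\Gamma)+O(\eps/R)+O(R)$.

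The main obstacle is the matching \emph{lower} bound for the nonlocal term, namely
\[
\int_{\T^2}|\nabla^{1/2}m_{\eps,3}|^2\,dx \ \ge\ \tfrac{4}{\pi}\log(cR/\eps)\,\mathcal H^1(\Gamma) - C\mathcal H^1(\Gamma).
\]
I would start from the real-space representation \eqref{eq:h12_real} and restrict the $x$-integration to the tube and the $y$-integration to $|y|\le R$ (a legitimate lower bound since the integrand is nonnegative). Passing to curvilinear coordinates $x=(s,t)$ with $s$ along $\Gamma$ and $t\in(-R,R)$ normal, and writing $m_{\eps,3}(x+y) = \xi_{\eps,R}(t+y\cdot\nu(s))$ up to a curvature error of order $|y|^2\kappa$ (negligible as $R\to 0$ since $\Gamma\in C^2$), the angular integration in $y$ collapses via the elementary identity
\[
\int_{\R^2}\frac{|\phi(y\cdot\nu)|^2}{|y|^3}\,dy = 2\int_{\R}\frac{|\phi(u)|^2}{u^2}\,du \qquad \bigl(\phi\in C_c(\R),\ \phi(0)=0\bigr),
\]
after which the resulting one-dimensional double integral is bounded below by $8\log(cR/\eps)$ using \eqref{eq:constr_nonlocal_energy} with $X=R$; the exponential decay \eqref{eq:xi_eps_R_decay} absorbs the boundary effects at $|t|=R$ and the tail of the $u$-integration outside $[-R,R]$.

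Multiplying by $\lambda/|\log\eps|$, combining with the local bound, using $\int|\nabla m_3|=2\mathcal H^1(\Gamma)$, and letting $R=R_\eps\to 0$ with $\log R_\eps/|\log\eps|\to 0$ (so that $\eps/R_\eps\to 0$ as well), I obtain
\[
\limsup_{\eps\to 0}F_{\eps,\lambda}[m_\eps] \ \le\ 2\Bigl(1-\tfrac{\lambda}{\lambda_c}\Bigr)\mathcal H^1(\Gamma) \ =\ F_{*,\lambda}[m],
\]
valid both for $\lambda<\lambda_c$ and, with trivial right-hand side, for $\lambda=\lambda_c$. A standard Urysohn-type diagonalization removes the preliminary $C^2$-regularity (and, at criticality, the auxiliary BV approximation), completing the construction.
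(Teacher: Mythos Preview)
Your proposal is correct and follows essentially the same approach as the paper: reduce to smooth jump sets by density, construct the recovery sequence via the one-dimensional profile $\xi_{\eps,R}$ composed with the signed distance, estimate the local terms by the coarea formula together with \eqref{eq:constr_local_energy}, and obtain the matching lower bound on the nonlocal term from \eqref{eq:constr_nonlocal_energy}, finishing by a diagonalization in $(\eps,R)$.

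The one place where your execution differs from the paper is the reduction of the two-dimensional $H^{1/2}$-integral to the one-dimensional estimate \eqref{eq:constr_nonlocal_energy}. The paper applies the coarea formula to \emph{both} variables in \eqref{eq:h12_real} and then proves the curvilinear kernel bound
\[
\int_{\{d=\rho'\}}\int_{\{d=\rho\}}\frac{1}{|x-y|^3}\,d\HH^1(y)\,d\HH^1(x)\ \ge\ (1-\delta)\,\frac{2\HH^1(\partial A)}{(\rho-\rho')^2},
\]
whereas you pass to curvilinear coordinates in $x$ only and then collapse the $y$-integration via the angular identity $\int_{\R^2}|y|^{-3}|\phi(y\cdot\nu)|^2\,dy=2\int_{\R}u^{-2}|\phi(u)|^2\,du$, treating the curvature correction $d(x+y)-(t+y\cdot\nu)=O(\kappa|y|^2)$ as an error. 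Both routes arrive at the same one-dimensional double integral and the same leading constant; the paper's version isolates the geometry in a clean kernel inequality independent of $\xi_{\eps,R}$, while yours is slightly shorter but requires a line of bookkeeping to show that the curvature error, after integrating $|\xi_{\eps,R}'|$ in the normal variable, contributes only $O(R\,\HH^1(\Gamma))$ and is thus harmless as $R\to 0$.
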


\begin{proof}[Proof of Lemma \ref{le:constr}]
  It is sufficient to prove the limsup inequality under the additional
  assumption that $m = (\chi_A -\chi_{\T^2 \setminus A}){e_3}$ for a
  set $A \subset \T^2$ with smooth boundary. By standard density results (see
  e.g. \cite[Prop. 12.20]{maggi2012sets}) and a diagonal argument, the
  limsup inequality then extends to arbitrary $A\subset \T^2$ with
  finite perimeter for $\lambda < \lambda_c$ or to measurable
  $A \subset \T^2$ for the $\lambda = \lambda_c$ case. Since
  $F_{*,\lambda}[m] = +\infty$ for $m \notin BV(\T^2, \{\pm e_3\})$
  when $\lambda<\lambda_c$ or for $m \notin L^1(\T^2, \{\pm e_3\})$)
  when $\lambda=\lambda_c$, this yields the claim.

  Our strategy is to adapt the optimal profiles $\xi_{\eps,R}$ from
  Lemma \ref{le:opt} to the two-dimensional setting by means of the
  signed distance function $d$, given by
  $d(x) := \dist(x, A^c) - \dist(x,A)$. Without loss of generality, we
  may assume $0 < |A|<1$ (otherwise take $m_\eps \equiv \pm e_3$). To
  fix the notation, let $\nu: \partial A \to \R^2$ denote the smooth
  inward normal to $A$ and $\tau: \partial A \to \R^2$,
  $\tau = \nu^{\perp}$ denote a smooth tangent vector field to
  $\partial A$ obtained by a counter-clockwise $90^\circ$ rotation of
  $\nu$. As $\partial A$ is assumed to be smooth, there exists a
  tubular neighborhood
  $\left(\partial A\right)_R = \bigcup_{x\in \partial A} B_R(x)
  \subset \T^2$ for some $R>0$ such that the projection
  $p:\left(\partial A\right)_R \to \partial A$,
  $p(x) := \argmin_{y \in \partial A} |x-y|$ is single-valued and
  hence well-defined. Furthermore, the projection $p$ and the signed
  distance function $d$ are smooth on $(\partial A)_R$ and the
  identity
  \begin{align}\label{eq:pg}
      x = p(x) + d(x) \nu(p(x))
  \end{align}
  holds for all $x \in (\partial A)_R$, see e.g. \cite[Lemma
  14.16]{gilbargtrudinger}.

  With the necessary notation at hand, we define the recovery sequence
  by
  \begin{align}\label{eq:m_eps}
    m_{\eps}(x) = \xi_{\eps,R}(d(x)) e_3 + \sqrt{1-\xi_{\eps,R}^2(d(x))}
    \, \tau(p(x)). 
  \end{align}
  Recall that $\eta_{\eps,R} \leq R$, (see \eqref{eq:eta_eps}) and
  hence the function $m_\eps$ is Lipschitz continuous and piecewise smooth.

  It is easy to see that $m_\eps \to m$ in $L^1(\T^2)$, and for the
  sake of completeness, we briefly mention how to compute the
  contribution of the local energy terms. Since $\tau \perp e_3$,
  $(\tau \circ p) \cdot \nabla (\tau \circ p) = 0$ and $|\nabla d|=1$
  almost everywhere, the squared gradient of $m_\eps$ can be estimated
  by 
\begin{align}\label{eq:m_eps_grad}
\begin{aligned}
  |\nabla m_\eps|^2 
  &= \frac{|\xi_{\eps,R}'(d)|^2}{1-\xi_{\eps,R}^2(d)} +
  (1-\xi_{\eps,R}^2(d))|\nabla (\tau \circ p) |^2\le
  \frac{|\xi_{\eps,R}'(d)|^2}{1-\xi_{\eps,R}^2(d)} + C_{A},
\end{aligned}
\end{align}
where $C_{A} > 0$ is a constant that depends only on $A$ for all
$R \leq R_A$, where $R_A > 0$ depends only on $A$. In the following,
$C_A$ may change from line to line.

We next employ the co-area formula, to reduce to the one-dimensional
case:
\begin{align}\label{eq:rec_rhs_a}
\begin{aligned}
  &\int_{\T^2} \left(\frac{\eps}{2}|\nabla m_\eps|^2 +
    \frac{1}{2\eps}(1-{m_{\eps,3}^2} ) \right)\d2\\
  &\lupref{eq:m_eps_grad}\le\int_{(\p A)_{\eta_{\eps,R}}}
  \left(\frac{\eps |\xi'_{\eps,R}(d)|^2}{2(1-\xi_{\eps,R}^2(d))} +
    \frac{1}{2\eps}(1-\xi_{\eps,R}^2(d))\right)\d2 + \eps C_{A}\\
  &\le\int_{-\eta_{\eps,R}}^{\eta_{\eps,R}} \left(\frac{\eps
      |\xi'_{\eps,R}(s)|^2}{2(1-\xi_{\eps,R}^2(s))} +
    \frac{1}{2\eps}(1-\xi_{\eps,R}^2(s))\right)\,\HH^1(\{d(x) = s
  \})\ds1 + \eps C_{A}.
\end{aligned}
\end{align}
Inserting the estimate for the one-dimensional profile from Lemma
\ref{le:opt}, we obtain
\begin{align}\label{eq:rec_rhs_b}
\begin{aligned}
  &\int_{-\eta_{\eps,R}}^{\eta_{\eps,R}} \left(\frac{\eps
      |\xi'_{\eps,R}(s)|^2}{2(1-\xi_{\eps,R}^2(s))} +
    \frac{1}{2\eps}(1-\xi_{\eps,R}^2(s))\right)\,\HH^1(\{d(x) = s
  \})\ds1\\
  &\lupref{eq:constr_local_energy}\le \sup_{-\eta_{\eps,R} \le s \le
    \eta_{\eps,R}}\HH^1(\{d(x) = s \})\left(2 +
    O\left(\frac{\eps}{R}\right)\right).
\end{aligned}
\end{align}
Since $\partial A$ and the signed distance function $d$ are smooth in
$(\partial A)_R$, we have
\begin{align}\label{eq:lim-AH}
    \lim_{s \to 0}\HH^1(\{d(x) = s \}) = \HH^1(\partial A).
\end{align}
In the limit
$\eps \to 0$, then $R \to 0$, estimates \eqref{eq:rec_rhs_a},
\eqref{eq:rec_rhs_b} and \eqref{eq:eta_eps} hence imply
\begin{align}
\begin{aligned}
\label{eq:rec_rhs}
&\limsup_{R \to 0} \limsup_{\eps \to 0}\int_{\T^2}
\left(\frac{\eps}{2}|\nabla m_\eps|^2 +
  \frac{1}{2\eps}\left(1-m_{\eps,3}^2 \right)\right) &\le 2
\HH^1(\partial A).
\end{aligned}
\end{align}

We now turn to the estimate of the nonlocal term in the energy $F$.
As for the local terms, our strategy is to use the one-dimensional
estimates from Lemma \ref{le:opt}. Invoking the coarea formula twice
and inserting \eqref{eq:m_eps}, we get
\begin{align}
\label{eq:I_nonl}
  &\Int_{\T^2} \Int_{\R^2} \frac{|m_{\eps,3}(x) -
    m_{\eps,3}(y)|^2}{|x-y|^3}\d[x]{2} \d[y]{2}\\
  &\ge\int_{-R}^{R}\int_{\{x:\,d(x)=\rho'\}}\left(\int_{-R}^{R}
    \int_{\{y:\,d(y)=\rho\}}\frac{|\xi_{\eps,R}(\rho')
      - \xi_{\eps,R}(\rho)|^2}{|x-y|^3}\dd \HH^1(y) \dd \rho \right) \dd
  {\mathcal H^1(x)} \dd \rho'. \notag
\end{align}
We claim that the integrals over curves tangential to the
boundary may be estimated as follows: For every $\delta > 0$,
there is an $R_{\delta,A}$ such that
\begin{align}\label{eq:tang-int}
  \int_{\{x:\,d(x)=\rho'\}}
  \int_{\{y:\,d(y)=\rho\}}\frac{1}{|x-y|^3}\dd \HH^1(y) \dd
  {\mathcal H^1(x)} \ge (1-\delta)\frac{2\HH^1(\p A)}{(\rho -
  \rho')^2}, 
\end{align}
for all $R\le R_{\delta,A}$ and all $\rho \neq \rho' \in (-R,R)$.
Assuming for a moment that \eqref{eq:tang-int} holds, we conclude by
inserting \eqref{eq:tang-int} into \eqref{eq:I_nonl} and applying the
one-dimensional estimate \eqref{eq:constr_nonlocal_energy}
\begin{align*}
\begin{aligned}
  \frac{\lambda}{|\log
    \eps|}\Int_{\T^2}|\nabla^{1/2}m_{\eps,3}|^2\d[x]{2}
  &\lupupref{eq:I_nonl}{eq:tang-int}\ge (1-\delta)\frac{\lambda
    \HH^1(\p A)}{2\pi |\log \eps|}\int_{-R}^{R}\int_{-R}^{R}
  \frac{|\xi_{\eps,R}(\rho) -
    \xi_{\eps,R}(\rho')|^2}{|\rho - \rho'|^2}\dd \rho' \dd \rho\\
  &\lupref{eq:constr_nonlocal_energy}\ge (1-\delta)2\HH^1(\p
  A)\frac{\lambda}{\lambda_c}\frac{\log(cR/\eps)}{|\log \eps|}.
    \end{aligned}
\end{align*}
Since $\delta$ was arbitrary, we obtain
\begin{align}\label{eq:nonl-sup}
  \liminf_{R \to 0}\liminf_{\eps \to 0}\frac{\lambda}{|\log
  \eps|}\Int_{\T^2}|\nabla^{1/2}m_{\eps,3}|^2\d[x]{2} \ge 2\HH^1(\p
  A)\frac{\lambda}{\lambda_c}. 
\end{align}
Together, \eqref{eq:rec_rhs} and \eqref{eq:nonl-sup} imply the limsup
inequality by a standard diagonal argument.

It remains to prove \eqref{eq:tang-int}, for which we fix
$x \in (\p A)_R$ with $d(x)=\rho'$ and pass to curvilinear coordinates
in a neighborhood of $\tilde x:=p(x) \in \p A$. More precisely, let
the curve $\gamma: (-R^{1/2},R^{1/2}) \to \p A$ be a parametrization
by arclength of a neighborhood of $\tilde x$ in $\p A$ with
$\gamma(0)=\tilde x$. Then, for all $R \le R_A$ with some
  $R_A > 0$ the function
\begin{align}
\begin{aligned}
   \Psi(\sigma,\rho) := \gamma(\sigma) + \nu(\gamma(\sigma)) \rho
    \end{aligned}
\end{align}
is a diffeomorphism from $(-R^{1/2},R^{1/2}) \times (-R,R)$ onto its
image, which we denote by $\Gamma_{\tilde x}$. The choice $R^{1/2}$
will become clear later. Note that due to compactness of $\p A$, we
may choose $R_A$ independent of $\tilde x$. A transformation of
variables then yields
\begin{align}\label{eq:trafo-trans}
  \Int_{\{y:\, d(y)=\rho\} \cap \Gamma_{p(x)}} \frac{1}{|x-y|^3} \dd
  \HH^1(y) = \int_{-R^{1/2}}^{R^{1/2}}
  \frac{(1+\kappa(\gamma(\sigma))\rho)}{|\Psi(0,\rho') -
  \Psi(\sigma,\rho)|^3} \dd \sigma, 
\end{align}
where $\kappa(\tilde y)$ denotes the signed curvature of $\p A$ at
$\tilde y$ (negative if $A$ is convex). Since the curvature of
$\p A$ is bounded, there is, for any $\delta >0$, an $ R_{\delta,A}>0$
such that for all $R \le R_{\delta,A}$ we have
\begin{align}\label{eq:bounded-curv}
  |\kappa|R \le \delta 
  && \text{and} 
  &&
     |\Psi(0,\rho') -
     \Psi(\sigma,\rho)| \le
     (1+\delta)\sqrt{\sigma^2 +
     (\rho-\rho')^2}. 
\end{align}
We conclude that, for any $\tilde \delta>0$, there is an
$\tilde R_{\tilde \delta,A}  > 0$ such that for all
$R \le \tilde R_{\tilde \delta,A}$ and all $\rho, \rho' \in (-R,R)$
we have
\begin{align}
\begin{aligned}
\label{eq:tang-int-cl}
\Int_{\{y:\, d(y)=\rho\} \cap \Gamma_{p(x)}}& \frac{1}{|x-y|^3} \dd
\HH^1(y) \lupupref{eq:trafo-trans}{eq:bounded-curv}\ge (1-\tilde
\delta)\int_{-R^{1/2}}^{R^{1/2}}\frac{1}{\left(\sigma^2 + (\rho -
    \rho')\right)^{3/2}} \dd \sigma\\
&= (1-\tilde \delta) \frac{2}{(\rho - \rho')^2}\frac{R^{1/2}}{\sqrt{R
    + (\rho-\rho')^2}} \ge (1-2\tilde \delta) \frac{2}{(\rho -
  \rho')^2}.
\end{aligned}
\end{align}
Integrating \eqref{eq:tang-int-cl} over $x$ and invoking
\eqref{eq:lim-AH} we obtain \eqref{eq:tang-int}.
\end{proof}

\subsection{Proof of Theorem \ref{th:SUPERcrit-F}}
\label{sec:SUPERcrit-proof-F}
We begin with the proof of the lower bound in Theorem
\ref{th:SUPERcrit-F}, which is the subject of Lemma
\ref{le:F-SUPERcrit-lb}. The proof of Theorem \ref{th:SUPERcrit-F} is
completed with the construction of the upper bound, carried out in
Lemma \ref{le:F-SUPERcrit-ub}.

\begin{lem}\label{le:F-SUPERcrit-lb}
  Let $\lambda_{c} := \frac{\pi}{2}$ and $F_{\eps,\lambda}$ as defined
  in \eqref{eq:F}. Then there is a universal constant
  $\delta >0$ such that for all $\eps < 1/2$ and all
  \begin{align}\label{eq:delta-lambda}
    \lambda_c \le \lambda < \delta|\log \eps| 
  \end{align}
  the family of functionals $\{F_{\eps, \lambda}\}$ is bounded below
  by
  \begin{align}
    \label{eq:scaling-opt-below}
    \min F_{\eps,\lambda} \gtrsim -\frac{\lambda
    \eps^{\frac{\lambda_c - \lambda}{\lambda}}}{|\log \eps|}. 
  \end{align}
  Moreover, the profiles achieving the optimal scaling can be
  characterized as follows: For any $\gamma >0$ and all
  $m \in H^1(\T^2;\S^2)$ which satisfy
  \begin{align}\label{eq:scaling-opt}
    F_{\eps, \lambda}[m] \le -\frac{\lambda \eps^{\frac{\lambda_c -
    \lambda}{\lambda}}}{|\log \eps|} \gamma, 
  \end{align}
  there holds
  \begin{align}\label{eq:scaling_var_supercrit_i-2}
    \begin{aligned}
      \Int_{\T^2} |\nabla m_{3}| \dd x \leq \Int_{\T^2} \left(
        \frac{\eps}{2} |\nabla m|^2 + \frac{1-m_{\eps,3}^2}{2\eps}
      \right) \dd x \leq \frac{\lambda}{|\log
        \eps|} \Int_{\T^2}|\nabla^{1/2} m_{3}|^2 \dd x,
    \end{aligned}
  \end{align}
  and the above quantities agree to leading order and scale like
  $\eps^{\frac{\lambda_c - \lambda}{\lambda}}$, i.e. if $A$ and $B$
  are any of the three quantities in
  \eqref{eq:scaling_var_supercrit_i-2}, we have
  \begin{align}\label{eq:scaling_var_supercrit-2}
    A  \sim \eps^{\frac{\lambda_c - \lambda}{\lambda}} \qquad
    \text{and} \qquad |A-B| \lesssim \frac{\lambda}{|\log
    \eps|}A, 
  \end{align}
  where the the constants may depend on $\gamma$.
\end{lem}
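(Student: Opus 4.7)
The plan is to reduce the analysis to a one-parameter optimization by applying the refined $H^{1/2}$-estimate (Lemma \ref{le:main_ineq}) to the scalar component $m_3$, using $\|m_3\|_\infty \le 1$, and combining it with the pointwise inequality $|\nabla m_3| \le \tfrac{\eps}{2}|\nabla m|^2 + \tfrac{1}{2\eps}(1-m_3^2)$. Writing
\begin{align*}
  B := \int_{\T^2}|\nabla m_3|\dx2, \qquad L := \int_{\T^2}\Bigl(\tfrac{\eps}{2}|\nabla m|^2 + \tfrac{1-m_3^2}{2\eps}\Bigr)\dx2, \qquad H := \int_{\T^2}|\nabla^{1/2}m_3|^2\dx2,
\end{align*}
one obtains $B \le L$ and $\tfrac{\eps}{2}\int|\nabla m_3|^2 \le L$, so that Lemma \ref{le:main_ineq} yields $H \le L + \tfrac{2}{\pi}\log(c_*M)B$ with $M := \max\{1,\min\{1/(\eps B), 1/\eps\}\}$. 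Substituting into $F_{\eps,\lambda}[m] = L - \tfrac{\lambda}{|\log\eps|}H$ and then using $L \ge B$ delivers the master inequality
\begin{align*}
  F_{\eps,\lambda}[m] \;\ge\; B\!\left[1 - \frac{\lambda}{|\log\eps|} - \frac{\lambda\log(c_* M)}{\lambda_c|\log\eps|}\right].
\end{align*}

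I would then split into three regimes in $B$. The extreme cases $B \le 1$ (where $M = 1/\eps$, giving $F \gtrsim -\lambda/\lambda_c$) and $B \ge 1/\eps$ (where $M = 1$, giving $F \ge 0$) are both negligible compared with the target scaling. The main regime is $B = \eps^{-s}$ with $s \in [0,1]$: here $M = \eps^{s-1}$, $\log(c_*M) = (1-s)|\log\eps| + O(1)$, and the master inequality becomes, up to a relative $O(\lambda/|\log\eps|)$ correction,
\begin{align*}
  F_{\eps,\lambda}[m] \;\ge\; \frac{\varphi(s)}{\lambda_c}, \qquad \varphi(s) := \eps^{-s}\bigl[\lambda s - (\lambda-\lambda_c)\bigr].
\end{align*}
The core of the argument is the minimization of $\varphi$ on $(0,1)$. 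The stationarity condition $|\log\eps|\varphi(s) + \lambda \eps^{-s} = 0$ has the unique root $s_* = (\lambda-\lambda_c)/\lambda - 1/|\log\eps|$, at which $\varphi(s_*) = -\lambda\eps^{-s_*}/|\log\eps|$ with $\eps^{-s_*} \sim \eps^{(\lambda_c-\lambda)/\lambda}$, which gives exactly \eqref{eq:scaling-opt-below}. The hypothesis $\lambda < \delta|\log\eps|$ with $\delta$ small keeps $s_* \in (0,1)$ and ensures the cumulative error remains a fixed fraction of the main term.

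For the structural statement, the change of variables $s = s_* + u$, $v := u|\log\eps|$ yields the clean identity $\varphi(s)/\varphi(s_*) = e^v(1 - v)$, which is unimodal on $\R$ with maximum $1$ at $v = 0$ and unique zero at $v = 1$. Hence the near-optimality $F_{\eps,\lambda}[m] \le -\gamma\lambda\eps^{(\lambda_c-\lambda)/\lambda}/|\log\eps|$ confines $v$ to a bounded interval $[v_-(\gamma), v_+(\gamma)]$ depending only on $\gamma$, so that $B = e^v \eps^{-s_*} \sim \eps^{(\lambda_c-\lambda)/\lambda}$. Revisiting the master inequality \emph{without} the step $L \ge B$ then forces $L\,(1-\lambda/|\log\eps|) \le \tfrac{\lambda\log(c_*M)}{\lambda_c|\log\eps|}B + F$; combined with $L \ge B$ and the just-proved sharp bounds on $B$, this produces $|L - B| \lesssim \lambda B/|\log\eps|$, and since $\tfrac{\lambda}{|\log\eps|}H$ is sandwiched between $L - F$ and $L + \tfrac{\lambda\log(c_*M)}{\lambda_c|\log\eps|}B$, the same relative bound propagates to it, yielding \eqref{eq:scaling_var_supercrit-2}. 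The principal obstacle will be tracking these $\lambda/|\log\eps|$ error terms uniformly for $\lambda$ as large as $\delta|\log\eps|$, which relies on the quantitative convexity of $e^v(1-v)$ around $v = 0$ together with the non-degeneracy of the linear factor $\lambda s - (\lambda - \lambda_c)$ at $s = s_*$.
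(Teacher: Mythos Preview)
Your approach is correct and essentially the paper's, recast in different variables: your parametrization $B = \eps^{-s}$ and the identity $\varphi(s)/\varphi(s_*) = e^v(1-v)$ amount to the paper's substitution $\mu := \eps^{(\lambda-\lambda_c)/\lambda}B$ combined with $\sup_{\mu>0}\mu\log(c_{**}/\mu) = c_{**}/e$ (indeed $\mu = e^{v-1}$ once the constants are unpacked). The paper is slightly more economical in one respect: after disposing of $B \ge 1/\eps$, it uses the single bound $\log(c_* M) \le \log\bigl(c_*/(\eps B)\bigr)$, valid for all $0 < B < 1/\eps$, so one optimization over $\mu > 0$ replaces both of your regimes $B \le 1$ and $1 \le B \le 1/\eps$. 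This matters because your parenthetical claim that $\lambda < \delta|\log\eps|$ keeps $s_* \in (0,1)$ is false for $\lambda$ close to $\lambda_c$ (then $s_* < 0$); your argument still goes through since the case $B \le 1$ absorbs it, but that case is not as immediately ``negligible'' as you indicate --- one needs $(\lambda-\lambda_c)/\lambda_c + O(\lambda/|\log\eps|) \lesssim \lambda\eps^{(\lambda_c-\lambda)/\lambda}/|\log\eps|$, which boils down to $w \lesssim e^w$ with $w = (\lambda-\lambda_c)|\log\eps|/\lambda$.
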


\begin{proof}
  By \eqref{eq:main_ineq_1d}, we may bound the energy from below by
  \begin{align}
    \begin{aligned}\label{eq:lower-bound}
      F_{\eps,\lambda}[m] &\lupref{eq:main_ineq_1d}\ge
      \left(1-\frac{\lambda}{|\log \eps|}\right)\int_{\T^2}
      \frac{\eps}{2} |\nabla m|^2 +\frac{1}{2\eps}(1-m_3^2) \dd x\\
      &\quad -\frac{\lambda}{\lambda_c}\frac{\log
        \left(c_*\max\left\{1,\min\left\{\frac{1}{\eps\int_{\T^2}|\nabla
                m_3| \dd x},
              \frac{1}{\eps}\right\}\right\}\right)}{|\log
        \eps|}\int_{\T^2} |\nabla m_3| \dd x.
    \end{aligned}
  \end{align}
  Without loss of generality, we may assume that
  $\int_{\T^2}|\nabla m_3| \dd x>0$. We first consider the case
  $\min \{\frac{1}{\eps \int_{\T^2}|\nabla m_3| \dd x},
  \frac{1}{\eps}\} \le 1$,
  for which, with the help of \eqref{eq:A2}, the estimate in
  \eqref{eq:lower-bound} turns into
  \begin{align}\label{eq:lower-bound-1}
    F_{\eps,\lambda}[m] \ge \left(1-\frac{\lambda \log(c_*^{1/\lambda_c})}{|\log
    \eps|}\right)\int_{\T^2}|\nabla m_3| \dd x
    \lupref{eq:delta-lambda} \ge
    \left(1- C \delta\right)\int_{\T^2}|\nabla m_3| \dd x 
  \end{align}
  for some universal constant $C>0$. For $\delta < 1/C$, the right
  hand side of \eqref{eq:lower-bound-1} is positive and the lower
  bound follows. Hence, we may assume
  $\min \{\frac{1}{\eps \int_{\T^2}|\nabla m_3| \dd x},
  \frac{1}{\eps}\} > 1$ so that \eqref{eq:lower-bound} implies
  \begin{align}
    \begin{aligned}\label{eq:lower-bound1}
      F_{\eps,\lambda}[m] &\ge \left(1-\frac{\lambda}{|\log
          \eps|}\right)\int_{\T^2} \frac{\eps}{2} |\nabla m|^2
      +\frac{1}{2\eps}(1-m_3^2) \dd x\\
      &\quad -\frac{\lambda}{\lambda_c}\frac{\log
        \left(\frac{c_*}{\eps\int_{\T^2}|\nabla m_3| \dd
            x}\right)}{|\log \eps|}\int_{\T^2} |\nabla m_3| \dd x.
    \end{aligned}
  \end{align}

  Abbreviating the energetic cost for $m$ to deviate from the optimal
  Bloch wall profile by
  \begin{align}
    D_\eps[m]:= \int_{\T^2} \frac{\eps}{2} |\nabla m|^2 +
    \frac{1}{2\eps}(1-m_3^2) \dd x - \int_{\T^2} |\nabla m_3| \dd x, 
  \end{align} and inserting $\mu :=
  \eps^{\frac{\lambda-\lambda_c}{\lambda}}\int_{\T^2}|\nabla m_3| \dd x
  $ and $c_{**}:=c_*e^{\lambda_c}$ into the lower bound in
  \eqref{eq:lower-bound1}, we get 
  \begin{align}
    \begin{aligned}\label{eq:lower-bound2}
      F_{\eps,\lambda}[m] &\ge \left(1-\frac{\lambda}{|\log
          \eps|}\right)D_\eps[m] -\frac{\lambda}{\lambda_c}\frac{\log
        \left(\frac{c_{**}}{\mu}\right)}{|\log \eps|}\mu
      \,\eps^{\frac{\lambda_c - \lambda}{\lambda}}.
    \end{aligned}
  \end{align}
  Since $\sup_{\mu > 0} \mu \log ( c_{**} / \mu) = c_{**} / e$,
  and since $D_{\eps}[m] \ge 0$ by \eqref{eq:A2}, the lower bound in
  \eqref{eq:scaling-opt-below} follows.

  We now turn to the proof of \eqref{eq:scaling_var_supercrit-2}. Note
  that \eqref{eq:A2} and $F_{\eps,\lambda}[m] \le 0$ yield
  \begin{align}
    \int_{\T^2} |\nabla m_3| \dd x \le \int_{\T^2} \left( \frac{\eps}{2}
    |\nabla m|^2 + \frac{1-m_3^2}{2\eps} \right) \dd x \le
    \frac{\lambda}{|\log \eps|} \int_{\T^2} |\nabla^{1/2}m_3| \dd x. 
  \end{align}
  For \eqref{eq:scaling_var_supercrit-2} it is hence sufficient to
  show
  \begin{align}\label{eq:scaling_var_supercrit_suff}
    \int_{\T^2} |\nabla m_3| \dd x \sim
    \eps^{\frac{\lambda_c-\lambda}{\lambda}} \quad \text{and} \quad
    \frac{\lambda}{|\log \eps|} \int_{\T^2} |\nabla^{1/2}m_3| \dd x -
    \int_{\T^2} |\nabla m_3| \dd x \lesssim \frac{\lambda
    \eps^{\frac{\lambda_c-\lambda}{\lambda}}}{|\log \eps|},
  \end{align}
  where here and in the rest of the proof the constants may depend on
  $\gamma$.  We combine the lower bound for the energy
  \eqref{eq:lower-bound2} with the upper bound \eqref{eq:scaling-opt}
  to obtain $\mu \log(c_{**}/\mu) \gtrsim 1$, which in turn implies
  $\mu \sim 1$. Hence, the first item in
  \eqref{eq:scaling_var_supercrit_suff} may be estimated as
  \begin{align*}
    \int_{\T^2} |\nabla m_3| \dd x = \mu
    \eps^{\frac{\lambda_c-\lambda}{\lambda}} \sim
    \eps^{\frac{\lambda_c-\lambda}{\lambda}}. 
  \end{align*}
  For $\delta >0$ sufficiently small universal and
  $\mu \sim 1$, the second item in
  \eqref{eq:scaling_var_supercrit_suff} follows from
  \eqref{eq:lower-bound2}:
  \begin{align}
    \frac{\lambda}{|\log \eps|} \int_{\T^2} |\nabla^{1/2}m_3| \dd x -
    \int_{\T^2} |\nabla m_3| \dd x = -F_{\eps,\lambda}[m] +D_{\eps}[m]
    \lupref{eq:lower-bound2} \lesssim  \frac{\lambda
    \eps^{\frac{\lambda_c-\lambda}{\lambda}}}{|\log \eps|}. 
  \end{align}
  This concludes the proof.
\end{proof}

\begin{lem}[Upper bound in the supercritical
  regime]\label{le:F-SUPERcrit-ub}
    There is a constant $0<K<1$ such that for every $(\eps,\lambda)$ with
    \begin{align}\label{eq:SUPERcrit-F-regime}
      \lambda_c< \lambda \qquad \text{and}\qquad 0<\eps^{\frac{\lambda
      - \lambda_c}{\lambda}} < K, 
    \end{align}
    there is $m_{\eps,\lambda} \in H^1(\T^2;\S^2)$ which satisfies
    \begin{align}
      F_{\eps,\lambda}[m_{\eps,\lambda}] \lesssim -\frac{\lambda
      \eps^{\frac{\lambda_c - \lambda}{\lambda}}}{|\log \eps|}. 
    \end{align}
\end{lem}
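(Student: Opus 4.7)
The plan is to use a one-dimensional stripe ansatz built from the near-optimal Bloch walls $\xi_{\eps,R}$ of Lemma~\ref{le:opt}. Specifically, for a positive integer $N$ to be optimized later, set $R := 1/(2N)$, let $d \colon \R \to \R$ be the $1$-periodic odd extension of the sawtooth $s \mapsto s$ on $[-R,R]$, and define
\begin{align*}
m_{\eps,\lambda}(x_1,x_2) := \xi_{\eps,R}(d(x_1))\, e_3 + \sqrt{1-\xi_{\eps,R}^2(d(x_1))}\, e_2.
\end{align*}
By \eqref{eq:eta_eps}, $\xi_{\eps,R}$ saturates to $\pm 1$ strictly inside $(-R,R)$, so $m_{\eps,\lambda}\in H^1(\T^2;\S^2)$ is Lipschitz and piecewise smooth. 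The heuristic in Section~\ref{sec:heuristic} suggests the optimal $N$ is of order $\eps^{-(\lambda-\lambda_c)/\lambda}$; the hypothesis $\eps^{(\lambda-\lambda_c)/\lambda} < K$ ensures this value is bounded below by a constant so that an admissible positive integer choice exists.

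For the local part of the energy a direct computation gives $|\nabla m_{\eps,\lambda}|^2 = |\xi'_{\eps,R}(d)|^2/(1-\xi_{\eps,R}^2(d))$ a.e., and Fubini together with \eqref{eq:constr_local_energy} applied to each of the $N$ transition layers in one period yields
\begin{align*}
\int_{\T^2}\left(\tfrac{\eps}{2}|\nabla m_{\eps,\lambda}|^2 + \tfrac{1}{2\eps}(1-m_{\eps,\lambda,3}^2)\right) dx \le 2N + \tfrac{\pi^2}{2}\eps N^2.
\end{align*}
For the negative nonlocal term we observe that $m_{\eps,\lambda,3}$ depends only on $x_1$: integrating out $x_2$ and $z_2$ in the real-space representation of the $H^{1/2}$ seminorm on $\T^2$ reduces it to a one-dimensional double integral. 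Localizing $x$ and $z$ to the disjoint neighborhoods of width $\lesssim R$ surrounding each of the $N$ walls, a change of variables puts each piece into the form of \eqref{eq:constr_nonlocal_energy} with $X = 1/(4N)$, giving
\begin{align*}
\int_{\T^2} |\nabla^{1/2} m_{\eps,\lambda,3}|^2\, dx \ge \tfrac{2N}{\lambda_c}\log\!\bigl(c/(4N\eps)\bigr)
\end{align*}
as long as $N \lesssim 1/\eps$.

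Combining the two bounds produces
\begin{align*}
F_{\eps,\lambda}[m_{\eps,\lambda}] \le 2N\Bigl(1 - \tfrac{\lambda}{\lambda_c |\log\eps|} \log\!\bigl(c/(4N\eps)\bigr)\Bigr) + O(\eps N^2).
\end{align*}
Writing $N = \eps^{-s}$ and minimizing the leading term over $s$ yields the optimum $s^* = (\lambda-\lambda_c)/\lambda - 1/|\log\eps|$, corresponding to $N^* \sim \eps^{(\lambda_c-\lambda)/\lambda}$, and principal value of order $-\lambda\eps^{(\lambda_c-\lambda)/\lambda}/|\log\eps|$. A routine check shows that the remainder $\eps N^2 \sim \eps^{(2\lambda_c-\lambda)/\lambda}$ is subdominant throughout the admissible range \eqref{eq:SUPERcrit-F-regime}, since its ratio to the main term is $\sim \eps^{\lambda_c/\lambda}|\log\eps|/\lambda$, which is small. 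Rounding $N^*$ to a nearby positive integer (permissible because of the $K$-hypothesis) completes the construction.

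The main technical obstacle is achieving the sharp lower bound on the nonlocal term with the precise constant $4/\pi = 2/\lambda_c$: without it the negative $H^{1/2}$ contribution cannot defeat the Bloch-wall cost $2N$ for $\lambda$ only slightly above $\lambda_c$, and the optimization yields the wrong scaling. This is why we must reduce the two-dimensional nonlocal expression to the one-dimensional integrals of Lemma~\ref{le:opt} rather than use a cruder estimate, and why the localization of the $1$D double integral on disjoint neighborhoods of individual walls is essential to avoid spurious interaction losses between neighboring walls.
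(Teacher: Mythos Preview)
Your approach is essentially the paper's: the same one-dimensional $N$-stripe ansatz built from Lemma~\ref{le:opt} profiles, the same reduction of the nonlocal term to the one-dimensional estimate \eqref{eq:constr_nonlocal_energy} wall-by-wall, and the same optimization yielding $N\sim\eps^{(\lambda_c-\lambda)/\lambda}$. The only difference is that the paper uses $\xi_{\eps,\infty}$ (the $\tanh$ profile), which gives the clean local bound $\le 2N$ without your extra $O(\eps N^2)$ term; your finite-$R$ choice still works, but note that your claimed ratio $\eps^{\lambda_c/\lambda}|\log\eps|/\lambda$ is merely bounded by the universal constant $1/(e\lambda_c)<1$ rather than small, so the remainder is controlled by taking the prefactor in $N$ small enough rather than by asymptotic negligibility.
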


\begin{proof}
  We make an ansatz with $N$ transitions equally separated by
  $1/N$-sized regions of approximately constant magnetization. More
  precisely, we take the transitions as solutions of the optimal
  profile ODE and define
\begin{align}
m_{\eps,N}(x_1,x_2) =
\begin{cases}
  \xi_{\eps,\infty}\left(\tfrac{x_1-\tfrac{1}{2N}}{\eps}\right){e_3} +
  \sqrt{1-\xi_{\eps,\infty}^2\left(\tfrac{x_1-\tfrac{1}{2N}}{\eps}\right)}{e_2},
  &
  \text{for } x_1 \in \left[0,\tfrac1N\right]\\
  \xi_{\eps,\infty}\left(\tfrac{\tfrac{3}{2N}-x_1}{\eps}\right){e_3} +
  \sqrt{1-
    \xi_{\eps,\infty}^2\left(\tfrac{\tfrac{3}{2N}-x_1}{\eps}\right)}{e_2},
  & \text{for } x_1 \in \left[\tfrac1N,\tfrac2N\right]
\end{cases}
\end{align}
extended periodically to $\T^2$ (see Fig. \ref{fig:ansatz}). Applying
Lemma \ref{le:opt} with $X=\frac{1}{2N}$ and using symmetries of
$m_{\eps,N}$, we get
\begin{align}\label{eq:construction}
  \int_{\T^2} \left( \frac{\eps}{2} |\nabla m_{\eps,N}|^2 +
  \frac{1-m_{(\eps,N),3}^2}{2\eps} \right) \dd x \le 2N  
\end{align}
and, for all $\eps <\tfrac{1}{4N}$, we have 
\begin{align}
\label{eq:construction_2}
  \int_{\T^2}
  &|\nabla^{1/2}m_{(\eps,N),3}|^2 \d2
    \lupref{eq:A4}={\frac{1}{4\pi}\int_{\T} \int_{\R} \int_{\R}
    \frac{|m_{(\eps,N),3}(x_1)-m_{(\eps,N),3}(y_1)|^2}{(|x_1-y_1|^2
    + s^2)^{3/2} }\dd s \, \dd x_1 \dd y_1} \notag\\
  &\ge\frac{1}{2\pi}\sum_{k=1}^N
    \int_{\frac{k-1}{N}}^{\frac{k}{N}}\int_{\frac{k-1}{N}}^{\frac{k}{N}}
    \frac{|m_{(\eps,N),3}(x_1)-m_{(\eps,N),3}(y_1)|^2}{|x_1-y_1|^2}\dd  
    x_1 \dd y_1 \\
  &=
    \frac{N}{4\lambda_c}\int_{-\frac{1}{2N}}^{\frac{1}{2N}}
    \int_{-\frac{1}{2N}}^{\frac{1}{2N}}  
    \frac{|\xi_{\eps,\infty}(x)-
    \xi_{\eps,\infty}(y)|^2}{|x-y|^2} \dd x \dd y
  \lupref{eq:constr_nonlocal_energy}\ge
  2N\frac{\log(\tfrac{c}{2 \eps N})}{\lambda_c}.\notag
\end{align}
To obtain the upper bound, we combine estimates
\eqref{eq:construction} and \eqref{eq:construction_2} and optimize in
$N \in \N$. The choice
$N :=2\left\lfloor{K \eps^{\frac{\lambda_c -
        \lambda}{\lambda}}}\right\rfloor$ is admissible because
$N \lupref{eq:SUPERcrit-F-regime}\ge 2$ and
$\eps N \le 2K\le \tfrac{1}{4}$ for
$K\le\tfrac{1}{8}\min\{1,c\}$. Since $0<\eps<1$, we get
\begin{align}
\begin{aligned}\label{eq:e-c}
  F_{\eps,\lambda}[m_{\eps,N}] \le 2N\left(1-\frac{\lambda \log
      (\tfrac{c}{2\eps N})}{\lambda_c |\log \eps|}\right) {\leq}
  -\frac{{C} \lambda \eps^{\frac{\lambda_c -
        \lambda}{\lambda}}}{|\log \eps|},
\end{aligned}
\end{align}
for some universal $C > 0$, which is the desired estimate.
\end{proof}

\subsection{Proof of Theorem \ref{th:crit-F}}
\begin{proof}[Proof of Theorem \ref{th:crit-F}]
We start by proving item $(i)$. Inserting \eqref{eq:beta_1} into
the lower bound \eqref{eq:F_SUBcrit_lb}, we get for sufficiently small
$\eps>0$
\begin{align}
  \begin{aligned}
    F_{\eps,\lambda}[m] &\ge \left(1- \frac{\log(\eps
        c)\log(\eps/\beta_1)}{\log(\eps)^2}\right) \int_{\T^2} |\nabla
    m_3| \dx2\\
    &\ge\left(\frac{|\log(\eps)|\log(c/\beta_1) +
        \log(c)\log(\beta_1)}{|\log(\eps)|^2}\right) \int_{\T^2}
    |\nabla m_3| \dx2.
  \end{aligned}
\end{align}
For $\beta_1<c$, the bracket is positive, which shows that the minimal
value of $\min F_{\eps,\lambda}=0$ is only attained for
$m\equiv \pm e_3$.  Since
$\eps^{\frac{\lambda_+(\eps)-\lambda_c}{\lambda_+(\eps)}} \le
\frac{2}{\beta_2}$ for sufficiently small $\eps>0$, the second part
follows from Lemma \ref{le:F-SUPERcrit-ub}.

To proceed, we next establish the estimate
\begin{align}\label{eq:estos}
  \int_{\T^2} |\nabla m_3| \dx2 \lesssim \max\left\{1,|\log
  \eps|F_{\eps,\lambda_c}[m]\right\}. 
\end{align}
It is enough to show that there are constants $C,\eps_0 >0$ such that
for all $\eps \in (0, \eps_0)$ we have
\begin{align}\label{eq:F-grow}
  \int_{\T^2} |\nabla m_3| \dx2 \ge C \quad \implies \quad
  F_{\eps,\lambda_c}[m] \gtrsim \frac{1}{|\log\eps|}\int_{\T^2}
  |\nabla m_3| \dx2. 
\end{align}
Indeed, by \eqref{eq:main_ineq_1d}, we may bound the energy from below
by
\begin{align}
  \begin{aligned}\label{eq:lower-bound-c}
    F_{\eps,\lambda_c}[m_\eps] &\lupref{eq:main_ineq_1d}\ge
    \left(1-\frac{\lambda_c}{|\log \eps|}\right)\int_{\T^2} \left(
      \frac{\eps}{2} |\nabla m_\eps|^2
      +\frac{1}{2\eps}(1-m_{\eps,3}^2) \right)
    \dd x\\
    &\quad -\frac{\log
      \left(c_*\max\left\{1,\min\left\{\frac{1}{\eps\int_{\T^2}|\nabla
              m_{\eps,3}| \dd x},
            \frac{1}{\eps}\right\}\right\}\right)}{|\log
      \eps|}\int_{\T^2} |\nabla m_{\eps,3}| \dd x.
  \end{aligned}
\end{align}
We first consider the case
$\min \{\frac{1}{\eps \int_{\T^2}|\nabla m_{\eps,3}| \dd x},
\frac{1}{\eps}\} \le 1$, for which \eqref{eq:lower-bound-c} turns into
\begin{align}\label{eq:lower-bound-1-c}
  F_{\eps,\lambda_c}[m] \ge \left(1-\frac{\lambda_c + \log(c_*)}{|\log
  \eps|}\right)\int_{\T^2}|\nabla m_{\eps,3}| \dd x \gtrsim
  \int_{\T^2}|\nabla m_3| \dd x.
\end{align}
For the remaining case, we have
$\min \{\frac{1}{\eps \int_{\T^2}|\nabla m_3| \dd x}, \frac{1}{\eps}\}
\ge 1$ and \eqref{eq:lower-bound-c} implies
\begin{align}
  \begin{aligned}\label{eq:lower-bound1-c}
    F_{\eps,\lambda_c}[m_\eps] &\ge \left(1-\frac{\lambda_c}{|\log
        \eps|}\right)\int_{\T^2} \left( \frac{\eps}{2} |\nabla
      m_\eps|^2
      +\frac{1}{2\eps}(1-m_{\eps,3}^2) \right) \dd x\\
    &\quad -\frac{\log \left(\frac{c_*}{\eps\int_{\T^2}|\nabla
          m_{\eps,3}| \dd x}\right)}{|\log \eps|}\int_{\T^2} |\nabla
    m_{\eps,3}| \dd x\\
    &\lupref{eq:A2}\ge -\frac{\log
      \left(\frac{c_{**}}{\int_{\T^2}|\nabla m_3| \dx2}\right)}{|\log
      \eps|}\int_{\T^2}|\nabla m_3| \dx2,
  \end{aligned}
\end{align}
where we have inserted $c_{**}:=c_*e^{\lambda_c}$. The estimate
\eqref{eq:F-grow} follows with the choice $C=
2c_{**}$.
    
With \eqref{eq:estos} at hand, we now prove item $(ii)$, starting with
the lower bound. Let $m_{\eps} \to m$ in $L^1(\T^2)$ for some
$m \in L^1(\T^2;\R^3)$. Lemma \ref{le:F-SUPERcrit-lb} yields 
\begin{align}
  \liminf_{\eps \to 0}F_{\eps,\lambda_c}[m_\eps] 
  \geq 0, 
\end{align}
which proves the lower bound in case that
$m \in L^1(\T^2;\{\pm e_3 \})$. For the remaining case, we may assume
$\int_{\T^2} (1-m_{\eps,3}^2) \dx2 \gtrsim 1$. For sufficiently small
$\eps$, estimates \eqref{eq:main_ineq_1d} and \eqref{eq:estos} then
yield 
\begin{align}
  \begin{aligned}\label{eq:noop}
    \int_{\T^2} (1-m_{\eps,3}^2) \dx2 &\lesssim \eps
    \left(F_{\eps,\lambda}[m_\eps] + \frac{\lambda_c}{|\log
        \eps|}\int_{\T^2}|\nabla^{1/2}m_{\eps,3}|^2 \dx2 \right)\\
    &\lupref{eq:main_ineq_1d}\lesssim \eps
    \left(F_{\eps,\lambda}[m_\eps] + \int_{\T^2}|\nabla m_{\eps,3}|
      \dx2 \right) \lupref{eq:estos} \lesssim \eps \left( 1 +
        |\log \eps|F_{\eps,\lambda_c}[m_\eps] \right),
  \end{aligned}
\end{align}
which implies
$\liminf_{\eps \to 0} F_{\eps,\lambda_c}[m_\eps] = +\infty$ for
$m \in L^1(\T^2; \R^3) \setminus L^1(\T^2;\{\pm e_3 \})$. Since the
construction of the upper bound was already carried out in Lemma
\ref{le:constr}, the proof is complete.
    
To prove item $(iii)$, we again make use of the construction
in Lemma \ref{le:F-SUPERcrit-ub}. However, this time we take
$N = \lfloor \log(|\log \eps|) \rfloor$. Analogous to \eqref{eq:e-c},
we get for sufficiently small $\eps$
\begin{align}
  \begin{aligned}\label{eq:e-c-2}
    F_{\eps,\lambda}[m_{\eps,N}] \le 2N\left(1-\frac{\log
        (\tfrac{2\eps N}{c})}{\log \eps}\right) \lesssim 
    \frac{N \log N}{|\log \eps|} \longrightarrow 0, \quad \text{for }
    \eps \to 0.
    \end{aligned}
    \end{align}
    Therefore, it remains to show that $m_{\eps,N}$ is not
    compact in the strong $L^1$-topology. Since
    $\int_{\T^2} |m_{\eps,N}|^2 \dx2 =1$, any possible limit
    $\tilde m$ of (a subsequence of) $m_{\eps,N}$ in the strong
    topology needs to satisfy $\int_{\T^2} |\tilde m|^2 \dx2
    =1$.
    However, since $\eps N \to 0$ as $\eps \to 0$, it is clear
      that $m_{\eps,N}$ converges weakly to zero in $L^2(\T^2)$,
      leading to a contradiction.

    Finally, item $(iv)$ follows directly from \eqref{eq:noop},
    \eqref{eq:estos} and the compact embedding
    $BV(\T^2) \hookrightarrow L^1(\T^2)$.
\end{proof}

\section{Stray field estimates and reduction of the full energy}
\label{sec:reduction-E-to-F}

The goal of this section is to make the heuristic reduction in section
\ref{sec:heuristic} rigorous. We prove the following
\begin{lem}[Reduction of the energy]\label{le:reduction-of-E}
  There is a universal constant $C>0$ such that energy $E$ is bounded
  below by
  \begin{align}
    \begin{aligned}\label{eq:E-m2d}
      E[m] &\ge \ell^2 t + \left(1-Ct^2\right)\Int_{\T_\ell^2\times
        (0,t)} |\nabla m|^2
      + (Q-1) (m_1^2 + m_2^2)\d3 \\
      &\quad - 2 \Int_{\T_\ell^2\times (0,t)}
      m_3h_{\mathrm{ext}}\d3 - \frac{t^2}{2}
      \Int_{\T_\ell^2}|\nabla^{1/2}\overline{m}_3|^2\d2,
    \end{aligned}
  \end{align}
  where $\overline m(x') = \frac{1}{t}\int_0^t m(x',x_3) \d[x_3]{1}$
  denotes the ${e_3}$-average of the magnetization over $(0,t)$.
\end{lem}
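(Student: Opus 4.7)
First I would peel off the leading constant $\ell^2 t$ by using $|m|^2 = 1$ to rewrite $Q(m_1^2 + m_2^2) = (Q-1)(m_1^2 + m_2^2) + (1-m_3^2)$, which yields
\[
E[m] = \ell^2 t + \int_{\T_\ell^2\times(0,t)}\!\left(|\nabla m|^2 + (Q-1)(m_1^2+m_2^2) - 2 m_3 h_{\mathrm{ext}}\right)\d3 + \mathcal R[m],
\]
with $\mathcal R[m] := \int_{\R^3}|h[m]|^2\d3 - \int_{\T_\ell^2\times(0,t)} m_3^2\d3$. Hence the lemma is equivalent to the lower bound
\[
\mathcal R[m] \ge -\tfrac{t^2}{2}\int_{\T_\ell^2}|\nabla^{1/2}\overline m_3|^2 \d2 - Ct^2 \int_{\T_\ell^2\times(0,t)}\!\left(|\nabla m|^2 + (Q-1)(m_1^2+m_2^2)\right)\d3,
\]
and the plan is to attack this by reducing the $-\int m_3^2$ piece and the $\int|h|^2$ piece separately to the $x_3$-average $\overline m$.

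For the anisotropy remainder, the $L^2(0,t)$ orthogonality of $\overline m_3$ and $m_3 - \overline m_3$ gives $\int m_3^2\d3 = t\int_{\T_\ell^2}\overline m_3^2\d2 + \int(m_3-\overline m_3)^2\d3$, and the one-dimensional Poincaré inequality in $x_3$ (the estimate \eqref{eq:poincare_3}) delivers $\int(m_3 - \overline m_3)^2\d3 \le \pi^{-2} t^2\int(\partial_3 m_3)^2\d3 \le t^2\int|\nabla m|^2\d3$. This converts the anisotropy remainder into the clean surface-charge term $-t\int \overline m_3^2$ plus an error comfortably within the $Ct^2\int|\nabla m|^2$ budget.

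For the stray field, I would invoke Theorem \ref{th:h-m2d}, which provides
\[
\int_{\R^3}|h[m]|^2 \d3 \ge t\int_{\T_\ell^2}\overline m_3^2 \d2 - \tfrac{t^2}{2}\int_{\T_\ell^2}|\nabla^{1/2}\overline m_3|^2 \d2 - Ct^2\int_{\T_\ell^2\times(0,t)}\!\left(|\nabla m|^2 + (Q-1)(m_1^2+m_2^2)\right)\d3.
\]
The key ingredients behind this estimate are: splitting $m = \chi_{(0,t)}\overline m + \tilde m$ with $\tilde m$ of zero $x_3$-average; applying the exact Fourier identity \eqref{eq:h_split_intro} to the slab-uniform piece $\chi_{(0,t)}\overline m$; using the pointwise lower bound $\sigma(s)\ge 1 - s/2$ on the surface-charge multiplier to extract the sharp $-\tfrac{t^2}{2}\|\nabla^{1/2}\overline m_3\|_{L^2}^2$ contribution; bounding the volume-charge summand via the elementary estimate \eqref{eq:lo}; and absorbing the cross-term $\langle h[\chi_{(0,t)}\overline m], h[\tilde m]\rangle$ together with $\|h[\tilde m]\|_{L^2}^2$ by Poincaré in $x_3$ applied to $\tilde m$.

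Combining the identity of the second paragraph with the stray-field lower bound of the third produces the desired estimate on $\mathcal R[m]$ after renaming $C$, which closes the argument. The genuinely delicate point is retaining the sharp coefficient $-\tfrac12$ in front of $\|\nabla^{1/2}\overline m_3\|_{L^2}^2$: this term is leading-order in the $\Gamma$-limit analysis of $F_{\eps,\lambda}$, so any deterioration would break the subcritical $\Gamma$-convergence theorem. Keeping it sharp forces one to expand $\sigma(t|k|)$ precisely to second order while simultaneously controlling every residual Fourier error inside a pure $O(t^2)$ budget of exchange plus anisotropy, which is essentially the content of Theorem \ref{th:h-m2d}.
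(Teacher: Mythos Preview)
Your proof is correct and follows essentially the same strategy as the paper: extract $\ell^2 t$ via the constraint $|m|=1$, and bound the stray field from below by invoking Theorem~\ref{th:h-m2d}. The paper's organization is marginally cleaner, though. Estimate~\eqref{eq:h-m_3} (combined with~\eqref{eq:h-split-approx} and the trivial bound $\int|h[m']|^2\ge 0$) already delivers the stray-field lower bound in terms of the full three-dimensional integral $\int_{\T_\ell^2\times(0,t)} m_3^2\,\dx3$, which then cancels \emph{exactly} against the $-\int m_3^2$ produced by your $|m|=1$ rewriting. Your separate Poincar\'e splitting of $\int m_3^2$ into $t\int\overline m_3^2$ plus a remainder is therefore redundant: the two Poincar\'e errors you introduce (one on the anisotropy side, one implicitly inside the stray-field bound to pass from $\int m_3^2$ to $t\int\overline m_3^2$) simply undo each other. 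Also, for a lower bound one just drops the non-negative volume-charge contribution rather than estimating it via~\eqref{eq:lo}; the $(Q-1)(m_1^2+m_2^2)$ error you carry is harmless but not needed.
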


Note that for two-dimensional magnetizations \eqref{eq:E-m2d} also
holds in the reversed direction if $-C$ is replaced by $C$. Hence the
lower bound is asymptotically sharp. We also remark that a similar
  sharp estimate for the three-dimensional dipolar energy holds for
  thin three-dimensional domains in the whole space \cite{m17}.

For the proof of Lemma \ref{le:reduction-of-E}, which is deferred
until the end of this section, we need several estimates presented in
the following sections.

\subsection{Approximation of $m$ by its $e_3$-average $\overline m$}
\label{sec:m-approx-average}

Since the thickness $t$ of the film is small, the exchange energy
strongly penalizes oscillations of the magnetization in the normal
direction of the film. Hence the averaged magnetization $\overline m$
is a good approximation of $m$, and Assumption $\eqref{eq:I}$ in
section \ref{sec:heuristic} can be made rigorous by the following
Poincaré-type inequality
\begin{align}\label{eq:poincare_3}
    \Int_{\T^2_\ell \times (0,t)} |m - \chi_{(0,t)}\overline m|^2\d3
    &\lesssim t^2 \Int_{\T_\ell^2 \times (0,t)}  |\p_{3} m|^2\d3,
\end{align}
which holds for all $m \in H^1(\T^2_\ell \times (0,t); \R^3)$ and
{can be proved} by standard methods.

\subsection{Approximation of the stray field energy}
\label{sec:stray-field-approx}

In this section, we establish an approximation of the stray field,
i.e. a rigorous version of Assumption $\eqref{eq:I}$. In particular,
we show that for thin films, the difference between the stray field
energy of the averaged magnetization and the stray field energy of the
full magnetization may be estimated by the exchange energy at lower
order. We mention that an estimate of this type already occured in \cite{KohnSlastikov}, however, it is not strong enough for our purpose. The statement of Theorem \ref{th:h-m2d} below is slightly
stronger than what is necessary to prove Lemma \ref{le:reduction-of-E}
and might be of independent interest for other thin film regimes.

\begin{theorem}
    \label{th:h-m2d}
    Let $m \in H^1(\T^2_\ell \times (0,t);\R^3)$,
    then the stray field energy (see \eqref{eq:h}) satisfies
    \begin{align}
      \label{eq:h-split-approx}
      \Bigg| \int_{\T_\ell^2\times \R} |h[m]|^2 \d3 
      &- \int_{\T_\ell^2 \times
        \R} |h[m_3 e_3]|^2 \d3
        - \int_{\T_\ell^2
        \times \R} |h[m']|^2
        \d3 \Bigg|  \notag \\ 
      &\qquad \qquad \lesssim t^2 \int_{\T_\ell^2 \times (0,t)} |\nabla m|^2
        \d3,\\ 
      \Bigg| \int_{\T_\ell^2\times \R} |h[m]|^2 \d3 
      &- \int_{\T_\ell^2} |h[\chi_{(0,t)}\overline m]|^2 \d3 \Bigg| \lesssim
        t^2\int_{\T_\ell^2 \times (0,t)} |\nabla m|^2
        \d3,\label{eq:h-av} 
    \end{align}
  where $m' = m-m_3e_3$ is understood to have values in $\R^3$ with
  $e_3$-component 0.  Moreover, the contributions due to $m_3$ and
  $m'$ may be approximated by 
    \begin{align}
      \label{eq:h-m_3}
      \Bigg| \int_{\T_\ell^2 \times \R} |h[m_3 e_3]|^2 \d3
      &-
        \int_{\T_\ell^2\times
        (0,t)}  m_3^2 \d3
        + \frac{t^2}{2}
        \int_{\T_\ell^2}
        |\nabla^{1/2}\overline
        m_3|^2 \dx2 \Bigg| \notag \\ 
      &\qquad \qquad \lesssim t^2\int_{\T_\ell^2 \times (0,t)} |\nabla m|^2
        \d3, \\ 
      \label{eq:h-m_p}
      \Bigg| \int_{\T_\ell^2 \times \R} |h[m']|^2 \d3 
      &- \frac{t^2}{2}
        \int_{\T_\ell^2}
        |\nabla^{-1/2}
        \nabla'\cdot
        \overline m'|^2 \d2 \Bigg| \lesssim
        t^2 \int_{\T_\ell^2\times
        (0,t)} |\nabla m|^2
        \d3,\\ 
      \int_{\T_\ell^2 \times \R} |h[m']|^2 \d3 
      & \lesssim
        t^2 \int_{\T_\ell^2
        \times (0,t)} \left( |\nabla 
        m|^2 + |m'|^2 \right) \d3. 
     \end{align}
   
\end{theorem}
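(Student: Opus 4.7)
The whole theorem rests on the fact that for magnetizations that are constant in $x_3$ on $(0,t)$ (and zero outside), the stray field energy can be computed \emph{exactly} in Fourier variables, as recalled in \eqref{eq:h_split_intro}. My plan is therefore to first reduce to such ``$x_3$-constant'' magnetizations via \eqref{eq:h-av}, then read off the remaining estimates by Taylor-expanding the Fourier multiplier $\sigma(s)=(1-e^{-s})/s$.

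\emph{Step 1 (master identity).} For $m\in H^1(\T_\ell^2\times(0,t))$ extended by zero to $\T_\ell^2\times\R$, Plancherel in $x'$ and $x_3$ together with the Fourier representation of the Helmholtz projection give
\begin{align*}
\int_{\T_\ell^2\times\R}|h[m]|^2\d3=\frac{1}{\ell^2}\sum_{k'}\frac{1}{2\pi}\int_\R\frac{|k'\cdot\hat a(k',k_3)+k_3\hat b(k',k_3)|^2}{|k'|^2+k_3^2}\,dk_3,
\end{align*}
where $\hat a,\hat b$ are the full Fourier transforms of $m',m_3$. For $m_0:=\chi_{(0,t)}\overline m$ one has $\hat a(k',k_3)=\hat{\overline m'}(k')\tfrac{1-e^{-ik_3t}}{ik_3}$ and similarly for $\hat b$, so $(k'\!\cdot\!\hat a)(k_3\hat b)^\ast$ factors into a $k'$-dependent part times $4\sin^2(k_3t/2)/k_3$, which is odd in $k_3$. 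Integrating against the even kernel $(|k'|^2+k_3^2)^{-1}$ makes the cross term \emph{vanish identically}, yielding the exact splitting into the $\sigma$ and $1-\sigma$ contributions of \eqref{eq:h_split_intro}.

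\emph{Step 2 (reduction to $m_0$, proving \eqref{eq:h-av} and \eqref{eq:h-split-approx}).} Write $\int|h[m]|^2-\int|h[m_0]|^2=\int h[m+m_0]\cdot h[m-m_0]$ and use the identity $\int h[u]\cdot h[v]=-\int u\cdot h[v]$ to rewrite as $-\int(m+m_0)\cdot h[v]$ with $v:=m-m_0$. The key structural input is that $v$ is supported in $(0,t)$ and has zero $x_3$-average, so the Poincaré--Wirtinger inequality $\|v\|_{L^2}\lesssim t\|\partial_3 m\|_{L^2}$ controls $\|h[v]\|_{L^2(\T_\ell^2\times\R)}\le\|v\|_{L^2}\lesssim t\|\nabla m\|_{L^2}$. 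To get the full $t^2$ and not merely $t\|m\|\,\|\nabla m\|$, I would exploit that $m_0$ is $x_3$-constant on $(0,t)$, so
\[
\int m_0\cdot h[v]=t\int_{\T_\ell^2}\overline m\cdot\overline{h[v]|_{(0,t)}}\,d^2x',
\]
and verify, via the Fourier representation of $h[v]$ mode-by-mode and the vanishing first moment of $v$ in $x_3$, that $\|\overline{h[v]|_{(0,t)}}\|_{L^2(\T^2_\ell)}\lesssim t\|\nabla m\|_{L^2}$; this gives the missing factor of $t$. The same argument, applied separately to $m'$ and $m_3e_3$, yields \eqref{eq:h-split-approx} since by Step~1 the cross term of $m_0',m_{0,3}e_3$ is zero and the remaining ``mixed'' contributions involve $v$ and are again bounded by $t^2\int|\nabla m|^2$.

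\emph{Step 3 (Taylor expansions for \eqref{eq:h-m_3}, \eqref{eq:h-m_p}).} By Step~2 it suffices to prove these inequalities for $m=m_0=\chi_{(0,t)}\overline m$. From \eqref{eq:h_split_intro},
\[
\int|h[\chi_{(0,t)}\overline m_3e_3]|^2=\frac{1}{\ell^2}\sum_{k'}t\sigma(t|k'|)|\widehat{\overline m_3}(k')|^2,\qquad \sigma(s)=1-\tfrac{s}{2}+O(s^2)\ \text{as }s\to 0,
\]
so the expansion $t\sigma(t|k'|)=t-\tfrac{t^2|k'|}{2}+r(t|k'|)$ with $|r(s)|\lesssim s^2\min(1,s^{-1})$ produces the claimed leading term $\int_{(0,t)}m_3^2-\tfrac{t^2}{2}\|\nabla^{1/2}\overline m_3\|^2$, with remainder $\lesssim t^3\sum|k'|^2|\widehat{\overline m_3}|^2=t^3\|\nabla\overline m_3\|^2\lesssim t^2\int|\nabla m|^2$ by Jensen. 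The analogous computation using $1-\sigma(s)=s/2+O(s^2)$ on the $(1-\sigma)$-term gives \eqref{eq:h-m_p}, and the coarse bound $1-\sigma(s)\le s$ combined with $|k'\cdot\widehat{\overline m'}|^2\le|k'|^2|\widehat{\overline m'}|^2$ yields the rough estimate at the end of the statement (the $|m'|^2$ term absorbs the low-frequency contribution where $t|k'|\lesssim 1$ fails to be small).

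\emph{Main obstacle.} The delicate point is the quantitative cross-term control in Step~2: the naive Cauchy--Schwarz only gives $t\|m\|_{L^2}\|\nabla m\|_{L^2}$, whereas the theorem requires the sharp $t^2\|\nabla m\|^2$. Overcoming this requires one to use both (a) the vanishing $x_3$-average of $v=m-m_0$, giving one Poincaré factor of $t$, and (b) the $x_3$-independence of $m_0$ on $(0,t)$, which produces a second $t$ through the $x_3$-averaging of $h[v]$. This precise cancellation is what underlies the asymptotic sharpness of Lemma \ref{le:reduction-of-E} and, ultimately, the leading-order $H^{1/2}$-term in the reduced energy $F$.
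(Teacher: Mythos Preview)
Your Steps 1 and 3 are sound and match the paper's ideas (the exact Fourier computation for $x_3$-constant magnetizations and the Taylor expansion of $\sigma$). The organization of Step 2, however, contains a genuine gap.

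You propose to establish \eqref{eq:h-av} \emph{first}, by writing
\[
\int m_0\cdot h[v]=t\int_{\T_\ell^2}\overline m\cdot\overline{h[v]|_{(0,t)}}\,d^2x'
\]
and claiming $\|\overline{h[v]|_{(0,t)}}\|_{L^2(\T^2_\ell)}\lesssim t\|\nabla m\|_{L^2}$. Even if this bound is correct, the subsequent Cauchy--Schwarz gives $t\cdot\|\overline m\|_{L^2(\T^2_\ell)}\cdot t\|\nabla m\|_{L^2}$, i.e.\ $t^2\|\overline m\|\,\|\nabla m\|$, not $t^2\|\nabla m\|^2$. Since the theorem is stated for $\R^3$-valued $m$ (no $|m|=1$ constraint), $\|\overline m\|$ is not controlled by $\|\nabla m\|$. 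Your ``main obstacle'' paragraph correctly diagnoses that two factors of $t$ are needed, but does not address the separate issue that a factor of $\|m\|$ must be traded for $\|\nabla m\|$. Neither the Poincar\'e factor from $v$ nor the $x_3$-averaging of $h[v]$ produces a horizontal derivative on $\overline m$.

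The paper circumvents this by reversing the order: it proves \eqref{eq:h-split-approx}, \eqref{eq:h-m_3}, \eqref{eq:h-m_p} directly on the full $m$ via the Green's function $H_k(z)=|k|^{-1}e^{-|k||z|}$, and deduces \eqref{eq:h-av} \emph{last}. The point is that each of those three estimates has a built-in horizontal derivative: in \eqref{eq:h-split-approx} the cross term carries the factor $ik\cdot\widehat{m'}_k$, and in \eqref{eq:h-m_3}--\eqref{eq:h-m_p} the Taylor remainder of $e^{-|k||z-z'|}$ carries $|k|^2$. After integrating by parts and splitting $m=\chi_{(0,t)}\overline m+u$, the $\overline m\times\overline m$ contribution to the cross term vanishes by antisymmetry of $\partial_zH_k$ (this is your Step~1 observation), and in the remaining terms one uses $|\partial_zH_k|\le 1$, the pointwise bound $|\widehat u_k(z)|\le\int_0^t|\partial_3\widehat m_k|$, and then Young's inequality on the product $|\partial_3\widehat m_k|\cdot|k||\widehat m_k|$ to land on $|\partial_3 m|^2+|\nabla'm|^2$. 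This is how $\|m\|$ never appears. Your route can be repaired, but it requires using the duality $\int m_0\cdot h[v]=\int v\cdot h[m_0]$ and then exploiting that the \emph{oscillation} of $h[m_0]$ across $(0,t)$ (not its size) is controlled by $t|k||\widehat{\overline m}_k|$ --- which is again a derivative of $\overline m$, not $\overline m$ itself; this is a different mechanism from the one you describe.
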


\begin{proof}
  It is sufficient to argue for
  $m \in C_c^\infty(\T_\ell^2 \times \R; \R^3)$, because the general
  case follows by an approximation argument, as we now explain. Since
  $\T_\ell^2 \times (0,t)$ is an extension domain, there exists, for
  every $m \in H^1(\T_\ell^2 \times (0,t);\R^3)$, a sequence
  $(m_n)_{n \in \N}$ with
  $m_n \in C_c^\infty(\T_\ell^2 \times \R; \R^3)$ such that
  $\|m-m_n\|_{L^2(\T_\ell^2 \times \R)} + \|\nabla m - \nabla
  m_n\|_{L^2(\T_\ell^2 \times (0,t))} \to 0$.  It remains to check
  that all terms in \eqref{eq:h-split-approx} -- \eqref{eq:h-m_p} are
  continuous. Note that by \eqref{eq:poincare_3}, we also have
  $\|\overline m_n - \overline m\|_{L^2(\T_\ell^2)} \to 0$.  Moreover,
  $t\int_{\T_\ell} |\nabla \overline m_n|^2 \d2 \lesssim \int_{\T_\ell
    \times (0,t)}|\nabla m_n|^2 \d3$ (see \eqref{eq:A3} in the
  Appendix for a proof). Hence the convergence follows from the
  elliptic estimate
  $\int_{\T_\ell^2 \times \R}|h[m_n-m]|^2 \d3 \le \int_{\T_\ell^2
    \times \R}|m_n-m|^2\d3$ and by interpolation for the terms
  involving fractional derivatives.

  We write the stray field energy in terms of the magnetostatic
  potential $\phi$
  \begin{align}
    \int_{\T_\ell^2 \times\R } |h[m]|^2 \dx3 = -\int_{\T_\ell^2 \times
    \R} \phi\, \nabla \cdot m \d3 \quad \text{where }\Delta \phi =
    \nabla \cdot m \text{ in }\DD'(\T_\ell^2 \times \R). 
    \end{align}
    Upon passing to Fourier series (with respect to the in-plane
    variables), we get
    \begin{align}\label{eq:Fourier-h-energy}
      \int_{\T_\ell^2 \times \R} \phi\, \nabla \cdot m \d3 =
      \frac{1}{\ell^2}\int_{\R }\sum_{k \in
      \frac{2\pi}{\ell}\Z^2}\widehat\phi_k^*(z)\left(\p_z\widehat
      m_{3,k}(z) -ik\cdot \widehat m'_k(z)\right) \, \dz1,   
    \end{align}
    where the Fourier coefficients $\widehat \phi_k:\R \to \mathbb{C}$
    for $k \in {2 \pi \over \ell} \Z^2$ of $\phi$ solve
    \begin{align}
      \p_z^2 \widehat \phi_k - |k|^2 \widehat \phi_k = \p_z\widehat
      m_{3,k} -ik\cdot \widehat m'_k. 
    \end{align}
    We introduce the fundamental solution
    \begin{align}
      H_k(z) = \begin{cases}
        \frac{e^{-|k||z|}}{|k|} & \text{for }k \neq 0,\\
        -|z| & \text{for }k=0,
        \end{cases}
    \end{align}
    which satisfies
    \begin{align}\label{eq:H-id}
      - \p_z^2 H_k + |k|^2H_k = 2\delta \qquad \text{in }\DD'(\R)
      \text{ for all }k \in \Z^2, 
    \end{align}
    where $\delta$ denotes the Dirac measure at 0. The fundamental
    solution allows to rewrite $\widehat \phi_k(z)$ as
    \begin{align}
      \widehat \phi_k(z) = -\frac{1}{2} \int_{\R} H_k(z-z') \left(\p_z
      \widehat m_{3,k}(z')-ik\cdot \widehat m'_k(k,z')\right) \dd
      {z'},  
    \end{align}
    which by \eqref{eq:Fourier-h-energy} leads to the following
    expression for the stray field energy
    \begin{align}
        \begin{aligned}\label{eq:h-fourier}
          \int_{\T_\ell^2 \times\R } |h[m]|^2 \dx3
          &= \frac{1}{2\ell^2} \int_{\R}\int_{\R} \sum_{k \in
            \frac{2\pi}{\ell }\Z^2}(\p_z \widehat m_{3,k}(z)-ik\cdot
          \widehat m'_k(z))^*\\ 
          &\qquad \qquad \qquad \times H_k(z-z')(\p_z \widehat
          m_{3,k}(z')-ik\cdot \widehat m'_k(z')) \dd z \dd z'.
         \end{aligned}
    \end{align}
    To prove \eqref{eq:h-split-approx}, we need to show that the mixed
    terms in \eqref{eq:h-fourier}, i.e. terms of the form
    \begin{align}\label{eq:h-I}
      I:= \frac{1}{\ell^2} \int_{\R}\int_{\R} \sum_{k \in
      \frac{2\pi}{\ell }\Z^2} \p_z \widehat m_{3,k}^* (z)
      H_k(z-z')(ik\cdot \widehat m'_k(z')) \dd z \dd z' 
    \end{align}
    satisfy
    $|I| \lesssim t^2 \int_{\T_\ell^2 \times (0,t)} |\nabla m|^2 \dd
    x$. Integrating by parts in \eqref{eq:h-I}, we get
    \begin{align}\label{eq:p-ip}
      I= -\frac{1}{\ell^2} \int_{\R}\int_{\R} \sum_{k \in
      \frac{2\pi}{\ell }\Z^2} \widehat m_{3,k}^* (z)
      \p_zH_k(z-z')(ik\cdot \widehat m_k'(z')) \dd z \dd z'. 
    \end{align}
    We write $m = \chi_{(0,t)}\overline m + u$ where as usual
    $\overline m(x')= \frac{1}{t}\int_0^t m(x',x_3) \dd x_3$ denotes
    the average of $m$ over in the ${e_3}$-direction.  With this
    notation, \eqref{eq:p-ip} turns into
    \begin{align}
        \begin{aligned}\label{eq:m-av-ins}
          I= -\frac{1}{\ell^2} \int_{\R}\int_{\R} \sum_{k \in
            \frac{2\pi}{\ell }\Z^2}&(\chi_{(0,t)}(z)\widehat{
            \overline m}_{3.k} + \widehat{u}_{3,k}(z))^*
          \p_zH(k,z-z')\\
          &\times \left(ik\cdot \chi_{(0,t)}(z')\widehat{ \overline
              m}'_k + ik\cdot \widehat u'_k(z')\right) \dd z \dd z'.
        \end{aligned}
    \end{align}

    Since $\p_z H_k(z) = -\frac{z}{|z|}e^{-|k||z|}$ is anti-symmetric
    in $z$, we have $\int_0^t \int_0^t \p_zH_k(z-z') \dd z \dd z' = 0$
    which means that upon expanding \eqref{eq:m-av-ins}, the term
    involving $\overline m_3$ and $\overline m'$
    vanishes. Furthermore, we have $|\p_z H_k| \le 1$ and hence the
    remaining terms in \eqref{eq:m-av-ins} may be estimated by
    \begin{align}\label{eq:I-rem-terms}
      |I|&\leq  \frac{1}{\ell^2}\int_{\R} \int_{\R}\sum_{k \in
           \frac{2\pi}{\ell }\Z^2} \left(|\widehat u_{3,k}(z)|\, |k
           \cdot \widehat m'_k(z')| +
           |\chi_{(0,t)}(z)\widehat{\overline m}_{3,k}|\, |k\cdot
           u'_k(z')|\right) \dd z \dd z'. 
    \end{align}
    Note that passing to Fourier series in the in-plane variables
    commutes with taking $e_3$-averages. Thus $\widehat u_{j,k}$
    has ${e_3}$-average
    zero for all $j=1,2,3$ and the intermediate value theorem yields
    $\tau_{j,k}, \rho_{j,k} \in (0,t)$ such that
    $\Re \widehat u_{j,k}(\tau_{j,k}) = 0$ and
    $\Im \widehat u_{j,k}(\rho_{j,k})=0$. By the fundamental
    theorem of calculus, we hence get the estimate
    \begin{align}
        \begin{aligned}\label{eq:ftc-u}
          |\widehat{u}_{j,k}(z)| &\lesssim \int_{0}^t |\p_z
          \widehat m_{j,k}(\tau)| \dd \tau \qquad \text{ for all
          } z \in (0,t) \text{ and }j=1,2,3.
        \end{aligned}
    \end{align}
    Inserting \eqref{eq:ftc-u} into \eqref{eq:I-rem-terms} and using
    Jensen's inequality yields the rough estimate
    \begin{align}
      |I| & \lesssim \sum_{n,j=1}^{3}\frac{t}{\ell^2} \int_{0}^t
            \int_{0}^t \sum_{k \in \frac{2\pi}{\ell }\Z^2}|\p_z
            \widehat m_{j,k}(z)|\, |k|\, |\widehat{ m}_{n,k}(z')| \dd
            z \dd z'. 
    \end{align}
    By Young's inequality and Parseval's identity, we conclude
    \begin{align}
        \begin{aligned}
          |I|&\lesssim \sum_{n,j=1}^{3}\frac{t}{\ell^2}
          \int_{0}^t\int_{0}^t \sum_{k \in \frac{2\pi}{\ell
            }\Z^2}\left(| \p_z \widehat m_{j,k}(z)|^2 +
            |k|^2|\widehat
            m_{n,k}(z')|^2\right) \dd z \dd z'\\
            &\lesssim t^2 \int_{\T_\ell^2 \times (0,t)} |\nabla m|^2 \dd x,
        \end{aligned}
    \end{align}
    which completes the proof of \eqref{eq:h-split-approx}.  Assuming
    for a moment that $\eqref{eq:h-m_3}$ and $\eqref{eq:h-m_p}$ hold,
    identity \eqref{eq:h-av} is obtained as follows. Applying
    \eqref{eq:h-split-approx} to $m$ and $\chi_{(0,t)}\overline m$, we
    get
    \begin{align}
        \begin{aligned}\label{eq:hxt}
          & \Bigg| \int_{\T_\ell^2} |h[m]|^2 \d3 - \int_{\T_\ell^2}
          |h[\chi_{(0,t)}\overline m]|^2 \d3\\
          &- \int_{\T_\ell^2 \times \R} |h[m_3 e_3]|^2 \d3 +
          \int_{\T_\ell^2 \times \R}
          |h[\chi_{(0,t)}\overline m_3 e_3]|^2 \d3\\
          &\qquad - \int_{\T_\ell^2 \times \R} |h[m']|^2 \d3 +
          \int_{\T_\ell^2 \times \R} |h[\chi_{(0,t)}\overline m']|^2
          \d3 \Bigg| \lupref{eq:h-split-approx} \lesssim
          t^2\int_{\T_\ell^2 \times (0,t)} |\nabla m|^2 \d3,
    \end{aligned}
    \end{align}
    where we have also used (see \eqref{eq:A3} in the appendix for a
    proof)
    \begin{align*}
      \int_{\T_\ell^2 \times (0,t)} |\nabla
      \left(\chi_{(0,t)}\overline m\right)|^2 \d3 =
      t\int_{\T_\ell^2} |\nabla' \overline m|^2 \d2 \lupref{eq:A3}\le
      \int_{\T_\ell^2 \times (0,t)} |\nabla m|^2 \d3. 
    \end{align*}
    Applying \eqref{eq:h-m_3} and \eqref{eq:h-m_p} to \eqref{eq:hxt}
    yields the claim
    \begin{align}
    \begin{aligned}
      &\Bigg| \int_{\T_\ell^2} |h[m]|^2 \d3 - \int_{\T_\ell^2}
      |h[\chi_{(0,t)}\overline m]|^2 \d3\\
      &- \int_{\T_\ell^2 \times (0,t)} m_3^2\d3 + \int_{\T_\ell^2
        \times (0,t)}(\chi_{(0,t)}\overline m_3)^2 \d2 \Bigg|
      \lupupref{eq:h-m_3}{eq:h-m_p} \lesssim t^2 \int_{\T_\ell^2
        \times (0,t)} |\nabla m|^2
      \d3\\
      & \Bigg| \int_{\T_\ell^2} |h[m]|^2 \d3 - \int_{\T_\ell^2}
      |h[\chi_{(0,t)}\overline m]|^2 \d3 \Bigg|
      \lupref{eq:poincare_3}\lesssim t^2 \int_{\T_\ell^2 \times
        (0,t)} |\nabla m|^2 \d3.
    \end{aligned}
    \end{align}
    We turn to the proof of \eqref{eq:h-m_3}. Integrating by parts
    twice and inserting \eqref{eq:H-id}, we get
    \begin{align*}
      \begin{aligned}
        \int_{\T^2_\ell \times \R}|h[m_3]|^2 \d3
        \lupref{eq:h-fourier}=&\frac{1}{2\ell^2} \int_{\R}\int_{\R}
        \sum_{k \in \frac{2\pi}{\ell }\Z^2}\p_z \widehat m_{3,k}^*(z)
        H_k(z-z')\p_z \widehat m_{3,k}(z') \dd z \dd z'\\ 
        &= -\frac{1}{2\ell^2}\int_{\R}\int_{\R} \sum_{k \in
          \frac{2\pi}{\ell }\Z^2} \widehat m_{3,k}^*(z)
        \p_z^2H_k(z-z') \widehat m_{3,k}(z') \dd z \dd z'\\ 
        &\lupref{eq:H-id}= \frac{1}{\ell^2}\int_{\R}\sum_{k \in
          \frac{2\pi}{\ell }\Z^2} | \widehat m_{3,k}(z)|^2 \dd z\\ 
        &\qquad - \frac{1}{2\ell^2}\int_{\R}\int_{\R} \sum_{k \in
          \frac{2\pi}{\ell }\Z^2} \widehat m_{3,k}^*(z)
        |k|e^{-|k||z-z'|} \widehat m_{3,k}(z') \dd z \dd z'
        \end{aligned}
    \end{align*}
    Since $|1-e^{-|k||z|}| \le |k|t$ for $z \in (-t,t)$, the last line
    above 
    \begin{align}
      J:=\frac{1}{2\ell^2}\int_{\R}\int_{\R} \sum_{k \in
         \frac{2\pi}{\ell }\Z^2} \widehat m_{3,k}^*(z)
         |k|e^{-|k||z-z'|} \widehat m_{3,k}(z') \dd z \dd z'      
    \end{align}
    may be estimated, with the help of Young's inequality, by
    \begin{align}
    \begin{aligned}
      \Bigg| J - \frac{t^2}{2\ell^2} \sum_{k \in \frac{2\pi}{\ell
        }\Z^2} |k| |\widehat{ \overline{m}}_{3,k}(z)|^2 \Bigg| &
      \lesssim \frac{t}{\ell^2}\int_{0}^{t}\int_{0}^{t} \sum_{k \in
        \frac{2\pi}{\ell }\Z^2} |\widehat m_{3,k}(z)| |k|^2 |\widehat
      m_{3,k}(z')| \dd z \dd z'  \notag \\
      & \lesssim \frac{t^2}{\ell^2}\int_{0}^{t} \sum_{k \in
        \frac{2\pi}{\ell }\Z^2} |k|^2 |\widehat m_{3,k}(z)|^2 \dd
      z, 
       \end{aligned}
    \end{align}
    which by Parseval's identity is equivalent to
    \begin{align}
      \Bigg| J - \frac{t^2}{2} \int_{\T^2_\ell}
      |\nabla^{1/2}\overline m_3|^2 \d2 \Bigg| \lesssim t^2
      \int_{\T_\ell^2 \times(0,t)} |\nabla' m_3|^2 \d3.
    \end{align}
  In total, we get 
    \begin{align}
    \begin{aligned}
      \Bigg| \int_{\T_\ell^2 \times \R} |h[m_3 e_3]|^2 \d3 &-
      \int_{\T_\ell^2 \times (0,t)} m_3^2 \d3 + \frac{t^2}{2}
      \int_{\T^2_\ell} |\nabla^{1/2}\overline m_3|^2 \d3 \Bigg| \\
      &\qquad \lesssim t^2 \int_{\T_\ell^2 \times(0,t)} |\nabla'
      m_3|^2 \d3,
        \end{aligned}
    \end{align}
  which proves \eqref{eq:h-m_3}. We continue with the proof of
  \eqref{eq:h-m_p}. Since $|1-e^{-|k||z|}| \le |k|t$ for
  $z \in (0,t)$, we may insert $|H_k(z-z') - \frac{1}{|k|}| \le t$ for
  $k \neq 0$ into \eqref{eq:h-fourier}
    \begin{align}
      \int_{\T_\ell^2 \times \R} |h[m']|^2 \d3 
      &\lupref{eq:h-fourier}=
        \frac{1}{2\ell^2}
        \int_{\R}\int_{\R}
        \sum_{k \in
        \frac{2\pi}{\ell
        }\Z^2 \setminus
        \{0\}} (k\cdot
        \widehat m'_k(z))^*
        H_k(z-z') k\cdot
        \widehat m'_k(z') \dd
        z \dd z'.
    \end{align}
    This yields
    \begin{align}
      \Bigg| \int_{\T_\ell^2 \times \R} |h[m']|^2 \d3 -
      \frac{t}{2\ell^2} \sum_{k \in \frac{2\pi}{\ell }\Z^2 
      \setminus \{0\}} \frac{|k\cdot \widehat{ \overline
      {m}}'_k|^2}{|k|} \Bigg| \lesssim \frac{t^2}{2\ell^2}
      \int_{\R}\sum_{k \in 
      \frac{2\pi}{\ell }\Z^2} |k\cdot \widehat
      m'_k(z)|^2 \dd z, 
    \end{align}
    which proves the first equality. The second equality follows as in
    \eqref{eq:lo}.
\end{proof}

\begin{proof}[Proof of Lemma \ref{le:reduction-of-E}]
  We invoke Theorem \ref{th:h-m2d} to obtain a lower bound for the
  stray field energy. Combining \eqref{eq:h-split-approx} with
  \eqref{eq:h-m_3} and neglecting the non-negative term
  $\int_{\T_\ell^2 \times \R} |h[m']|^2 \d3$, we get 
    \begin{align}
        \begin{aligned}\label{eq:h-lb}
          \int_{\T_\ell^2} |h[m]|^2 \d3 &\lupref{eq:h-split-approx}\ge
          \int_{\T_\ell^2 \times \R} |h[m_3 e_3]|^2 \d3
          -Ct^2\int_{\T_\ell^2 \times (0,t)} |\nabla m|^2 \d3\\
          &\lupref{eq:h-m_3}\ge \int_{\T_\ell^2\times (0,t)} m_3^2 \d3
          - \frac{t^2}{2} \int_{\T_\ell^2} |\nabla^{1/2}\overline
          m_3|^2 \dx2\\
          & \qquad \qquad -Ct^2\int_{\T_\ell^2 \times (0,t)} |\nabla
          m|^2 \d3,
        \end{aligned}
    \end{align}
    for some universal constant $C>0$.
    Note that estimating $\int_{\T_\ell^2 \times \R} |h[m']|^2 \d3$ by
    zero is reasonable, since \eqref{eq:h-m_p} shows that the term is
    controlled by the exchange and anisotropy energy at lower order.
    Inserting \eqref{eq:h-lb} into the energy $E$ yields
    \begin{align}
    	\begin{aligned}\label{eq:E_lb_1}
          E[m] &\lupref{eq:E}= \Int_{\T_{\ell}^2\times (0, t)}
          \left(|\nabla m|^2 + Q (m_1^2 +m_2^2) - 2
            m_3h_{\mathrm{ext}}\right)\d3 + \Int_{\T_{\ell}^2 \times
            \R} |h|^2 \d3.\\
          &\lupref{eq:h-lb}\ge \Int_{\T_\ell^2\times (0,t)}
          \left(|\nabla m|^2 + Q (m_1^2 + m_2^2) - 2
            m_3h_{\mathrm{ext}}\right)\d3 +\Int_{\T_\ell^2\times
            (0,t)}{m}_3^2\d2\\
          &\quad - \frac{t^2}{2}
          \Int_{\T_\ell^2}|\nabla^{1/2}\overline{m}_3|^2\d2 -
          Ct^2\Int_{\T_{\ell}^2\times (0, t)} |\nabla
          m|^2 \d3.
    	\end{aligned}
	\end{align}
	The constraint $|m|=1$ allows to combine the leading order of
        the stray field energy with the anisotropy energy which leads
        to constant contribution and a renormalized anisotropy term
    \begin{align}\label{eq:regroup}
        \begin{aligned}
          \int_{\T_\ell^2 \times (0,t)}Q(m_1^2 + m_2^2) \dd x +
          t\int_{\T_\ell^2} m_3^2 \dd x &= \ell^2 t + \int_{\T_\ell^2
            \times (0,t)} (Q-1)(m_1^2+ m_2^2) \dd x.
        \end{aligned}
    \end{align}
    Finally, we insert \eqref{eq:regroup} into \eqref{eq:E_lb_1} to
    extract the leading order constant $\ell^2 t$ and conclude the
    claim of Lemma \ref{le:reduction-of-E}
	\begin{align}
	\label{eq:E_lb_2}
    	\begin{aligned}
          E[m] &\ge \ell^2 t + \Int_{\T_\ell^2\times (0,t)}
          \left(|\nabla m|^2 + (Q-1) (m_1^2 + m_2^2) - 2
            m_3h_{\mathrm{ext}}\right)\d3\\
          &\qquad - \frac{t^2}{2}
          \Int_{\T_\ell^2}|\nabla^{1/2}\overline{m}_3|^2\d2 
          -Ct^2\Int_{\T_{\ell}^2\times (0, t)} |\nabla m|^2 \d3,
    	\end{aligned}
	\end{align}
        which completes the proof.
\end{proof}

\section{Proofs for the full energy $E$}
\label{sec:proofs-E}
The proofs for the full energy $E$ are based on the arguments in the
proofs for the reduced energy $F$. We recommend to read section
\ref{sec:proofs-F} first.

Under mild assumptions on $\ell, t,Q$ and $h_{\rm ext}$, weaker than those of Theorems \ref{th:SUBcrit-E} -- \ref{th:CRIT-E}, Lemma \ref{le:main_ineq} and Theorem \ref{th:h-m2d} yield the following estimates for the rescaled energy $J$.
\begin{lem}
    There are universal constants $C,\delta > 0$ such that for $(\ell,t,Q, h_{\rm ext})$ which satisfy
    \begin{align}\label{eq:71-param}
        Q>1,  &&t < \delta \min\{1,\ell\}&& \text{and} &&\frac{\ell}{\sqrt{Q-1}} h_{\rm ext}(\ell x') = g(x')
    \end{align}
    for some $g \in L^1(\T^2)$, the rescaled energy $J$ (see \eqref{eq:J}) satisfies
    \begin{align}
    J[m] &\ge
    \left(1-Ct^2 - \frac{t}{4\sqrt{Q-1}}\right)\Int_{\T^2\times (0,1)} \Bigg(
    \eps|\nabla m|^2 + \frac{1}{\eps}
    (m_1^2 + m_2^2) \Bigg) \d3 \notag\\
    &\qquad + \frac{1}{2\eps t^2(Q-1)}\int_{\T^2 \times (0,1)} |\p_3 m|^2 \dx3 - 2\Int_{\T^2} g\overline
    m_3\d2 \label{eq:71-lb}\\
    &\qquad  - \frac{t}{\pi\sqrt{Q-1}}
    \log
    \left(c_*\max\left\{1,\min\left\{\frac{1}{\eps
        \int_{\T^2}|\nabla m_3| \dd x},
    \frac{1}{\eps}\right\}\right\}\right)\Int_{\T^2}|\nabla\overline{m}_3|\d2, \notag
    \end{align}
    for all $m \in H^1(\T^2\times(0,1);\S^2)$, where we have abbreviated $\eps:= \frac{1}{\ell \sqrt{Q-1}}$. Furthermore, for any $\overline m \in H^1(\T^2;\S^2)$ we have the upper bound
    \begin{align}
        \begin{aligned}
        \label{eq:71-ub}
            J[\chi_{(0,1)}\overline m] &\le (1+Ct^2) \Int_{\T^2} \left(
            \eps|\nabla \overline m|^2 + \frac{1}{\eps} (\overline m_1^2 + \overline m_2^2) \right)
            \d3 \\
            & \qquad - 2 \Int_{\T^2} \overline
            m_3 g\d2 - \frac{t}{2\sqrt{Q-1}}
            \Int_{\T^2}|\nabla^{1/2}\overline{m}_3|^2\d2.
        \end{aligned}
    \end{align}
\end{lem}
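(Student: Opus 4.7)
The plan is to derive the lower bound by combining Lemma \ref{le:reduction-of-E} with the rescaling \eqref{eq:transform} and Lemma \ref{le:main_ineq}, and to derive the upper bound by inserting the two-dimensional lift $m(x) = \chi_{(0,t)}(x_3)\overline m(x'/\ell)$ into $E$ and exploiting the exact Fourier representation \eqref{eq:h_split_intro} of the stray field energy for two-dimensional magnetizations.

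For the lower bound I start from \eqref{eq:E-m2d} and rescale via $(x_1, x_2, x_3) = (\ell y_1, \ell y_2, t y_3)$, setting $\tilde m(y) := m(\ell y_1, \ell y_2, t y_3)$. The gradient decomposes as $|\nabla m|^2 = \ell^{-2}|\nabla' \tilde m|^2 + t^{-2}|\p_3 \tilde m|^2$; the homogeneous $H^{1/2}(\T_\ell^2)$-seminorm rescales by a factor of $\ell$; and the Zeeman term becomes $-2\int g\,\tilde{\overline m}_3 \, \dy2$ by the hypothesis on $h_{\rm ext}$. After dividing by $\ell t \sqrt{Q-1}$ and identifying $\eps = (\ell\sqrt{Q-1})^{-1}$, I apply Lemma \ref{le:main_ineq} to $f = \tilde{\overline m}_3$ with parameter equal to $\eps$ and use $\|\tilde{\overline m}_3\|_\infty \le 1$ to bound $\int |\nabla^{1/2}\tilde{\overline m}_3|^2$ from above by $\tfrac{\eps}{2}\int |\nabla \tilde{\overline m}_3|^2$ plus the logarithmic term. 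Jensen's inequality yields $\int |\nabla \tilde{\overline m}_3|^2 \le \int |\nabla'\tilde m|^2 \le \int |\nabla \tilde m|^2$, so the $\tfrac{\eps}{2}$-piece is absorbed into the rescaled Dirichlet energy, producing the $-\tfrac{t}{4\sqrt{Q-1}}$ correction in the stated coefficient. A direct algebraic check, using $\eps^2(Q-1) = \ell^{-2}$, then shows that the coefficient of $\int|\p_3 \tilde m|^2$ dominates $\tfrac{1}{2\eps t^2 (Q-1)} + (1-Ct^2-\tfrac{t}{4\sqrt{Q-1}})\eps$ provided $t^2 \le c\ell^2$, which is guaranteed by $t < \delta\min(1,\ell)$ for $\delta$ sufficiently small.

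For the upper bound I substitute the constant-in-$x_3$ lift into the full energy. The local terms and the Zeeman term follow by direct computation and rescaling. For the stray field I invoke the exact formula \eqref{eq:h_split_intro}: the bound $\sigma(s) \le 1 - s/2 + s^2/6$ (which follows from the Taylor estimate $e^{-s} \ge 1 - s + s^2/2 - s^3/6$ for $s \ge 0$) applied to the $\overline m_3$-contribution yields $t\int \overline m_3^2 - \tfrac{t^2}{2}\int |\nabla^{1/2}\overline m_3|^2 + \tfrac{t^3}{6}\int |\nabla \overline m_3|^2$, while $1 - \sigma(s) \le s/2$ applied to the $\overline m'$-contribution yields $\tfrac{t^2}{2}\int |\nabla^{1/2}\overline m'|^2$. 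Using $|\overline m|=1$ to recombine $t\int \overline m_3^2$ with the anisotropy extracts the leading constant $\ell^2 t$ and turns the anisotropy coefficient from $Q$ into $Q-1$. The residual $\tfrac{t^3}{6}\int|\nabla\overline m_3|^2$ is absorbed into the exchange at cost $(1+Ct^2)$, and the $\tfrac{t^2}{2}\int|\nabla^{1/2}\overline m'|^2$-term is interpolated via $\int|\nabla^{1/2}f|^2 \le \eta \int|\nabla f|^2 + \tfrac{1}{4\eta}\int f^2$ with $\eta$ chosen so that both pieces are swallowed by the $(1+Ct^2)$-enlarged exchange and anisotropy. Rescaling to $\T^2 \times (0,1)$ then delivers the stated bound.

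The main obstacle lies in the upper bound, and specifically in the absorption of the in-plane divergence contribution $\tfrac{t^2}{2}\int|\nabla^{1/2}\overline m'|^2$: it sits naturally neither in the Dirichlet nor in the anisotropy part of the energy, and a naive Young inequality tends to introduce uncontrolled $(Q-1)^{-1}$-factors. Circumventing this requires either leveraging $|\overline m|=1$ to trade $|\overline m'|$ against $|\nabla \overline m_3|$, or applying Lemma \ref{le:main_ineq} componentwise to $\overline m_1$ and $\overline m_2$ (both bounded by $1$), which is compatible with the logarithmic budget already present in the lower-bound argument. For the lower bound, the delicate point is instead the algebraic compatibility of the $|\p_3\tilde m|^2$-coefficient, which resolves cleanly once the rescaling is carried out.
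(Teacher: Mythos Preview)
Your lower-bound argument is correct and matches the paper's proof exactly: Lemma~\ref{le:reduction-of-E} followed by the anisotropic rescaling and then Lemma~\ref{le:main_ineq}, with the $\tfrac{\eps}{2}\int|\nabla\overline m_3|^2$-piece absorbed via Jensen (this is \eqref{eq:A3}) and the $|\partial_3 m|^2$-coefficient handled by the algebraic check you describe (the paper records this as \eqref{eq:paramss}).

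For the upper bound your route through the exact Fourier multiplier~\eqref{eq:h_split_intro} and the Taylor bound on $\sigma$ is the same computation the paper performs, just unpacked: the paper's one-line proof invokes the reversed form of Lemma~\ref{le:reduction-of-E} (valid for two-dimensional magnetizations by the remark following that lemma), which in turn rests on Theorem~\ref{th:h-m2d}, itself proved via~\eqref{eq:h_split_intro}. So there is no genuine difference in strategy.

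Where your proposal diverges is in the handling of the in-plane stray-field contribution $\int|h[m']|^2$. The paper does \emph{not} use either of your suggested workarounds. Applying Lemma~\ref{le:main_ineq} componentwise to $\overline m_1,\overline m_2$ would produce a term of order $\tfrac{t}{\sqrt{Q-1}}\log(c_*/\eps)\int_{\T^2}|\nabla\overline m_j|$, which is not dominated by $Ct^2$ times the local energy and therefore does not fit into~\eqref{eq:71-ub}; and the constraint $|\overline m|=1$ gives no direct control on $\int|\nabla^{1/2}\overline m'|^2$. Instead the paper absorbs this term through the last estimate in Theorem~\ref{th:h-m2d}, which bounds $\int|h[m']|^2$ directly by $t^2\int_{\T_\ell^2\times(0,t)}(|\nabla m|^2+|m'|^2)$, so that after combining with the anisotropy via $|m|=1$ the error is already of the form $Ct^2(\text{exchange}+\text{anisotropy})$ and no further interpolation is needed. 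Your Taylor-expansion approach reproduces the $\overline m_3$-part of this estimate but not the packaging of the $\overline m'$-part; replacing your interpolation step by a direct appeal to that estimate closes the argument along the paper's lines.
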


\begin{proof}
    The lower bound for $E$ in Lemma \ref{le:reduction-of-E} implies
    a lower bound for the rescaled energy $J$ 
    \begin{align}
    \begin{aligned}\label{eq:71-a}
    J[m] &= \frac{E[m(\ell \cdot, \ell \cdot, t \cdot)] - \ell^2
        t}{\ell t \sqrt{Q-1}} \lupref{eq:E-m2d}\ge
    \left(1-Ct^2\right)\Int_{\T^2\times (0,1)} \Bigg(
    \frac{1}{\ell\sqrt{Q
            -1}}|\nabla' m|^2\\
    &\qquad + \frac{\ell}{t^2\sqrt{Q-1}}|\p_3 m|^2 + \ell\sqrt{Q-1}
    (m_1^2 + m_2^2) \Bigg) \d3\\
    &- \frac{2\ell}{\sqrt{Q-1}}\Int_{\T^2} \overline
    m_3(x')h_{\mathrm{ext}}(\ell x')\d2 - \frac{t}{2\sqrt{Q-1}}
    \Int_{\T^2}|\nabla^{1/2}\overline{m}_3|^2\d2.
    \end{aligned}
    \end{align}
    We insert
    \begin{align*}
    \frac{\ell}{\sqrt{Q-1}} h_{\rm ext}(\ell x') = g(x') \qquad
    \text{and} \qquad \eps= \frac{1}{\ell \sqrt{Q-1}}
    \end{align*}
    to obtain
    \begin{align}
    \begin{aligned}\label{eq:71-b}
    J[m] &\lupref{eq:E-m2d}\ge
    \left(1-Ct^2\right)\Int_{\T^2\times (0,1)} \Bigg(
    \eps|\nabla' m|^2 + \frac{1}{\eps t^2(Q-1)}|\p_3 m|^2 + \frac{1}{\eps}
    (m_1^2 + m_2^2) \Bigg) \d3\\
    &\qquad - 2\Int_{\T^2} g\overline
    m_3\d2 - \frac{t}{2\sqrt{Q-1}}
    \Int_{\T^2}|\nabla^{1/2}\overline{m}_3|^2\d2.
    \end{aligned}
    \end{align}
    In view of \eqref{eq:71-param} we may assume that
    \begin{align}\label{eq:paramss}
        (1-Ct^2)\left(\frac{1}{\eps t^2(Q-1)} - \eps\right) \ge \frac{1}{2\eps t^2(Q-1)}.
    \end{align}
    Hence, applying Lemma \ref{le:main_ineq} to the last term in \eqref{eq:71-b} and inserting \eqref{eq:paramss} we arrive at \eqref{eq:71-lb}. The proof for the upper bound \eqref{eq:71-ub} is simpler and analogous to the arguments that led to \eqref{eq:71-b}.
\end{proof}

\subsection{Proof of Theorem \ref{th:SUBcrit-E}}
    \label{sec:SUBcrit-proof-E}

It is possible to invoke the lower bound for $F$ on slices $\{x_3=const\}$ to
obtain the lower bound for the full (rescaled) energy $J$. However, we will
not pursue this option. Instead, we apply the $H^{1/2}$-bound of
Lemma \ref{le:main_ineq} directly and extend the arguments of the
previous section. The reason is related to the fact that
$C^\infty(\T^2\times(0,1); \S^2)$ is not dense in
$H^1(\T^2\times(0,1); \S^2)$, which can be seen by considering $f(x)=\frac{x}{|x|}$ (see
\cite{bethuel1988density,bethuel91sobolev,hang2001topology}). Hence,
evaluating Sobolev functions on slices $\{x_3=const\}$ and
confirming that the constraint $|m|=1$ still holds requires to use
the precise representative of a Sobolev function and gets
rather technical.

    \begin{proof}[Proof of the lower bound and compactness in Theorem
      \ref{th:SUBcrit-E}]
      Our starting point is the lower bound \eqref{eq:71-lb}. It turns out to be more convenient to use the parameter $\eps=\frac{1}{\ell \sqrt{Q-1}}$ instead of $\ell$. We first note that for $\eps<1$ the last term in \eqref{eq:71-lb} may be estimated with the aid of \eqref{eq:A3} and \eqref{eq:A2} by
      \begin{align}
      \begin{aligned}
      \label{eq:h12-term}
          \log
          \left(c_*\max\left\{1,\min\left\{\frac{1}{\eps
              \int_{\T^2}|\nabla m_3| \dd x},
            \frac{1}{\eps}\right\}\right\}\right)\Int_{\T^2}|\nabla\overline{m}_3|\d2\\
            \le
            \log
            \left(c_*/\eps\right)\Int_{\T^2\times (0,1)} \Bigg(
            \eps|\nabla m|^2 + \frac{1}{\eps}
            (m_1^2 + m_2^2) \Bigg) \d3.
            \end{aligned}
      \end{align}
      For $Q$ and $(\ell_k, t_k, h_{{\rm ext},k})$ satisfying \eqref{eq:SUBcrit-E-regime}, we abbreviate
      \begin{align}\label{eq:regi}
       \eps_k:= \frac{1}{\ell_k\sqrt{Q-1}} \to 0 && \text{and} && g_k:=\frac{\ell_k}{\sqrt{Q-1}} h_{{\rm ext},k}(\ell_k \cdot) \to g,
      \end{align} and note that
      \begin{align}\label{eq:t_k_null}
          t_k^2 + \frac{t_k}{\sqrt{Q-1}} \lupref{eq:SUBcrit-E-regime}\longrightarrow 0.
      \end{align}
      Inserting \eqref{eq:SUBcrit-E-regime} and \eqref{eq:h12-term} -- \eqref{eq:t_k_null} into the lower bound \eqref{eq:71-lb}, we deduce that for any $\gamma > 0$ and sufficiently large $k \ge k_0(\gamma)$, we have
    \begin{align}
        \begin{aligned}
        J_k[m]&\ge \left(1- \frac{\lambda}{\lambda_c} - \gamma\right)\Int_{\T^2\times (0,1)} \left(\eps_k|\nabla m|^2 + \frac{1}{\eps_k} (m_1^2 + m_2^2) \right) \d3 \label{eq:pr31-lb-c}\\
        &\qquad + \frac{1}{2\eps_k t_k^2(Q-1)}\int_{\T^2 \times(0,1)}
        |\p_3m|^2 \d3 -2 \Int_{\T^2} \overline m_3
        g_k\d2.
        \end{aligned}
    \end{align}
Note that \eqref{eq:pr31-lb-c} for $2\gamma \le 1-\frac{\lambda}{\lambda_c}$ and sufficiently large $k$ implies
\begin{align}\label{eq:asdf}
\int_{\T^2 \times (0,1)} \left( m_1^2 + m_2^2
\right) \d3 \lesssim
\frac{\eps_k}{(\lambda_c - \lambda)}\left(J_k[m] +
2\|g_k\|_{L^1}\right).
\end{align}
Using Poincaré's inequality and \eqref{eq:pr31-lb-c} for $\gamma < 1-\frac{\lambda}{\lambda_c}$ again, we get
\begin{align}
\begin{aligned}\label{eq:I-lb-p7}
\int_{\T^2\times(0,1)} |m-\chi_{(0,1)}\overline m|^2 \d3 &\lesssim \int_{\T^2
    \times (0,1)} |\p_3 m|^2 \d3 \\
&\lupref{eq:pr31-lb-c}\lesssim \eps_k t_k^2(Q-1)\left(\limsup_{k
    \to \infty} J_k[m] + 2\|g_k\|_{L^1}\right).
\end{aligned}
\end{align}
Furthermore, applying \eqref{eq:A2} and \eqref{eq:A3} to \eqref{eq:pr31-lb-c} again implies the lower bound
\begin{align}\label{eq:I-lb-p88}
J[m]\ge  2\left(1 - \frac{\lambda}{\lambda_c} -
\gamma \right)\int_{\T^2} |\nabla' \overline m_3| \d2 - 2 \int_{\T^2}
\overline m_3 g_k\d2.
\end{align}

In order to prove compactness, let
$ m^{(k)} \in H^1(\T^2 \times (0,1); \S^2)$ with
$\limsup_{k \to \infty} J[m_k] < \infty$. Since $\lambda < \lambda_c$
and $g_k \to g $ in $L^1(\T^2)$, inequality \eqref{eq:I-lb-p88} implies a uniform
bound on $\overline{m}_3^{(k)}$ in $BV(\T_2)$. A standard compactness
argument implies that $\overline{ m}_3^{(k)} \to \overline m_3^{}$ in
$L^1(\T^2)$ for a subsequence (not relabelled) and some
$\overline m_3 \in BV(\T^2)$.  We will now show that in fact
$m^{(k)} \to \chi_{(0,1)}\overline m_3 e_3$ in
$L^1(\T^2\times(0,1);\R^3)$. Indeed, the triangle inequality yields
\begin{align}
\begin{aligned}\label{eq:I-lb-p10}
&\Int_{\T^2\times (0,1)}|m^{(k)} - \chi_{(0,1)}\overline m_3 e_3|
\d3 \le \Int_{\T^2\times (0,1)} \left( |{m}_1^{(k)}|^2 +
|{m}_2^{(k)}|^2 \right)^{1/2}
\d3\\
&\qquad + \Int_{\T^2\times (0,1)} |m_3^{(k)} - \chi_{(0,1)}\overline
m_3^{(k)}| \d3 + \Int_{\T^2}|\overline m^{(k)}_3 -
\overline m_3| \d3,
\end{aligned}
\end{align}
and we already know that the last term on the right hand side of \eqref{eq:I-lb-p10} vanishes. Furthermore, the first term vanishes due to \eqref{eq:asdf} and the second one due to \eqref{eq:I-lb-p7} and \eqref{eq:SUBcrit-E-regime}. This completes the proof of the compactness statement.

The liminf inequality is easily obtained from the lower bound \eqref{eq:I-lb-p88}. Indeed, let $ m^{(k)} \in H^1(\T^2 \times (0,1); \S^2)$ with $ m^{(k)} \to m$ in
$L^1(\T^2 \times (0,1))$. By Jensen's inequality, we also have
$\overline{m}^{(k)} \to \overline m$ in $L^1(\T^2)$. By lower
semicontinuity of the BV seminorm and since $\gamma$ was arbitrary, we obtain from \eqref{eq:I-lb-p88} in the limit
\begin{align*}
\liminf_{k \to \infty}J_k[m^{(k)}] \ge
\left(1-\frac{\lambda}{\lambda_c}\right) \Int_{\T^2}|\nabla'
\overline {m}_3|\d2 - 2 \Int_{\T^2} \overline{m}_3 g \d2. 
\end{align*}
\end{proof}

It remains to prove the upper bound for the $\Gamma$-convergence. As
it turns out, we may use the recovery sequence for the reduced energy
$F$ also for the full energy $E$ (up to thickening).

\begin{proof}[Construction of the recovery sequence in Theorem \ref{th:SUBcrit-E}]
  Let $\lambda \le \lambda_c$ and $\overline m \in BV(\T^2; \{\pm e_3\})$. Furthermore, let $\overline m_{\eps} \in H^1(\T^2; \S^2)$ denote the recovery sequence
  for $F_{\eps,\lambda}$ from Lemma \ref{le:constr}. With the notation \eqref{eq:regi} we set
  \begin{align}
      m^{(k)}(x',x_3) := \chi_{(0,1)}(x_3)\overline m_{\eps_k}(x') \qquad \text{for }(x',x_3) \in \T^2 \times (0,1)
  \end{align}
  and claim that
  \begin{align}
      \limsup_{k \to \infty} J_{k}[m^{(k)}] \le J_*[\overline m].
  \end{align}
 Inserting the abbreviation $\lambda_k:= \frac{t_k |\log(\eps_k)|}{4\sqrt{Q-1}}$ into the upper bound \eqref{eq:71-ub}, we obtain
\begin{align}\label{eq-Elt-ub}
  J_k[m^{(k)}] &\le \left(1+Ct_k^2\right)\Int_{\T^2} \left(\eps_k|\nabla  \overline m_{\eps_k}|^2 + \frac{1}{\eps_k} (\overline m_{\eps_k,1}^2 + \overline m_{\eps_k,2}^2) \right) \d2 \notag\\
  &\quad - \frac{2\lambda_k}{|\log \eps_k|} \Int_{\T^2}|\nabla^{1/2}\overline
  m_{{\eps_k},3}|^2 - 2 \Int_{\T^2} g_k\overline m_{{\eps_k},3}\d2\\
  & = 2F_{\eps_k, \lambda_k}[\overline
  m_{\eps_k}] -2 \Int_{\T^2} g_k\overline{m}_{\eps_k,3} \d2+ Ct_k^2 \Int_{\T^2} \left(\eps_k|\nabla  \overline m_{\eps_k}|^2 + \frac{1}{\eps_k} (\overline m_{\eps_k,1}^2 + \overline m_{\eps_k,2}^2) \right) \d2.\notag
\end{align}
We have shown in Lemma 5.3 that
\begin{align}
    \Int_{\T^2} \left(\eps_k|\nabla  \overline m_{\eps_k}|^2 + \frac{1}{\eps_k} (\overline m_{\eps_k,1}^2 + \overline m_{\eps_k,2}^2) \right) \d2 \to 2\int_{\T^2}|\nabla \overline m_3| \d2  < \infty.
\end{align}
Since \eqref{eq:SUBcrit-E-regime} implies $t_k \to 0$, $\lambda_k \to \lambda < \lambda_c$ and $g_k \to g$ in $L^1(\T^2)$, the claim follows upon applying Lemma \ref{le:constr} to \eqref{eq-Elt-ub}
\begin{align}
  \limsup_{k \to \infty}J_k[
  m^{(k)}]&\le 2 F_{*, \lambda}[\overline m]
  -2
  \Int_{\T^2} g\overline{m}_3 \d2.
\end{align}
\end{proof}

\subsection{Proof of Theorem \ref{th:SUPERcrit-E}}
    \label{sec:SUPERcrit-proof-E}
    \begin{proof}[Proof of Theorem \ref{th:SUPERcrit-E}]
        We begin with the proof of the lower bound for which we use \eqref{eq:71-lb} with $g=0$. For sufficiently small $\delta$, the regime \eqref{eq:re2} implies
        \begin{align}\label{eq:smallpar}
        Ct^2 + \frac{t}{\sqrt{Q-1}} \lupref{eq:re2}{\lesssim} C\delta^2 +
        \delta \lesssim \delta. 
        \end{align}
        Analogous to the argument that lead from \eqref{eq:lower-bound} to
        \eqref{eq:lower-bound1}, but now with \eqref{eq:smallpar} instead of
        \eqref{eq:delta-lambda}, we reduce \eqref{eq:71-lb} to the case
        \begin{align}
        \begin{aligned}\label{eq:J-lb-p6}
        J[m]&\ge \left(1-Ct^2 -
        \frac{t}{\sqrt{Q-1}}\right)\Int_{\T^2\times (0,1)} \left(
        \eps|\nabla
        m|^2 + \frac{1}{\eps} (m_1^2 + m_2^2) \right) \d3 \\
        &\qquad + \frac{1}{2\eps t^2(Q-1)} |\p_3m|^2 \d3 -\frac{t\log \left(c_*\frac{1}{\eps\int_{\T^2}|\nabla
                \overline m_3| \dd x}\right)}{\pi\sqrt{Q-1}}\,
        \Int_{\T^2}|\nabla \overline m_3|\d2.
        \end{aligned}
        \end{align}
        Abbreviating the energetic cost for $m$ to deviate from the optimal
        Bloch wall profile by
        \begin{align}\label{eq:Deps}
        D_\eps[m]:= \int_{\T^2 \times (0,1)} \left( \eps |\nabla m|^2 +
        \frac{1}{\eps}(1-m_3^2) \right) \d3 - 2\int_{\T^2} |\nabla
        \overline m_3| 
        \d3, 
        \end{align} 
        and inserting
        $\mu := \eps e^{2\pi t^{-1}\sqrt{Q-1}} \int_{\T^2} |\nabla
        \overline m_3| \dd x$
        and
        $c_{**}: = c_*e^{2\pi(1+C t \sqrt{Q-1})} \lupref{eq:re2} \sim 1$
        into the lower bound \eqref{eq:J-lb-p6} we get
        \begin{align}
        \begin{aligned}\label{eq:J-lb-p8}
        J[m]&\ge  \left(1-Ct^2 - \frac{t}{\sqrt{Q-1}}\right)D_\eps[m]+
        \frac{1}{2\eps t^2(Q-1)}\int_{\T^2 \times(0,1)} |\p_3m|^2 \d3\\ 
        &\qquad-\frac{\log \left(c_{**}/\mu\right)}{\pi}\, \mu\, t \ell e^{-2\pi t^{-1}\sqrt{Q-1}}.
        \end{aligned}
        \end{align}
        Minimizing in $\mu >0$ then yields the lower bound
        \begin{align}
        \begin{aligned}\label{eq:J-lb-p10}
        J[m]&\gtrsim -c_{**}t\ell e^{-2\pi t^{-1}\sqrt{Q-1}} \gtrsim -t\ell
        e^{-2\pi t^{-1}\sqrt{Q-1}}.
        \end{aligned}
        \end{align}
        It remains to construct a sequence that achieves the optimal
        scaling. Let $m_{\eps, N}$ denote the function constructed in Lemma \ref{le:F-SUPERcrit-ub} and define $m_{\eps, N} := \chi_{(0,1)}\overline m_{\eps,N}$. We insert \eqref{eq:construction} and \eqref{eq:construction_2} into \eqref{eq:71-ub} and use that \eqref{eq:re2} implies
        $t^2 \lesssim \frac{t}{\sqrt{Q-1}}$ to deduce
        \begin{align}\label{eq:construction-E}
        J[m_{\eps,N}] \le 4N\left(1+Ct^2 - \frac{t\log(\tfrac{c}{2 \eps
                N})}{2\pi\sqrt{Q-1}}\right)\lupref{eq:re2} \le 4N\left(1 - \frac{t\log(\tfrac{\tilde c}{2 \eps
                N})}{2\pi\sqrt{Q-1}}\right)
        \end{align}
        for some universal $\tilde c > 0$. Optimizing in $N$ leads to
        \begin{align}\label{eq:N-E}
	        N := 2\left\lfloor \ell \sqrt{Q-1}\frac{e^{-2\pi {t^{-1}} \sqrt{Q-1}}}{K}\right\rfloor,
        \end{align}
        which satisfies $N \ge 2$ due to \eqref{eq:re2} and is hence admissible. Inserting \eqref{eq:N-E} into \eqref{eq:construction-E}, and taking $K\ge \frac{8}{\tilde c}$, we conclude that the function $m_{\eps,N}$
        indeed achieves the optimal scaling
        \begin{align*}
        \begin{aligned}
         J[m_{\eps,N}]\lesssim - t\ell e^{-2\pi {t^{-1}}
            \sqrt{Q-1}}.
        \end{aligned}
        \end{align*}
    \end{proof}
    
    \subsection{Proof of Proposition \ref{prop:properties}}
    \begin{proof}[Proof of Proposition \ref{prop:properties}]
        Let $m$ satisfy \eqref{eq:t-opt-gamma}. Then \eqref{eq:smallpar} and \eqref{eq:J-lb-p8} imply $\mu \sim 1$
        and hence \eqref{eq:wl}
        \begin{align*}
        \Int_{\T^2} |\nabla \overline m_3| \d2 \sim \ell \sqrt{Q-1}
        e^{-2\pi {t^{-1}}\sqrt{Q-1}},
        \end{align*}
        where here and throughout the rest of this proof, the constants associated with $\lesssim, \gtrsim$ and $\sim$ may depend on $\gamma$.
        In turn, inserting \eqref{eq:t-opt-gamma}, \eqref{eq:wl} and \eqref{eq:smallpar} into
        \eqref{eq:J-lb-p8} implies \eqref{eq:bw}
        \begin{align*}
        D_\eps[m] \lupref{eq:J-lb-p8}\lesssim
        \frac{t}{\sqrt{Q-1}}\Int_{\T^2} |\nabla \overline m_3| \d2. 
        \end{align*}
        Furthermore, Poincaré's inequality, \eqref{eq:J-lb-p8}, \eqref{eq:t-opt-gamma} and $\mu \sim 1$ yield \eqref{eq:t2-i1}
        \begin{align*}
        \begin{aligned}
        \int_{\T^2 \times (0,1)} |m - \chi_{(0,1)}\overline m|^2 \d3
        &\lesssim \int_{\T^2 \times (0,1)} |\p_3m|^2 \d3 \lupref{eq:J-lb-p8}\lesssim
	    t^3 \sqrt{Q-1}\,e^{-2\pi {t^{-1}}\sqrt{Q-1}}.
        \end{aligned}
        \end{align*}
        Finally, we deduce \eqref{eq:oop} from \eqref{eq:Deps}, \eqref{eq:wl} and \eqref{eq:bw}
        \begin{align}
        \begin{aligned}
        \int_{\T^2 \times (0,1)} \left( m_1^2 + m_2^2 \right) \d3
        &\lupref{eq:Deps}\lesssim \eps \left(\int_{\T^2} |\nabla
        \overline m_3| \d2 + D_\eps[m]\right) \lesssim e^{-2\pi
            {t^{-1}}\sqrt{Q-1}},
        \end{aligned}
        \end{align}
        which completes the proof.
    \end{proof}

\subsection{Proof of Theorem \ref{th:CRIT-E}}
\begin{proof}[Proof of Theorem \ref{th:CRIT-E}]
    The proof is analogous to the proof of Theorem \ref{th:crit-F}.
\end{proof}

\paragraph{Acknowledgements}
The work of CBM was supported, in part, by NSF via grants DMS-1313687
and DMS-1614948. FN thanks the New Jersey Institute of Technology for
its hospitality during a visit in Newark and the Heidelberg Graduate
School of Mathematical and Computational Methods for the Sciences for
financial support. Furthermore, the authors thank Christof Melcher for
stimulating questions that led to Remark \ref{eq:nat-scales} and
Pierre Bousquet for pointing us to \cite{hang2001topology}.

\begin{appendices}
\numberwithin{equation}{section}
\section{}
    \label{sec:appendix}

    We give a proof for the continuity of
    $\eps \mapsto \lambda_c(\eps)$, the critical value of
    $\lambda$. Furthermore, we record a few well-known results that
    are used in the paper. For the convenience of the reader, we also
    give the proofs.

    For $0 < \eps < 1$, we define the critical value of $\lambda$
    where $\min F_{\eps,\lambda}$ becomes negative as
\begin{align}\label{eq:def}
\lambda_c(\eps):= \inf\{\lambda: \, \min F_{\eps,\lambda}<0\}.
\end{align}
\begin{lem}\label{le:cont-lambda}
    The function $\lambda_c:(0,1) \to \R$ (see \eqref{eq:def}) is Lipshitz-continuous on compact subsets of $(0,1)$.
\end{lem}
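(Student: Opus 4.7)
The plan is to reformulate $\lambda_c(\eps)$ as an infimum of Rayleigh-type quotients, estimate how the quotient depends on $\eps$ for each admissible $m$, and then pass to the infimum. Writing
\[
G_\eps[m]:=\int_{\T^2}\left(\tfrac{\eps}{2}|\nabla m|^2+\tfrac{1}{2\eps}(1-m_3^2)\right)\dx2,\qquad H[u]:=\int_{\T^2}|\nabla^{1/2}u|^2\dx2,
\]
and using the fact that $F_{\eps,\lambda}$ is affine in $\lambda$ with non-positive slope, the inequality $\min F_{\eps,\lambda}<0$ is equivalent to the existence of $m\in H^1(\T^2;\S^2)$ with non-constant $m_3$ (so that $H[m_3]>0$) satisfying $G_\eps[m]<(\lambda/|\log\eps|)H[m_3]$. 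Hence
\[
\lambda_c(\eps)=|\log\eps|\cdot\inf_{m}R_\eps[m],\qquad R_\eps[m]:=\frac{G_\eps[m]}{H[m_3]},
\]
with the infimum taken over all such $m$.

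The first step is a uniform a priori bound: for any $[a,b]\subset(0,1)$ there is $M=M(a,b)<\infty$ with $\inf_m R_\eps[m]\le M$ for every $\eps\in[a,b]$. This follows by testing against a single smooth map with non-constant third component, for instance $m_*(x_1,x_2)=(\sin 2\pi x_1,0,\cos 2\pi x_1)$: the map $\eps\mapsto G_\eps[m_*]$ is continuous on $(0,\infty)$ and $H[(m_*)_3]$ is a fixed positive constant, so $M:=\sup_{\eps\in[a,b]}R_\eps[m_*]<\infty$.

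The main computation is a Lipschitz estimate for $R_\eps[m]$ in $\eps$ with constant proportional to $R_\eps[m]$ itself. Expanding
\[
G_{\eps'}[m]-G_\eps[m]=(\eps'-\eps)\left[\tfrac12\int_{\T^2}|\nabla m|^2-\tfrac{1}{2\eps\eps'}\int_{\T^2}(1-m_3^2)\right]
\]
and using the tautological bounds $\int|\nabla m|^2\le 2G_\eps[m]/\eps$ and $\int(1-m_3^2)\le 2\eps G_\eps[m]$, both immediate from the definition of $G_\eps$, then dividing by $H[m_3]$, one obtains
\[
|R_{\eps'}[m]-R_\eps[m]|\le |\eps-\eps'|\bigl(\tfrac{1}{\eps}+\tfrac{1}{\eps'}\bigr)R_\eps[m]\le\tfrac{2}{a}|\eps-\eps'|\,R_\eps[m]
\]
for $\eps,\eps'\in[a,b]$. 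Applying this to a minimizing sequence for $R_\eps$ and passing to the infimum gives $\inf R_{\eps'}\le(1+\tfrac{2}{a}|\eps-\eps'|)\inf R_\eps$, and the symmetric inequality yields $|\inf R_{\eps'}-\inf R_\eps|\le\tfrac{2M}{a}|\eps-\eps'|$.

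The proof is concluded by combining this with the elementary bound $\bigl||\log\eps|-|\log\eps'|\bigr|\le|\eps-\eps'|/a$ valid on $[a,b]$:
\[
|\lambda_c(\eps)-\lambda_c(\eps')|\le|\log\eps|\,|\inf R_\eps-\inf R_{\eps'}|+\bigl||\log\eps|-|\log\eps'|\bigr|\,\inf R_{\eps'}\le\tfrac{M(2|\log a|+1)}{a}|\eps-\eps'|.
\]
The argument is entirely elementary and presents no real obstacle; the only substantive ingredient beyond the Rayleigh-quotient reformulation is the a priori upper bound $M$, which is obtained from a single explicit test function.
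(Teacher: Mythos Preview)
Your proof is correct and shares its starting point with the paper's: both rewrite $\lambda_c(\eps)=\inf_m \lambda_{c,m}(\eps)$ with $\lambda_{c,m}(\eps)=|\log\eps|\,G_\eps[m]/H[m_3]$, and both obtain a Lipschitz bound on each $\lambda_{c,m}$ (equivalently on $R_\eps[m]$) that is proportional to the function itself. The execution differs, however. The paper computes $\frac{d}{d\eps}\lambda_{c,m}$ via the implicit function theorem, then passes to a countable dense subfamily by separability of $H^1$, truncates at the a~priori bound $M$, and invokes the compact embedding $C^{0,1}\hookrightarrow C^0$ to conclude that the pointwise limit of the equi-Lipschitz minima is Lipschitz. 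You bypass all of this: you estimate the increment $R_{\eps'}[m]-R_\eps[m]$ directly from the explicit $\eps$-dependence of $G_\eps$, and then exploit the multiplicative form $|R_{\eps'}[m]-R_\eps[m]|\le C|\eps-\eps'|R_\eps[m]$ to take the infimum over $m$ in one line, avoiding both the implicit function theorem and the separability/compactness detour. Your route is shorter and more elementary; the paper's route makes more visibly the general principle that an infimum of an equi-Lipschitz family is Lipschitz, at the cost of some unnecessary machinery (the multiplicative bound already suffices to pass to the infimum directly, as you show).
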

\begin{proof}
    The main idea is to express $\lambda_c$ as the infimum over $\lambda_{c,m}$, where $m$ is held fixed (see \eqref{eq:def2}) and to deduce regularity of $\lambda_c$ from the regularity of $\lambda_{c,m}$. We define
    \begin{align}
    X:= \{m \in H^1(\T^2;\S^2):\, m \text{ is not constant}\}
    \end{align} and introduce, for any $m \in X$, the function
    \begin{align}\label{eq:def2}
    \lambda_{c,m}: (0,1) \to \R, \qquad \eps \mapsto \lambda_{c,m}(\eps):= \inf\{\lambda: \, F_{\eps,\lambda}[m]<0\}.
    \end{align}
    Note that $F_{\eps,\lambda}[m] \ge 0$ if $m$ is constant and that $\lambda \mapsto F_{\eps,\lambda}[m]$ is strictly monotone (for $\eps$ and $m \in X$ fixed). Hence, we may rewrite
    \begin{align}
    \begin{aligned}\label{eq:la}
    \lambda_c(\eps) &= \inf\{\lambda: \, \exists m \in X \text{ s.t. } F_{\eps,\lambda}[m]<0\}\\
    &=\inf\{\lambda: \, \exists m \in X \text{ s.t. } \lambda > \lambda_{c,m}(\eps)\} =\inf_{m \in X} \lambda_{c,m}(\eps).
    \end{aligned}
    \end{align}
    \textit{Step 1:} Regularity of $\lambda_{c,m}$. We claim that
    \begin{align}\label{eq:est}
    \left| \frac{d}{d \eps}\lambda_{c,m}(\eps)\right| \le \left(1 + \frac{1}{|\log \eps|}\right)\frac{\lambda_{c,m}(\eps)}{\eps} \qquad \text{for all }m \in X.
    \end{align}
    To prove \eqref{eq:est}, fix $m \in X$ and abbreviate
    \begin{align*}
      a = \int_{\T^2} |\nabla m|^2 \dd x, \qquad b:= \int_{\T^2} (1-m_3^2) \dd x \qquad c:= \int_{\T^2} |\nabla^{1/2}m_3|^2 \dd x,
    \end{align*}
    so that $F_{\eps,\lambda}[m] = \frac{\eps}{2}a + \frac{b}{2\eps} - \frac{\lambda}{|\log \eps|}c$ with partial derivatives
    \begin{align*}
    \p_\eps F_{\eps,\lambda}[m] = \frac{a}{2} - \frac{b}{2\eps^2} - \frac{\lambda c}{\eps|\log \eps|^2} && \text{and} && \p_\lambda F_{\eps,\lambda}[m] = - \frac{c}{|\log \eps|}.
    \end{align*}
    By continuity of $(\eps,\lambda) \mapsto F_{\eps,\lambda}[m]$ and strict monotonicity in $\lambda$, we deduce from \eqref{eq:def2} that $\lambda_{c,m}$ satisfies $F_{\eps,\lambda_{c,m}(\eps)}[m]=0$ for all $\eps \in (0,1)$ and, furthermore, that it is the only function with this property. Then the implicit function theorem asserts that $\lambda_{c,m}$ is $C^1((0,1))$ with
    \begin{align}
    \begin{aligned}\label{eq:A8}
    \frac{d}{d \eps}\lambda_{c,m}(\eps) &= - \left(\p_\lambda F_{\eps,\lambda}[m] \right)^{-1}\p_\eps F_{\eps,\lambda}[m]\\
    &= \frac{|\log \eps|}{c}\left(\frac{a}{2} - \frac{b}{2\eps^2} - \frac{\lambda c}{\eps|\log \eps|^2}\right).
    \end{aligned}
    \end{align}
    Inserting the identity $F_{\eps,\lambda_{c,m}(\eps)}[m] = \frac{\eps}{2}a + \frac{b}{2\eps} - \frac{\lambda_{c,m}(\eps)}{|\log \eps|}c = 0$ into \eqref{eq:A8}, we obtain the estimate
    \begin{align}
    \left| \frac{d}{d \eps}\lambda_{c,m}(\eps)\right| \le  \frac{|\log \eps|}{\eps}\left( \frac{\frac{\eps a}{2} + \frac{b}{2 \eps}}{c} \right) +  \frac{\lambda_{c,m}(\eps)}{\eps|\log \eps|} \le \left(1 + \frac{1}{|\log \eps|}\right)\frac{\lambda_{c,m}(\eps)}{\eps}
    \end{align}
    which completes the proof of \eqref{eq:est}.
    
    \textit{Step 2: Regularity of $\lambda_c$.}  The metric space
    $(X,\|\cdot\|_{H^1})$ is separable as a subset of the separable
    metric space $H^1(\T^2; \R^3)$ and hence there exists a dense
    countable subset $\{m_n: \, n \in \N\} \subset X$. Let
    $\delta \in (0,1/2)$ and define
    $M:= \sup_{\eps \in [\delta,1-\delta]}|\lambda_{c,m_1}(\eps)|
    {< +\infty}$. Then the functions
    \begin{align}
    g_n: [\delta, 1-\delta] \to \R, \qquad \eps \mapsto g_n(\eps) = \min \{\lambda_{c,m_n}(\eps),M\}
    \end{align}
    are Lipschitz-continuous for all $n \in \N$. Furthermore, by \eqref{eq:est}, their Lipschitz-constant is bounded by $ \delta^{-1}(1 + \frac{1}{|\log \delta|})M$ (independent of $n \in \N$). Define the sequence of functions $f_k:= \min_{1 \le n \le k}g_n$ and observe that
    \begin{itemize}
        \item[(i)] $\| f_k \|_{C^0([\delta, 1-\delta])} \le M $ for all $k \in \N$,
        \item[(ii)] $f_k$ is Lipschitz continuous with Lipschitz
          constant bounded by
          $\delta^{-1}(1 + \frac{1}{|\log \delta|})M$,
        \item[(iii)] $f_k(\eps) \to \lambda_c(\eps)$ as $k \to \infty$
          for all $\eps \in [\delta,1-\delta]$.
    \end{itemize}
    The last point follows from \eqref{eq:la}, the density of
    $\{m_n: \, n \in \N\} \subset X$ and continuity of
    $m \mapsto F_{\eps,\lambda}[m]$.  Now the compact embedding
    $C^{0,1}([\delta,1-\delta]) \hookrightarrow
    C^0([\delta,1-\delta])$ implies that $f_k {\to} f$
    {uniformly} for some
    $f \in C^{0,1}([\delta,1-\delta])$ with Lipschitz constant bounded
    by $\delta^{-1}(1 + \frac{1}{|\log \delta|})M$. By uniqueness of
    the limit we conclude that $f=\lambda_c$, which completes the
    proof.
\end{proof}
It is well-known that if $m \in H^1$ takes values in $\S^2$, this
implies certain estimates for the gradient $\nabla m$ {(see, e.g.,
  \cite{kohn07icm})}. Since these estimates are used frequently
throughout our paper, we record them in the following Lemma.

\begin{lem}
  Let $\Omega \subset \R^n$ be open and $m \in H^1(\Omega,\S^2)$. Then
  for every $\eps > 0$ we have 
    \begin{align}
        \label{eq:A1}
      &(i)
      &\frac{|\nabla m_3|^2}{1-m_3^2} 
      &\le |\nabla m|^2  
      &&\text{for
         a.e. }x
         \in \Omega
         \text{
         with }
         |m_3(x)|<1,
      \\ 
      &(ii)&\quad |\nabla m_3| 
      &\le {\eps \over 2} |\nabla m|^2 +
        {1-m_3^2 \over 2 \eps} \label{eq:A2}  
      &&\text{for
         a.e. }x
         \in
         \Omega. 
    \end{align}
\end{lem}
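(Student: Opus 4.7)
The plan is to derive both inequalities directly from the constraint $|m|=1$ by differentiating $|m|^2=1$ and then applying Cauchy–Schwarz (for (i)) and Young's inequality (for (ii)). Since $m\in H^1(\Omega;\S^2)$, the chain rule for Sobolev functions yields $m_1\nabla m_1+m_2\nabla m_2+m_3\nabla m_3=0$ almost everywhere, which is the key algebraic identity driving both estimates.

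For (i), I would rearrange the above identity as $m_3\nabla m_3=-m_1\nabla m_1-m_2\nabla m_2$, take the squared Euclidean norm of the vector equality component-by-component in $\R^n$, and apply Cauchy–Schwarz in the index $i\in\{1,2\}$:
\begin{align*}
m_3^2|\nabla m_3|^2
=\bigl|m_1\nabla m_1+m_2\nabla m_2\bigr|^2
\le (m_1^2+m_2^2)\bigl(|\nabla m_1|^2+|\nabla m_2|^2\bigr)
=(1-m_3^2)\bigl(|\nabla m|^2-|\nabla m_3|^2\bigr).
\end{align*}
Adding $(1-m_3^2)|\nabla m_3|^2$ to both sides collapses the left side to $|\nabla m_3|^2$ and gives $|\nabla m_3|^2\le(1-m_3^2)|\nabla m|^2$, from which (i) follows by dividing on the set $\{|m_3|<1\}$.

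For (ii), I would first note that on the set $\{|m_3|=1\}$ the standard fact $\nabla m_3=0$ a.e.\ on level sets of a Sobolev function makes the inequality trivial (left side vanishes, right side is non-negative). Off this set, the estimate from (i) gives $|\nabla m_3|\le\sqrt{1-m_3^2}\,|\nabla m|$, and Young's inequality in the form $ab\le\tfrac{\eps}{2}a^2+\tfrac{1}{2\eps}b^2$ applied with $a=|\nabla m|$ and $b=\sqrt{1-m_3^2}$ yields (ii). There is no real obstacle here; the only point requiring care is justifying the chain rule $\nabla(|m|^2)=2\sum_i m_i\nabla m_i=0$ almost everywhere for $H^1$ maps into $\S^2$, which is standard (e.g.\ Stampacchia's theorem, or direct approximation by smooth functions combined with the pointwise constraint).
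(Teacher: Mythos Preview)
Your proposal is correct and follows essentially the same argument as the paper: differentiate the constraint $|m|^2=1$, apply Cauchy--Schwarz to obtain $|\nabla m_3|^2\le(1-m_3^2)|\nabla m|^2$, and then deduce (ii) via Young's inequality together with $\nabla m_3=0$ a.e.\ on $\{|m_3|=1\}$. The only cosmetic difference is that the paper applies Young's inequality to $|\nabla m_3|=\frac{|\nabla m_3|}{\sqrt{1-m_3^2}}\cdot\sqrt{1-m_3^2}$ and then invokes (i), whereas you first invoke (i) and then apply Young to $|\nabla m|\cdot\sqrt{1-m_3^2}$; these are equivalent.
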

\begin{proof}
  To prove $(i)$, we apply the weak chain rule to the constraint $|m|^2 = 1$, which yields
\begin{align*}
  - m_3 \nabla m_3 = m_1 \nabla m_1 + m_2 \nabla m_2
\end{align*}
a.e. in $\Omega$.  After squaring both sides and applying the
$n$-dimensional Cauchy-Schwarz inequality, we obtain
\begin{align*}
  m_3^2 |\nabla m_3|^2 
  \le\left(m_1^2 + m_2^2\right)(|\nabla m_1|^2 +
    |\nabla m_2|^2). 
\end{align*}
Finally we add $\left(m_1^2+m_2^2\right) |\nabla m_3|^2$ to both
sides. Since $|m|^2=1$, this yields
\begin{align*}
  |\nabla m_3|^2 \le (1-m_3^2)|\nabla m|^2,
\end{align*}
and hence proves \eqref{eq:A1}.

We turn to the proof of $(ii)$. Since $\nabla m_3=0$ almost everywhere
on the set $\{x \in \Omega:\, |m_3(x)|=1\}$, it remains to prove
\eqref{eq:A2} on $\{x \in \Omega, |m_3(x)|<1\}$. This follows from
\eqref{eq:A1} upon an application of Young's inequality
\begin{align*}
  \quad 2|\nabla m_3| \le \frac{\eps |\nabla m_3|^2}{1-m_3^2} +
  {1-m_3^2 \over \eps}  \lupref{eq:A1}\le \eps |\nabla m|^2
  + {1-m_3^2 \over \eps},  
\end{align*}
which concluded the proof.
\end{proof}

In the following Lemma, we record a consequence of Jensen's inequality
for the gradients of $e_3$-averages.

\begin{lem}\label{le:jensen_gradient}
  For every $p \in [1,\infty)$ and every
  $f \in W^{1,p}(\T^2\times (0,1))$, we have
    \begin{align}\label{eq:A3}
      \Int_{\T^2}\left|\nabla'\int_0^1f(x',x_3)\d[x_3]{1}\right|^p
      \d[x']{2} \le \Int_{\T^2\times (0,1)}\left|\nabla'f \right|^p
      \d{3}. 
    \end{align}
\end{lem}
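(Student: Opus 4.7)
The plan is to reduce the statement to a straightforward application of Jensen's inequality combined with an interchange of the in-plane gradient and the $e_3$-average. The main step is to justify that differentiation commutes with integration in $x_3$, i.e.\ that
\begin{align*}
  \nabla' \!\int_0^1 f(x', x_3)\, dx_3 = \int_0^1 \nabla' f(x', x_3)\, dx_3
\end{align*}
in the weak sense on $\T^2$, after which the estimate follows from convexity of $|\cdot|^p$ and Fubini's theorem.

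First, I would assume $f \in C^\infty(\T^2 \times (0,1))$; the general case then follows by density, since smooth functions are dense in $W^{1,p}(\T^2 \times (0,1))$ for $p \in [1, \infty)$, and both sides of \eqref{eq:A3} are continuous with respect to the $W^{1,p}$-norm (the right-hand side trivially, the left-hand side via the trace/average being continuous from $W^{1,p}(\T^2 \times (0,1))$ into $W^{1,p}(\T^2)$, which is precisely what the inequality itself will establish in the smooth case). For smooth $f$, classical differentiation under the integral sign gives the interchange of $\nabla'$ and $\int_0^1 dx_3$ directly.

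Next, I would apply Jensen's inequality pointwise in $x' \in \T^2$ to the convex function $v \mapsto |v|^p$ on $\R^2$:
\begin{align*}
  \left| \int_0^1 \nabla' f(x', x_3)\, dx_3 \right|^p \le \int_0^1 \left| \nabla' f(x', x_3) \right|^p dx_3.
\end{align*}
Integrating this inequality over $x' \in \T^2$ and applying Fubini's theorem on the right-hand side yields the claim \eqref{eq:A3} for smooth $f$.

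The only subtlety is the density argument for general $f \in W^{1,p}(\T^2 \times (0,1))$. One picks a sequence $f_n \in C^\infty$ with $f_n \to f$ in $W^{1,p}$; then $\overline{f_n} \to \overline{f}$ in $L^p(\T^2)$ (by Jensen applied to $|f_n - f|^p$), and the uniform bound $\|\nabla' \overline{f_n}\|_{L^p(\T^2)}^p \le \|\nabla' f_n\|_{L^p(\T^2 \times (0,1))}^p$ plus weak compactness in $L^p$ (for $p > 1$) identifies the weak gradient of $\overline f$ as the $L^p$-weak limit, from which \eqref{eq:A3} passes to the limit by lower semicontinuity of the norm. For $p = 1$ one argues analogously using lower semicontinuity of the $BV$-norm, or more directly by testing against smooth vector fields on $\T^2$ and using Fubini to reduce to the smooth case. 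I do not expect any genuine obstacle; the whole argument is a standard interchange-plus-Jensen computation.
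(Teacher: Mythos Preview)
Your proposal is correct and follows essentially the same route as the paper: reduce to smooth $f$, interchange $\nabla'$ with the $x_3$-integral, apply Jensen's inequality pointwise to the convex map $v\mapsto|v|^p$, integrate over $\T^2$, and then pass to general $f\in W^{1,p}$ by density and lower semicontinuity of the norm under weak convergence of the averages. The paper's argument is slightly terser about the approximation step, but the structure is identical.
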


\begin{proof}
  Assume for a moment that $f \in C^\infty(\T^2\times (0,1))$. Since
  $|\cdot|^p:\R^2 \to \R$ (the $p$-th power of the euclidean norm) is
  a convex function, an application of Jensen's inequality (for
  two-dimensions) then yields
    \begin{align}\label{eq:jensen_simple}
      \left|\int_0^1 \nabla' f(x',x_3)\d[x_3]{1}\right|^p \d[x']{2}
      \le \Int_0^1\left|\nabla'f(x',x_3) \right|^p \d[x_3]1 \qquad
      \text{for every }x' \in \T^2. 
    \end{align}
    For $f \in C^\infty(\T^2\times (0,1))$, we can change the order of
    integration and differentiation, so that \eqref{eq:A3} follows
    from \eqref{eq:jensen_simple} after integration over $\T^2$
    \begin{align}
    \begin{aligned}\label{eq:jensen_gradient_smooth}
      \Int_{\T^2}\left|\nabla'\int_0^1f(x',x_3)\d[x_3]{1}\right|^p
      \d[x']{2}
      &= \Int_{\T^2}\left|\int_0^1 \nabla'
        f(x',x_3)\d[x_3]{1}\right|^p \d[x']{2}\\ 
      &\lupref{eq:jensen_simple}\le
      \Int_{\T^2}\Int_0^1\left|\nabla'f(x',x_3)\right|^p
      \d[x_3]{1}\d[x']{2}.
    \end{aligned}
    \end{align}
    Finally, \eqref{eq:jensen_gradient_smooth} extends to any $f\in W^{1,p}(\T^2\times(0,1))$ by a standard approximation argument using lower semi-continuity of the $W^{1,p}(\T^2)$ norm with respect to weak
        convergence of the $e_3$-averages.
\end{proof}

The next Lemma relates the real space formulation of the homogeneous
$H^{1/2}$-norm to its Fourier representation.
\begin{lem}\label{le-H12fourier}
    For every smooth function $f:\T_\ell^2 \to \R$, the following holds
    \begin{align}\label{eq:A4}
      \Int_{\T^2_\ell} |\nabla^{1/2} f|^2 \d2
      := \frac{1}{\ell^2}\sum_{k\in \frac{2\pi}{\ell}\Z^2} |k||\widehat f_k|^2 
      = \frac1{4\pi}\Int_{\T^2_\ell} \Int_{\R^2} \frac{|f(x) -
      f(y)|^2}{|x-y|^3}\dx2\dy2.  
    \end{align}
\end{lem}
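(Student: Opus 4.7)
The plan is to reduce the identity to a single explicit integral computation by expanding $f$ in its Fourier series, applying Parseval's identity in the $x$-variable, and then evaluating the remaining radial integral via scaling and polar coordinates.

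First, I would write $f(x+y) - f(x) = \frac{1}{\ell^2}\sum_{k} e^{ik\cdot x}(e^{ik\cdot y}-1)\widehat{f}_k$, which identifies the Fourier coefficients of $x\mapsto f(x+y)-f(x)$ as $(e^{ik\cdot y}-1)\widehat{f}_k$. Parseval's identity then gives
\begin{align*}
\int_{\T^2_\ell}|f(x+y)-f(x)|^2 \, dx = \frac{1}{\ell^2}\sum_{k \in \frac{2\pi}{\ell}\Z^2} |e^{ik\cdot y}-1|^2 \, |\widehat{f}_k|^2.
\end{align*}
Since $f$ is smooth, its Fourier coefficients decay rapidly, and the integrand $|f(x+y)-f(x)|^2/|y|^3$ is integrable on $\T_\ell^2\times\R^2$ (behaving like $C/|y|$ near $y=0$ by the mean value theorem and like $C/|y|^3$ at infinity by boundedness of $f$). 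By Tonelli's theorem, exchanging the $y$-integral with the sum is justified, so the identity reduces to showing
\begin{align*}
\int_{\R^2}\frac{|e^{ik\cdot y}-1|^2}{|y|^3} \, dy = 4\pi|k| \qquad\text{for all } k\in\tfrac{2\pi}{\ell}\Z^2.
\end{align*}

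The case $k=0$ is trivial. For $k\neq 0$, by rotational invariance of $|y|$ we may rotate so that $k$ is aligned with $e_1$, and then substitute $z = |k|y$ to reduce to $|k|$ times the universal constant $C_0 := \int_{\R^2}\frac{|e^{iz_1}-1|^2}{|z|^3}\,dz$. Using the identity $|e^{iz_1}-1|^2 = 4\sin^2(z_1/2)$ and polar coordinates $z=(r\cos\phi, r\sin\phi)$, together with the classical formula $\int_0^\infty \sin^2(ar)/r^2\,dr = \pi|a|/2$, I would compute
\begin{align*}
C_0 = \int_0^{2\pi}\int_0^\infty \frac{4\sin^2(r\cos\phi/2)}{r^2}\,dr\,d\phi = \int_0^{2\pi} \pi|\cos\phi| \, d\phi = 4\pi.
\end{align*}
Combining this with the Parseval reduction and dividing by $4\pi$ yields the claimed equality.

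No step is especially hard; the only technicality is justifying the exchange of sum and integral, which follows from Tonelli since the integrand is non-negative, combined with the fact that both sides are finite for smooth $f$. The computation of $C_0$ is the crucial piece that fixes the numerical constant $4\pi$ in the real-space representation.
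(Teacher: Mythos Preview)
Your proposal is correct and follows essentially the same route as the paper: both proofs apply Parseval in the torus variable to reduce to the evaluation of $\int_{\R^2}|e^{ik\cdot y}-1|^2/|y|^3\,dy$, then use rotational symmetry, scaling, the identity $|e^{it}-1|^2=4\sin^2(t/2)$, and polar coordinates together with $\int_0^\infty \sin^2\rho/\rho^2\,d\rho=\pi/2$ to obtain the constant $4\pi$. The only cosmetic differences are the order of presentation (the paper computes the constant first) and your slightly more explicit justification of the interchange via Tonelli.
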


\begin{proof}
  First we prove the identity
    \begin{align}\label{eq-fi}
      \Int_{\R^2}|e^{ik\cdot x} -1|^2 \frac{1}{|x|^3}\dx2 = 4\pi |k|
      \qquad \text{for every }k \in \tfrac{2\pi}{\ell} \Z^2. 
    \end{align}
    By scaling and rotational symmetry, we have
    \begin{align}\label{eq-fi-1}
      \Int_{\R^2}|e^{ik\cdot x} -1|^2 \frac{1}{|x|^3}\dx2 =
      |k|\Int_{\R^2}|e^{ix_1} -1|^2 \frac{1}{|x|^3}\dx2. 
    \end{align}
    We evaluate the last integral in polar coordinates. On
    substituting $\rho = \frac{r \cos \theta}{2}$, we obtain 
    \begin{align}
    \label{eq-fi-2}
    \Int_{\R^2}|e^{ix_1} -1|^2 \frac{1}{|x|^3}\dx2 &=
    \Int_{\R^2}|e^{\frac{ix_1}{2}} - e^{-\frac{ix_1}{2}}|^2
    \frac{1}{|x|^3}\dx2 =\Int_{0}^{2\pi} \Int_{0}^{\infty} 4
    \sin^2\left(\frac{r\cos
        \theta}{2}\right) \frac{1}{r^3} r \d[\theta]{1} \d[r]{1} \notag \\
    &= 2\Int_{0}^{2\pi} |\cos \theta| \d[\theta]{1} \Int_{0}^{\infty}
    \frac{\sin^2 \rho}{\rho^2} \d[\rho]{1} = 4\pi. 
    \end{align}
    Together, \eqref{eq-fi-1} and \eqref{eq-fi-2} prove \eqref{eq-fi}.\\
    \\
    With \eqref{eq-fi} at hand, we will now prove \eqref{eq:A4}. By a
    variable transformation and Fubini's Theorem, we obtain
    \begin{align*}
    &\Int_{\T^2_\ell} \Int_{\R^2} \frac{|f(x) - f(y)|^2}{|x-y|^3}\dx2 \dy2
    =\Int_{\R^2} \Int_{\T^2_\ell} |f(z+y) - f(y)|^2 \dy2\frac{1}{|z|^3}\dz2.
    \end{align*}
    Rewriting the inner integral in Fourier space and using Fubini's
    Theorem again yields
    \begin{align*}
      &\Int_{\R^2} \Int_{\T^2_\ell} |f(z+y) - f(y)|^2 \dy2
        \frac{1}{|z|^3}\dz2 = \frac{1}{\ell^2}\Int_{\R^2} \sum_{k\in
        \frac{2\pi}{\ell}\Z^2} |e^{-ik\cdot z} -1|^2 |\widehat
        f_k|^2\frac{1}{|z|^3}\dz2\\ 
      &=\frac{1}{\ell^2}\sum_{k\in \frac{2\pi}{\ell}\Z^2} |\widehat
        f_k|^2 \Int_{\R^2}|e^{-ik\cdot z} -1|^2 \frac{1}{|z'|^3}\dz2
        \lupref{eq-fi}= \frac{4\pi}{\ell^2}\sum_{k\in
        \frac{2\pi}{\ell}\Z^2}  |k||\widehat f_k|^2,
    \end{align*}
    which gives the desired formula.
\end{proof}

\end{appendices}

\bibliographystyle{siam}
\bibliography{bib}

\end{document}